\documentclass[a4paper]{amsart}
\usepackage{mathmarco}

\usepackage[maxalphanames=99,url=false,maxbibnames=99,giveninits=true,style=alphabetic]{biblatex}

\AtEveryBibitem{
  \ifentrytype{article}{\clearfield{doi}\clearfield{issn}\clearfield{isbn}}{}
  \ifentrytype{incollection}{\clearfield{doi}\clearfield{issn}\clearfield{isbn}}{}
  \ifentrytype{book}{\clearfield{doi}\clearfield{issn}\clearfield{isbn}}{}
}

\addbibresource{reference.bib}

\usepackage{etoolbox}

\makeatletter

\@ifpackageloaded{hyperref}{%
  \@ifpackageloaded{biblatex}{%
    \DeclareRobustCommand{\SkipTocEntry}[5]{}
  }%
}{}

\let\origsubsection\subsection
\renewcommand{\subsection}{%
  \@ifstar{\subsection@star}{\origsubsection}
}%
\newcommand{\subsection@star}[1]{%
  \addtocontents{toc}{\protect\SkipTocEntry}
  
  \origsubsection*{#1}
  
}

\newcommand{\addresseshere}{%
  \enddoc@text\let\enddoc@text\relax
}

\makeatother

\title{Proper cocycles, measure equivalence and $L_p$-Fourier multipliers}

\author[Wang,  Xia and Yao]{Simeng Wang,  Runlian Xia  \and Gan Yao}
	
\address[Simeng Wang]{Institute for Advanced Study in Mathematics, Harbin Institute of Technology,  Harbin 150001, China.}
\email{simeng.wang@hit.edu.cn}

\address[Runlian Xia]{School of mathematics and Statistics, University of Glasgow, University Avenue, Glasgow G12 8QQ, UK}
\email{Runlian.Xia@glasgow.ac.uk}

\address[Gan Yao]{Institute for Advanced Study in Mathematics, Harbin Institute of Technology,  Harbin 150001, China.}
\email{gan.yao3@outlook.com}

\begin{document}

\begin{abstract}
  We develop a new transference method for completely bounded \( L_p \)-Fourier multipliers via proper cocycles arising from probability measure-preserving group actions. This method extends earlier results by Haagerup and Jolissaint, which were limited to the case \( p = \infty \).  Based on this approach, we present a new and simple proof of the main result in \cite{MR4813921} regarding the pointwise convergence of noncommutative Fourier series on amenable groups, refining the associated estimate for maximal inequalities. In addition, this framework yields a transference principle for \( L_p \)-Fourier multipliers from lattices in linear Lie groups to their ambient groups, establishing a noncommutative analogue of Jodeit's theorem. As a further application, we construct a natural analogue of the Hilbert transform on \( SL_2(\mathbb{R}) \).
  \end{abstract}
  
  \maketitle
  \tableofcontents
  \section{Introduction} 
  In noncommutative harmonic analysis, the study of Fourier multipliers on group algebras is of paramount importance. This investigation traces its origins to the pioneering work of Haagerup \cite{MR520930}, which settled a long-standing conjecture concerning Grothendieck's approximation property for the reduced C*-algebras of free groups using Fourier multipliers.
  In subsequent work, de Cannière and Haagerup \cite{MR0784292} and Cowling and Haagerup \cite{MR0996553} investigated  completely bounded Fourier multipliers on simple Lie groups, leading to profound results on the weak amenability of these groups. These approximation properties also play an essential role in the modern theory of von Neumann algebras. Indeed, the aforementioned investigations in \cite{MR0996553} derived one of the first results of non-isomorphisms of von Neumann algebras generated by lattices; the approximation properties also constitute a core ingredient in Popa's deformation/rigidity theory, and the study
  of strong solidity and uniqueness of Cartan subalgebras (see e.g. \cite{MR2334200,MR2680430,MR3087388,MR3259044}).
  
  As in the classical setting, Fourier multipliers on noncommutative $L_p$-spaces over groups arise naturally but are significantly more delicate to analyze. One of the first remarkable achievements in this direction is due to Lafforgue and de la Salle \cite{MR2838352}, who demonstrated that the completely bounded Fourier multipliers on the noncommutative $L_p$ space over $SL_3(\mathbb{Z})$ exhibits a rigid phenomenon. This result implies, in particular, that these \( L_p \)-spaces lack the completely bounded approximation property for large \( p \). This line of research was subsequently extended to other simple Lie groups by Haagerup, de Laat, and collaborators \cite{MR3047470,MR3453357,MR3781331,MR3035056}. Junge, Mei, and Parcet \cite{MR3283931,MR3776274} established the complete boundedness of noncommutative Riesz transforms and certain Hörmander--Mikhlin type multipliers associated with finite-dimensional cocycles. Parcet, Ricard, and de la Salle \cite{MR4408121} provided a Hörmander--Mikhlin type criterion for \( SL_n(\mathbb{R}) \) with \( n \geq 3 \), which was further generalized to higher-rank simple Lie groups by Conde-Alonso, González-Pérez, Parcet, and Tablate \cite{condealonso2024hormandermikhlintheoremhighrank}. Recently, Caspers \cite{MR4669315} established noncommutative Calderón--Torchinsky theorems on Lie groups. Moreover, Mei, Ricard, and Xu \cite{MR4408132} developed a Hörmander--Mikhlin theory for free groups and amalgamated free products of groups.\par
  
    In the aforementioned studies, a variety of transference techniques for Fourier multipliers have become a fundamental tool in multiple contexts, for instance the transference between Fourier and Schur multipliers \cite{MR2866074,MR3378821} as well as the Coifman--Weiss transference method \cite{MR481928}. Amongst others, a basic ingredient in the aforementioned seminal work \cite{MR0996553} on the isomorphism problem for group von Neumann algebras is a transference method that lifts Fourier multipliers from a lattice to its ambient group, which immediately implies that the lattices and the ambient group share precisely the same Cowling--Haagerup constant for weak amenability. This means, roughly speaking, that these lattices admit uniform bounds for appropriate approximate identities consisting of completely bounded Fourier multipliers on the Fourier algebras (see also \cite{MR3476201}). 
  Inspired by this work, Cowling and Zimmer \cite{MR1007408} and Jolissaint \cite{MR1756981} introduced an analogous transference method for proper cocycles arising from group actions on probability spaces, thereby showing that weak amenability---and more generally other approximation properties---remains invariant under measure equivalence (see also \cite{MR3310701}).
  
  The corresponding $L_p$ theory of the aforementioned transference method arises naturally in the study of noncommutative $L_p$-spaces and the Fourier multipliers acting on them. Indeed, a central motivation for developing the theory of noncommutative Fourier multipliers on $L_p$-spaces is to understand the $L_p$-analogue of Cowling--Haagerup's theory---for instance the rigidity phenomena of the associated noncommutative $L_p$-spaces over higher rank lattices when $p$ is close to $2$. In view of Cowling--Haagerup's approach, it is thus essential to understand the relationships between $L_p$ Fourier multipliers on different lattices and those on their ambient groups. Moreover, as remarked by \cite[Appendix B]{MR3482270}, the extension of Fourier multipliers from lattices to ambient groups may be viewed as a noncommutative counterpart of Jodeit's theorem, which in the classical setting addresses the extension from $\mathbb Z ^d$ to $\mathbb R^d$. It is worth noting that extending such methods to the $L_p$ setting is far from straightforward. Prior results rely heavily on Gilbert's characterization of completely bounded Fourier multipliers on the Fourier algebra, a tool which is unavailable in the $L_p$ framework.
  
  This paper aims to establish the aforementioned transference theory in the setting of Fourier multipliers on noncommutative $L_p$-spaces over groups, introducing new examples of completely bounded Fourier multipliers arising from measure equivalence, lattices, and Hilbert transforms. In the following let $G$ be a locally compact group. The group von Neumann algebra of $G$, denoted by $\mathcal{L}(G)$,  is the von Neumann algebra generated by the left regular representation $\lambda$ on $ L_2 (G) $, that is,
  \[
  \mathcal{L}(G) :=\overline{\rm span}^{w*} 
  \big\{ \lambda_g : g \in G \big\} \subset B(L_2 (G)).
  \] 
  We write $\lambda(f)=\int_G f(g)\lambda_g\dd g \in \mathcal{L}(G)$ for $f\in L_1 (G)$. When $G$ is unimodular, the von Neumann algebra $\mathcal{L}(G)$ admits a normal semifinite faithful trace satisfying 
  \[
  \tau(\lambda(f))=f(e),\quad \forall f\in C_c(G)*C_c(G),
  \]
  where $C_c(G)$ is the space of all continuous functions with compact support.  The associated noncommutative \( L_p \)-spaces \( L_p(\mathcal{L}(G)) \) with respect to $\tau$ naturally generalize classical \( L_p \)-spaces on the Pontryagin dual group (see the next section for precise definitions).
  Given $m\in L_\infty(G)$, the Fourier multiplier with symbol \( m \) is defined as the (potentially unbounded) linear operator $T_m: \lambda (C_c(G))\subset \mathcal L (G)\rightarrow \mathcal L (G) $ given by 
  $$
  T_m (\lambda (f))= \int_G m(g) f(g)\lambda_g \dd g.
  $$
  Establishing the boundedness of such Fourier multipliers on noncommutative \( L_p \)-spaces poses significant challenges that are absent in the commutative setting. We summarize our results in the following four axes. 
  \subsection*{Transference via  cocycles and measure equivalence}
Consider a measure-preserving action of $G$ on a finite measure space $(X,\mu )$, and let $H$ be another locally compact group. A Borel map $\alpha:G\times X\to H$ is called a \emph{Borel cocycle} if it satisfies
\begin{equation}\label{Eqn: cocycle identity left}
  \alpha(g_1 g_2, x) = \alpha(g_1, g_2 x) \alpha(g_2, x).
\end{equation}
This identity is the natural compatibility condition for defining an action of $G$ on the spaces of Borel functions $\phi:X\to H$ via $(g,\phi)\mapsto \phi(g\cdot )\alpha(g,x) $. Such cocycles arise frequently in various settings of group dynamics, including prominent examples from the theory of lattices and measure equivalence, which will be discussed later. Haagerup \cite{MR0996553,MR3476201} and Jolissaint \cite{MR1756981,MR3310701} observed a close connection between Fourier multipliers on $H$ and those on $G$. More precisely, given a symbol $m\in L_\infty(H)$, one may define an associated symbol $\widetilde{m} \in L_\infty (G)$ by 
\begin{equation}\label{eqn: formula of tilde m}
	\widetilde{m}(g)=\frac{1}{\mu(X)}\int_X m(\alpha(g,x))\dd\mu(x),\quad g\in G.
\end{equation}
It has been shown in \cite{MR1756981,MR3310701} that $\widetilde{m} $ is a completely bounded (resp. completely positive) Fourier multiplier on  $\mathcal L(G)$ if $m$ is so on $\mathcal L(H)$ (see also \cite{MR0996553,MR3476201} for partial results for lattices); moreover, this transference keeps certain geometric or analytic properties when $\alpha$ is proper. These observations have noteworthy applications in the study of approximation properties and measure equivalence invariants, as discussed in previous paragraphs.
The following theorem establishes analogous results for Fourier multipliers on the associated noncommutative $L_p$-spaces.
\begin{theorem}\label{Thm: Transference from L(H) to L(G)}
	Let $1\leq p\leq \infty$. Let $H$ be a discrete group, $G$ be a unimodular locally compact second countable  group and $(X,\mu)$ be a finite measure space. Let $\theta$ be a $\mu$-preserving action of $G$ on $ (X,\mu)$. Assume that the crossed product $L_\infty(X)\rtimes_\theta G$ is QWEP. Let $\alpha:G\times X\to H$ and $m, \widetilde{m}$ be as above. Then we have
	\[
	\norm{T_{\widetilde{m}}:L_p(\mathcal{L}(G))\to L_p(\mathcal{L}(G))}_{\mathrm{cb}}\leq \norm{T_{m}:L_p(\mathcal{L}(H))\to L_p(\mathcal{L}(H))}_{\mathrm{cb}}.
	\]
\end{theorem}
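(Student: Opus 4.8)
The plan is to realise $T_{\widetilde m}$ as a trace‑preserving compression of an amplified multiplier over the crossed product $M:=L_\infty(X)\rtimes_\theta G$. After normalising $\mu$ to a probability measure, $M$ carries a finite trace $\tau_M$ and comes with the implementing unitaries $\lambda^M_g\in\mathcal U(M)$. Out of the cocycle $\alpha$ I would build a normal, trace‑preserving, unital $*$‑homomorphism $J\colon\mathcal L(G)\to M\bar\otimes\mathcal L(H)$, let $\mathcal F_J\colon M\bar\otimes\mathcal L(H)\to J(\mathcal L(G))$ be the trace‑preserving conditional expectation, and set
\[
\Psi:=J^{-1}\circ\mathcal F_J\circ(\mathrm{id}_M\otimes T_m)\circ J .
\]
Since a trace‑preserving normal $*$‑homomorphism is a complete isometry on $L_p$‑spaces and $\mathcal F_J$ is a complete contraction, this gives $\|\Psi\|_{\mathrm{cb},L_p(\mathcal L(G))}\le\|\mathrm{id}_M\otimes T_m\|_{\mathrm{cb},L_p(M\bar\otimes\mathcal L(H))}$, so the theorem reduces to two points: (i) $\mathrm{id}_M\otimes T_m$ is bounded on $L_p(M\bar\otimes\mathcal L(H))$ with cb norm $\le\|T_m\|_{\mathrm{cb},L_p(\mathcal L(H))}$; (ii) $\Psi$ agrees with $T_{\widetilde m}$ on $\lambda^G(C_c(G))$.

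For the construction of $J$, set for each $g\in G$
\[
V_g:=\int_X^{\oplus}1\otimes\lambda^H_{\alpha(g,g^{-1}x)}\,\dd\mu(x)\ \in\ L_\infty(X)\bar\otimes\mathcal L(H)\subseteq M\bar\otimes\mathcal L(H);
\]
discreteness of $H$ together with the second countability of $G$ and Borel measurability of $\alpha$ ensure that $x\mapsto\lambda^H_{\alpha(g,g^{-1}x)}$ is a bounded Borel $\mathcal L(H)$‑valued map, so $V_g$ is a well‑defined unitary. Using $\alpha(e,\cdot)\equiv e$ and the cocycle identity \eqref{Eqn: cocycle identity left} one verifies the twisted cocycle relation $V_{g_1g_2}=V_{g_1}\,(\theta_{g_1}\otimes\mathrm{id})(V_{g_2})$; since $V_g$ also commutes with $L_\infty(X)\otimes 1$, the map $g\mapsto V_g(\lambda^M_g\otimes1)$ is a unitary representation of $G$ in $M\bar\otimes\mathcal L(H)$ whose conjugation action on $L_\infty(X)\otimes 1$ is $\theta\otimes\mathrm{id}$. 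Hence $J(\lambda^G_g):=V_g(\lambda^M_g\otimes1)$ defines a $*$‑homomorphism on the $w^*$‑dense $*$‑subalgebra generated by the $\lambda^G_g$ (for non‑discrete $G$ one works with the smearings $\lambda^G(f)$, $f\in C_c(G)$, and the $\lambda^M(f)$, with no other change); it is trace‑preserving there because $\tau_M$ vanishes on $L_\infty(X)\lambda^M_g$ for $g\ne e$, so $J$ extends to a normal, trace‑preserving, unital $*$‑homomorphism $\mathcal L(G)\to M\bar\otimes\mathcal L(H)$.

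Point (i) is where I expect the main difficulty to lie, and it is the sole reason for the QWEP hypothesis: complete boundedness of $T_m$ on $L_p(\mathcal L(H))$ only controls the matrix amplifications $\mathrm{id}_{S_p^n}\otimes T_m$, whereas $\mathrm{id}_M\otimes T_m$ is an amplification along a general (non‑matricial) von Neumann algebra. For $p=\infty$ this costs nothing, since normal cb maps amplify with equal cb norm; for $p<\infty$ one genuinely needs that $M$ is QWEP. The relevant input is the known fact that when $M$ is QWEP, every completely bounded $T$ on $L_p(\mathcal Q)$ amplifies to a bounded $\mathrm{id}_{L_p(M)}\otimes T$ on $L_p(M\bar\otimes\mathcal Q)$ with $\|\mathrm{id}_{L_p(M)}\otimes T\|_{\mathrm{cb}}\le\|T\|_{\mathrm{cb},L_p(\mathcal Q)}$; I would either cite this or prove it by an ultraproduct argument, embedding $L_p(M\bar\otimes\mathcal Q)$ completely isometrically and complementably into an ultraproduct of the spaces $L_p(M_{n_i}\bar\otimes\mathcal Q)=S_p^{n_i}[L_p(\mathcal Q)]$ in a way intertwining $\mathrm{id}_M\otimes T_m$ with $(\mathrm{id}_{M_{n_i}}\otimes T_m)_i$, and then invoking the definition of the cb norm.

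Finally, for point (ii): $\mathrm{id}_M\otimes T_m$ is a bimodule map over $M\otimes 1$ and, by $L_p$‑continuity, sends $V_g$ to $V_g^{(m)}:=\int_X^{\oplus}1\otimes m(\alpha(g,g^{-1}x))\lambda^H_{\alpha(g,g^{-1}x)}\,\dd\mu(x)$, so $(\mathrm{id}_M\otimes T_m)(J(\lambda^G_g))=V_g^{(m)}(\lambda^M_g\otimes1)$. Using $\lambda^H_{\alpha(g,g^{-1}x)}(\lambda^H_{\alpha(g,g^{-1}x)})^{*}=1$ and the $\theta$‑invariance of $\mu$ (change of variables $x\mapsto gx$), one computes for all $g,h\in G$
\[
(\tau_M\otimes\tau_H)\big[(\mathrm{id}_M\otimes T_m)(J(\lambda^G_g))\cdot J(\lambda^G_h)^{*}\big]=\delta_{g,h}\int_X m(\alpha(g,x))\,\dd\mu(x)=\delta_{g,h}\,\widetilde m(g),
\]
and, by the defining property of the trace‑preserving conditional expectation $\mathcal F_J$, this forces $\mathcal F_J\big((\mathrm{id}_M\otimes T_m)(J(\lambda^G_g))\big)=\widetilde m(g)J(\lambda^G_g)$, whence $\Psi(\lambda^G_g)=\widetilde m(g)\lambda^G_g$ and, by integration, $\Psi(\lambda^G(f))=T_{\widetilde m}(\lambda^G(f))$ for $f\in C_c(G)$. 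Combining this with (i) and the bounds on $J$ and $\mathcal F_J$, we conclude that $T_{\widetilde m}$ extends to the completely bounded map $\Psi$ on $L_p(\mathcal L(G))$ with $\|\Psi\|_{\mathrm{cb}}\le\|T_m\|_{\mathrm{cb},L_p(\mathcal L(H))}$, which is the asserted inequality; in the case $p=\infty$ the argument recovers the Haagerup–Jolissaint transference.
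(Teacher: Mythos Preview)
Your proposal is correct and follows essentially the same strategy as the paper: embed $\mathcal{L}(G)$ trace-preservingly into $(L_\infty(X)\rtimes_\theta G)\,\overline{\otimes}\,\mathcal{L}(H)$ via the cocycle, factor $T_{\widetilde m}$ as $\mathbb{E}\circ(\mathrm{id}\otimes T_m)\circ J$, and invoke the QWEP amplification bound (the paper cites the same unpublished result of Junge). The only difference is packaging---the paper builds the embedding through the left Hilbert algebra formalism (defining $\iota_0$ on $C_c(G)$, checking it lands among the left-bounded vectors, and applying $\pi_l$), while you construct the same map directly as the cocycle-twisted representation $g\mapsto V_g(\lambda^M_g\otimes 1)$.
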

Our approach to this theorem is notably different from the case $p=\infty$ studied in \cite{MR3476201,MR1756981,MR3310701}; the latter relies on Gilbert's theorem, which is unavailable for $1<p<\infty$. Moreover, the space $L_p$ here can be indeed replaced by other Orlicz spaces. The key to proving the theorem is the construction of a new special injective $*$-homomorphism from $\mathcal{L}(G)$ into the crossed product $L_\infty(X)\rtimes (H\times G)$, which encodes the information of the cocycle $\alpha$. We will provide full details of the construction of this homomorphism and prove the theorem in Section \ref{Sec: embeddeding vNa}.

This approach leads to several applications, which will be discussed in the following paragraphs. Let us first mention an immediate corollary. It is well-known that the above cocycle exists when $G$ and $H$ are measure equivalent (see Section \ref{Subsec: Measure equivalence of groups} for more details). It has been shown in \cite{MR3310701} that weak amenability is an invariant under measure equivalence. Now we may establish an analogous result in the $L_p$-setting. For $1\leq p\leq \infty$, we say that $G$ is \emph{$L_p$-weakly amenable} if $G$ admits a sequence of symbols $(m_n)_{n\geq 0}\subset A(G)$ such that $m_n\to 1$ uniformly on compact sets and 
\[
\sup_n \norm{T_{m_n}:L_p(\mathcal{L}(G))\to L_p(\mathcal{L}(G))}_{\mathrm{cb}}<\infty ,
\]
where $A(G)$ denotes the Fourier algebra of $G$. The smallest possible supremum of the above quantity, for which such a sequence $(m_n)$ exists, is denoted by $\Lambda_{p,\mathrm{cb}}(G)$, which serves as the $L_p$-analogue of the celebrated Cowling--Haagerup constant. When $G$ is a discrete group, $L_p$-weak amenability is nothing but the completely bounded approximation property of $L_p (\mathcal L (G))$, which is known to be a stronger property than the Haagerup--Kraus approximation property (AP), as demonstrated in \cite{MR1971296}. The study of $L_p$-weak amenability for higher rank lattices for $p$ close to $2$ remains a challenging problem in the field of operator spaces. It is widely recognized among experts that a resolution of this problem would possibly provide an answer to Connes' rigidity problem for higher-rank lattices.
The following result asserts that $L_p$-weak amenability is invariant under measure equivalence between discrete groups, assuming a natural QWEP condition. 
\begin{corollary}\label{Coro: transferred multiplier of two lattices in non-amenable groups}
Let $1\leq p\leq \infty$. Let $\Gamma$ and $\Delta$ be two discrete groups that are measure equivalent with a measure equivalence coupling $(\Sigma,\sigma)$. Assume that $L_\infty (\Sigma/\Delta) \rtimes \Gamma $ and $L_\infty (\Gamma\backslash\Sigma) \rtimes \Delta $ are QWEP. Then $\Gamma$ is $L_p$-weakly amenable if and only if $\Delta$ is $L_p$-weakly amenable, and in this case, $\Lambda_{p,\mathrm{cb}}(\Gamma)=\Lambda_{p,\mathrm{cb}}(\Delta)$.
\end{corollary}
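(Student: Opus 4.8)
The plan is to deduce Corollary~\ref{Coro: transferred multiplier of two lattices in non-amenable groups} from Theorem~\ref{Thm: Transference from L(H) to L(G)} by exploiting the symmetry of the measure equivalence coupling. Recall that a measure equivalence coupling $(\Sigma,\sigma)$ between the discrete groups $\Gamma$ and $\Delta$ is a $\sigma$-finite measure space carrying commuting, measure-preserving, essentially free actions of $\Gamma$ and $\Delta$, each admitting a finite-measure fundamental domain; this produces, in the standard way, a measure-preserving action of $\Gamma$ on the finite measure space $X:=\Sigma/\Delta$ together with a Borel cocycle $\alpha\colon \Gamma\times X\to\Delta$ satisfying \eqref{Eqn: cocycle identity left}, and symmetrically an action of $\Delta$ on $Y:=\Gamma\backslash\Sigma$ with a cocycle $\beta\colon\Delta\times Y\to\Gamma$. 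I would begin by recalling this construction (referring to Section~\ref{Subsec: Measure equivalence of groups}) and by noting that the crossed products appearing as QWEP hypotheses are exactly $L_\infty(X)\rtimes\Gamma$ and $L_\infty(Y)\rtimes\Delta$, so that Theorem~\ref{Thm: Transference from L(H) to L(G)} applies in both directions.

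Next I would fix $1\le p\le\infty$ and suppose $\Delta$ is $L_p$-weakly amenable, witnessed by a sequence $(m_n)\subset A(\Delta)$ with $m_n\to 1$ uniformly on compact (hence, $\Delta$ being discrete, finite) sets and $\sup_n\|T_{m_n}\colon L_p(\mathcal L(\Delta))\to L_p(\mathcal L(\Delta))\|_{\mathrm{cb}}\le \Lambda$. Applying the transference formula \eqref{eqn: formula of tilde m} to the cocycle $\alpha$, set $\widetilde{m_n}(g)=\tfrac1{\mu(X)}\int_X m_n(\alpha(g,x))\,\dd\mu(x)$ for $g\in\Gamma$. Theorem~\ref{Thm: Transference from L(H) to L(G)} (with $H=\Delta$, $G=\Gamma$) immediately gives $\|T_{\widetilde{m_n}}\colon L_p(\mathcal L(\Gamma))\to L_p(\mathcal L(\Gamma))\|_{\mathrm{cb}}\le\|T_{m_n}\colon L_p(\mathcal L(\Delta))\to L_p(\mathcal L(\Delta))\|_{\mathrm{cb}}\le\Lambda$, so the transferred symbols are uniformly completely bounded. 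The two remaining points are: (i) $\widetilde{m_n}\in A(\Gamma)$, which follows because averaging a Fourier algebra element against a cocycle preserves membership in the Fourier algebra --- concretely, $m_n=\langle \lambda_\Delta(\cdot)\xi_n,\eta_n\rangle$ for $\xi_n,\eta_n\in\ell_2(\Delta)$, and one checks that the fields $x\mapsto \xi_n,\eta_n$ twisted by $\alpha$ assemble into vectors in a direct integral realizing $\widetilde{m_n}$ as a coefficient of a representation weakly contained in the regular representation of $\Gamma$ (this is the classical observation underlying the $p=\infty$ results of \cite{MR0996553,MR1756981,MR3310701}); and (ii) $\widetilde{m_n}\to 1$ uniformly on finite subsets of $\Gamma$, which follows from dominated convergence in \eqref{eqn: formula of tilde m} once one knows that for each fixed $g$ the values $\alpha(g,x)$ lie, for $\mu$-a.e.\ $x$, in a fixed finite set on which $m_n\to 1$; properness of the cocycle (in the measure equivalence setting $\alpha(g,\cdot)$ takes finitely many values, as $\Sigma/\Delta$ is tiled by finitely many $\Gamma$-translates of a fundamental domain up to the $g$-translate) guarantees exactly this. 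Hence $\Gamma$ is $L_p$-weakly amenable and $\Lambda_{p,\mathrm{cb}}(\Gamma)\le\Lambda_{p,\mathrm{cb}}(\Delta)$.

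By the complete symmetry of the hypotheses --- the coupling $(\Sigma,\sigma)$ is simultaneously a coupling in the other direction, with the roles of $\Gamma$ and $\Delta$, of $X$ and $Y$, and of $\alpha$ and $\beta$ interchanged, and the second QWEP assumption covers precisely this case --- the same argument with $\beta$ in place of $\alpha$ yields $\Lambda_{p,\mathrm{cb}}(\Delta)\le\Lambda_{p,\mathrm{cb}}(\Gamma)$. Combining the two inequalities gives the equivalence of $L_p$-weak amenability and the equality $\Lambda_{p,\mathrm{cb}}(\Gamma)=\Lambda_{p,\mathrm{cb}}(\Delta)$.

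The only genuinely non-formal step is point~(i), verifying that the transferred symbol $\widetilde{m_n}$ again lies in the Fourier algebra $A(\Gamma)$; this is exactly the content of the classical cocycle-transference lemmas for the case $p=\infty$, so I would either cite \cite{MR1756981,MR3310701} directly or reproduce the short direct-integral argument, depending on whether the precise statement needed is available off the shelf. Everything else --- the construction of the cocycle from the coupling, the uniform bound, and the convergence $\widetilde{m_n}\to 1$ --- is routine given Theorem~\ref{Thm: Transference from L(H) to L(G)} and the standard structure theory of measure equivalence couplings.
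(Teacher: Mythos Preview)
Your proposal is correct and follows essentially the same route as the paper: build the ME cocycle $\alpha\colon\Gamma\times X\to\Delta$, apply Theorem~\ref{Thm: Transference from L(H) to L(G)} for the cb-norm bound, verify $\widetilde{m_n}\in A(\Gamma)$ and $\widetilde{m_n}\to 1$, then invoke symmetry for the reverse inequality. The paper packages your points (i) and (ii) into two preparatory lemmas (Lemmas~\ref{lem: A to A} and~\ref{lem: uniformly convergence to uniformly convergence}), after first citing \cite[Lemma~3.1]{MR3310701} to obtain properness of the ME cocycle; this matches your plan to cite \cite{MR1756981,MR3310701} for the Fourier-algebra step.

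One correction to your sketch: the parenthetical in (ii), that $\alpha(g,\cdot)$ takes only finitely many values, is not true in general (think of the ME coupling coming from two non-uniform lattices in a common group, e.g.\ $SL_2(\mathbb{Z})\subset SL_2(\mathbb{R})$, where the fundamental domain is non-compact and a single $g$-translate can meet infinitely many $\Delta$-tiles). What properness actually yields is that for every $\varepsilon>0$ there is a set $A\subset X$ with $\mu(X\setminus A)<\varepsilon$ such that $\{\alpha(g,x):x\in A\cap g^{-1}A\}$ is finite; combined with the uniform bound $\sup_n\|m_n\|_\infty<\infty$ this gives $\widetilde{m_n}(g)\to 1$, and this is exactly how the paper's Lemma~\ref{lem: uniformly convergence to uniformly convergence} argues. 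So your invocation of properness is right, but the gloss on what it says needs the $\varepsilon$-exhaustion rather than a global finiteness claim.
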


\subsection*{Pointwise convergence of noncommutative Fourier series}
The pointwise convergence of Fourier series and the study of associated maximal inequalities constitute a central topic in harmonic analysis. In the noncommutative setting, the work of Cowling, Haagerup, and others on approximation properties of groups (e.g., \cite{MR520930,MR0784292,MR0996553}) can in fact be interpreted as a theory of mean convergence for Fourier series, analogous to the classical Fejér and Bochner--Riesz summation methods. The noncommutative analogue of almost everywhere convergence was introduced by Lance in his study of noncommutative ergodic theory \cite{MR428060}; this type of convergence is typically referred to as \emph{(bilaterally) almost uniform} convergence, abbreviated as \emph{b.a.u.} and \emph{a.u.}, respectively, see Definition \ref{Def: b.a.u and a.u}. The corresponding problem of pointwise convergence of Fourier series on group von Neumann algebras is considerably subtle. A systematic study was recently carried out by Hong, Wang, and the first author in \cite{MR4813921}. Due to various intrinsic difficulties in the theory of noncommutative maximal inequalities, their approach is lengthy and relies on a delicate adaptation of Bourgain's bootstrap method to the noncommutative setting. \par

Our aforementioned transference method yields, in contrast, a significantly simpler proof of one of the main results in \cite{MR4813921}.
More precisely, recall that a locally compact second countable group $G$ is amenable if it admits a sequence of unital positive definite functions $(m_n)_{n \geq 0} \subset A(G)$ such that $m_n \to 1$ uniformly on compact sets. It is well known that $T_{m_n} f \to f$ in norm for all $f \in L_p(\mathcal{L}(G))$ with $1 \leq p < \infty$. In this setting, using the measure equivalence between $G$ and the group of integers $\mathbb{Z}$, we construct such a sequence $(m_n)_{n \geq 0}$ for which, for all $f \in L_p(\mathcal{L}(G))$ and $1 < p < \infty$,
$$
T_{m_n} f \to f \quad \text{b.a.u.\ (a.u.\ if } p \geq 2\text{), as } n \to \infty.
$$
Our new proof also refines the estimates for the corresponding maximal inequalities obtained in \cite{MR4813921}. In particular, by extrapolation, the above b.a.u.\ convergence extends to the space $L\log^2 L(\mathcal{L}(G))$, which is the commonly recognized critical Orlicz space for noncommutative strong-type maximal inequalities, whereas in \cite{MR4813921} only the b.a.u.\ convergence on the larger space $L\log^{22} L(\mathcal{L}(G))$ could be established.
\subsection*{Noncommutative Jodeit theorem}
The classical Jodeit theorem asserts that any \( L_p \)-bounded Fourier multiplier on \( \mathbb{Z}^n \) is the restriction of an \( L_p \)-bounded Fourier multiplier on \( \mathbb{R}^n \). More precisely, given an \( L_p \)-bounded Fourier multiplier \( m \in \ell_\infty(\mathbb{Z}^n) \), which can be viewed as a discrete measure on \( \mathbb{R}^n \), the piecewise linear extension
\[
\widetilde{m} = \chi_{[-1/2,1/2]} * m * \chi_{[-1/2,1/2]}
\]
defines an \( L_p \)-bounded Fourier multiplier on \( \mathbb{R}^n \). As previously noted, it is natural to consider the problem of extending multipliers from a lattice to a locally compact group, which serves as a basic ingredient in Cowling--Haagerup's seminal work \cite{MR0996553}.
Let \( \Gamma \subset G \) be a lattice in a locally compact second countable group \( G \), and let \( \Omega \) be a symmetric fundamental domain. Given a function \( m: \Gamma \to \mathbb{C} \), Haagerup~\cite{MR3476201} considered the function \( \widetilde{m}: G \to \mathbb{C} \) defined by
\begin{equation}\label{Eqn: Haagerup transferred multiplier}
	\widetilde{m} = \chi_\Omega * m * \chi_\Omega,
\end{equation}
and proved that \( \widetilde{m} \) defines a completely bounded Fourier multiplier on \( \mathcal{L}(G) \), provided that \( m \) defines a completely bounded Fourier multiplier on \( \mathcal{L}(\Gamma) \).
The corresponding \( L_p \)-analogue of this result remained open prior to our work; see \cite{MR279533,MR390652} for partial results in the case of abelian groups, and \cite{MR2838352,MR3482270,MR4612556} for related results concerning Schur multipliers and amenable groups.
As observed earlier, the extended multiplier \( \widetilde{m} \) coincides with the one defined in \eqref{eqn: formula of tilde m} when one considers the natural cocycle \( G \times \Omega \to \Gamma \) associated with the lattice structure. In particular, this yields a complete resolution of the noncommutative Jodeit theorem for hyperlinear lattices (for example, any lattice that is a finitely generated subgroup of \( \mathrm{GL}_n(\mathbb{K}) \) for some field \( \mathbb{K} \)).
 \begin{corollary}\label{Cor: transferred multiplier for non-amenable groups}
 	Let $\Gamma\subset G$ be a hyperlinear lattice in a locally compact second countable group $G$ with a fundamental domain $\Omega$. Let $m$ and $\widetilde{m}$ be as above, and $1\leq p\leq \infty$. Then 
 	  \begin{equation}\label{ineq: jodeit for non-amenable group}
 		\norm{T_{\widetilde{m}}:L_p(\mathcal{L}(G))\to L_p(\mathcal{L}(G))}_{\mathrm{cb}}\leq \norm{T_{m}:L_p(\mathcal{L}(\Gamma))\to L_p(\mathcal{L}(\Gamma))}_{\mathrm{cb}}.
 	\end{equation}
 \end{corollary}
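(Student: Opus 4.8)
The plan is to apply Theorem~\ref{Thm: Transference from L(H) to L(G)} directly with $H=\Gamma$; note that, although a lattice produces a measure equivalence coupling between $\Gamma$ and $G$, Corollary~\ref{Coro: transferred multiplier of two lattices in non-amenable groups} cannot be quoted verbatim since $G$ need not be discrete. Concretely, I would first recognise the operation $m\mapsto\widetilde m=\chi_\Omega*m*\chi_\Omega$ of \eqref{Eqn: Haagerup transferred multiplier} as an instance of the cocycle construction \eqref{eqn: formula of tilde m}. Take $X=G/\Gamma$ with its normalised $G$-invariant probability measure $\mu$; this measure exists and is finite exactly because $\Gamma$ is a lattice, and the presence of a lattice also forces $G$ to be unimodular, so the hypotheses imposed on $G$ and on $H=\Gamma$ in Theorem~\ref{Thm: Transference from L(H) to L(G)} are satisfied automatically. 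Fix a Borel section of $G\to G/\Gamma$ with image $\Omega$ and write each $g\in G$ uniquely as $g=\sigma(g)\,c(g)$ with $\sigma(g)\in\Omega$, $c(g)\in\Gamma$. Then $\theta(g)x:=\sigma(gx)$ defines a $\mu$-preserving action of $G$ on $\Omega\cong X$, and $\alpha(g,x):=c(gx)$ defines a Borel cocycle $\alpha\colon G\times\Omega\to\Gamma$ satisfying \eqref{Eqn: cocycle identity left}; this is the ``natural cocycle'' alluded to in the introduction.

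For this $\alpha$ one checks that the symbol $\widetilde m$ produced by \eqref{eqn: formula of tilde m} is precisely $\chi_\Omega*m*\chi_\Omega$. This is a short direct computation: viewing $m$ as the atomic measure $\sum_{\gamma\in\Gamma}m(\gamma)\delta_\gamma$ on $G$ and unfolding the two convolutions against $\chi_\Omega$, one finds that $\chi_\Omega*m*\chi_\Omega(g)$ equals $\frac{1}{\mu(X)}\int_X m(\alpha(g,x))\,\dd\mu(x)$, the symmetry $\Omega=\Omega^{-1}$ being what aligns the left and right copies of $\chi_\Omega$ with the cocycle. Should a left/right convention mismatch force one to replace $m$ by $\check m$, $\check m(\gamma):=m(\gamma^{-1})$, this is immaterial: $T_{\check m}$ and $T_m$ are intertwined by the trace-preserving $*$-antiautomorphism $\lambda_\gamma\mapsto\lambda_{\gamma^{-1}}$ of $\mathcal L(\Gamma)$, which is completely isometric on each $L_p(\mathcal L(\Gamma))$, so the two completely bounded $L_p$-norms coincide.

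The remaining and, in my view, most delicate point is to verify the QWEP hypothesis of Theorem~\ref{Thm: Transference from L(H) to L(G)} for the action $\theta$, i.e.\ that $L_\infty(G/\Gamma)\rtimes_\theta G$ is QWEP. Here I would use that the action of $G$ on $G/\Gamma$ is transitive with stabiliser a conjugate of $\Gamma$: Mackey's imprimitivity theorem, in its von Neumann algebra form, then provides a normal $*$-isomorphism $L_\infty(G/\Gamma)\rtimes_\theta G\cong\mathcal L(\Gamma)\,\overline{\otimes}\,B(L_2(G/\Gamma))$. Since QWEP passes between $\mathcal L(\Gamma)$ and $\mathcal L(\Gamma)\,\overline{\otimes}\,B(H)$ in both directions (using that $B(H)$ is injective and that QWEP is stable under corners), this crossed product is QWEP precisely when $\mathcal L(\Gamma)$ is QWEP, which for the discrete group $\Gamma$ is exactly the hyperlinearity assumption. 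With all hypotheses in place, Theorem~\ref{Thm: Transference from L(H) to L(G)}, applied with $H=\Gamma$, the action $\theta$, the cocycle $\alpha$ and the symbol $m$, yields \eqref{ineq: jodeit for non-amenable group} directly; the only genuine work lies in the imprimitivity identification together with the QWEP/hyperlinearity dictionary for $\mathcal L(\Gamma)$.
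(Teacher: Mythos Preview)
Your proposal is correct and follows essentially the same approach as the paper: set up the lattice cocycle $\alpha:G\times\Omega\to\Gamma$, verify that $L_\infty(\Omega)\rtimes_\theta G$ is QWEP via the imprimitivity isomorphism $L_\infty(\Omega)\rtimes_\theta G\cong\mathcal L(\Gamma)\,\overline\otimes\,B(L_2(\Omega))$ (which the paper cites as \cite[Theorem~4.12, Chapter~X]{MR1943006}) together with hyperlinearity of $\Gamma$, and then invoke Theorem~\ref{Thm: Transference from L(H) to L(G)}. Your additional remarks on the automatic unimodularity of $G$, the explicit identification of $\widetilde m$ with $\chi_\Omega*m*\chi_\Omega$, and the harmless $m\leftrightarrow\check m$ ambiguity are accurate but are left implicit in the paper's own proof.
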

As in \cite{MR0996553}, this leads to the following important observation concerning $L_p$-weak amenability.
\begin{corollary}\label{Cor:LpWA lattices}
Let $\Gamma\subset G$ be a hyperlinear lattice in a locally compact second countable group $G$, and $1\leq p\leq \infty$. If $\Gamma$ is $L_p$-weakly amenable, then $G$ is $L_p$-weakly amenable, and in this case, $\Lambda_{p,\mathrm{cb}}(G)\leq \Lambda_{p,\mathrm{cb}}(\Gamma)$.
\end{corollary}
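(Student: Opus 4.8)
The plan is to run the approximating sequence for $\Gamma$ through the transference estimate of Corollary~\ref{Cor: transferred multiplier for non-amenable groups} and then check, by a tightness argument, that the transferred sequence still converges to $1$ uniformly on compacta. Normalise Haar measure on $G$ so that $|\Omega|=1$ (recall $\Omega$ is symmetric), fix $\varepsilon>0$, and choose $(m_n)_{n\ge 0}\subset A(\Gamma)$ with $m_n\to 1$ pointwise on $\Gamma$ and $\sup_n\norm{T_{m_n}:L_p(\mathcal L(\Gamma))\to L_p(\mathcal L(\Gamma))}_{\mathrm{cb}}\le \Lambda_{p,\mathrm{cb}}(\Gamma)+\varepsilon=:C$. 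Put $\widetilde m_n=\chi_\Omega*m_n*\chi_\Omega$. By Corollary~\ref{Cor: transferred multiplier for non-amenable groups}, $\norm{T_{\widetilde m_n}}_{\mathrm{cb}}\le\norm{T_{m_n}}_{\mathrm{cb}}\le C$ for all $n$; hence, once we verify that $\widetilde m_n\in A(G)$ and $\widetilde m_n\to 1$ uniformly on compact subsets of $G$, the sequence $(\widetilde m_n)$ witnesses $L_p$-weak amenability of $G$ with constant $\le C$, and letting $\varepsilon\to 0$ gives $\Lambda_{p,\mathrm{cb}}(G)\le\Lambda_{p,\mathrm{cb}}(\Gamma)$.

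For $g\in G$ let $\nu_g$ be the measure on $\Gamma$ given by $\nu_g(\{\gamma\})=|g\Omega\cap\Omega\gamma|$. Using $G=\bigsqcup_{\gamma\in\Gamma}\Omega\gamma$ and unimodularity of $G$ (automatic, since $G$ admits a lattice) one checks that $\nu_g$ is a probability measure, that $\nu_g(\{\gamma\})=\int_{\Omega\gamma}(\lambda_G(g)\chi_\Omega)(h)\dd h$, and that $\widetilde m_n(g)=\sum_{\gamma}m_n(\gamma)\,\nu_g(\{\gamma\})$; in particular $\widetilde{\mathbf 1}=\mathbf 1$, so $\widetilde m_n(g)-1=\sum_{\gamma}(m_n(\gamma)-1)\nu_g(\{\gamma\})$. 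That $\widetilde m_n\in A(G)$ is the $p=\infty$ content of the Jodeit-type construction and follows from \cite{MR3476201,MR1756981}: writing $m_n(\gamma)=\langle\lambda_\Gamma(\gamma)\xi_n,\eta_n\rangle$ with $\xi_n,\eta_n\in\ell_2(\Gamma)$, one has $\widetilde m_n(g)=\langle\sigma(g)(\mathbf 1_\Omega\otimes\xi_n),\,\mathbf 1_\Omega\otimes\eta_n\rangle$, where $\sigma$ is the representation of $G$ on $L_2(\Omega)\otimes\ell_2(\Gamma)$ obtained by twisting the Koopman representation of $G\curvearrowright\Omega$ by the lattice cocycle, and the decomposition $L_2(G)\cong L_2(\Omega)\otimes\ell_2(\Gamma)$ along $G=\bigsqcup_\gamma\Omega\gamma$ identifies $\sigma$ with $\lambda_G$; here $\mathbf 1_\Omega\in L_2(\Omega)$ precisely because $|\Omega|<\infty$, which is where the lattice hypothesis enters.

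The remaining, and main, point is the uniform convergence of $\widetilde m_n$ to $1$ on a fixed compact set $K\subset G$. The key observation is that $g\mapsto\nu_g$ is norm-continuous from $G$ into $\ell_1(\Gamma)$: from $\nu_g(\{\gamma\})=\int_{\Omega\gamma}(\lambda_G(g)\chi_\Omega)(h)\dd h$ and $G=\bigsqcup_\gamma\Omega\gamma$ we get $\norm{\nu_g-\nu_{g'}}_{\ell_1(\Gamma)}\le\norm{\lambda_G(g)\chi_\Omega-\lambda_G(g')\chi_\Omega}_{L_1(G)}$, which tends to $0$ as $g'\to g$ by strong continuity of left translation on $L_1(G)$. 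Hence $\{\nu_g:g\in K\}$ is norm-compact in $\ell_1(\Gamma)$, and therefore uniformly tight: for every $\delta>0$ there is a finite $F\subset\Gamma$ with $\sup_{g\in K}\nu_g(\Gamma\setminus F)<\delta$. Combining this with the uniform bound $\norm{m_n}_\infty\le\norm{T_{m_n}:L_p(\mathcal L(\Gamma))\to L_p(\mathcal L(\Gamma))}\le C$ (valid since $T_{m_n}\lambda_\gamma=m_n(\gamma)\lambda_\gamma$ and $\norm{\lambda_\gamma}_{L_p(\mathcal L(\Gamma))}=1$), we obtain for $g\in K$
\[
|\widetilde m_n(g)-1|\ \le\ \max_{\gamma\in F}|m_n(\gamma)-1|\ +\ (C+1)\,\nu_g(\Gamma\setminus F)\ \le\ \max_{\gamma\in F}|m_n(\gamma)-1|\ +\ (C+1)\delta,
\]
and letting $n\to\infty$ (the first term vanishes since $F$ is finite and $m_n\to 1$ pointwise) and then $\delta\to 0$ gives $\sup_{g\in K}|\widetilde m_n(g)-1|\to 0$. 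The crux is exactly this uniform control of the tails of the measures $\nu_g$ over $K$: pointwise convergence $\widetilde m_n\to 1$ together with the bound $\norm{\widetilde m_n}_\infty\le C$ alone does not force uniform convergence on compacta, and it is the norm-continuity of $g\mapsto\nu_g\in\ell_1(\Gamma)$—equivalently, the uniform integrability it produces—that closes the gap; everything else is immediate from Corollary~\ref{Cor: transferred multiplier for non-amenable groups} and standard properties of the Fourier algebra.
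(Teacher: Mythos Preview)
Your proof is correct. The comparison with the paper's argument is worth noting. For the cb-norm estimate you invoke Corollary~\ref{Cor: transferred multiplier for non-amenable groups} exactly as the paper does. For the two remaining ingredients---$\widetilde m_n\in A(G)$ and $\widetilde m_n\to 1$ uniformly on compacta---the paper appeals to its general proper-cocycle machinery: it quotes Jolissaint for the properness of the lattice cocycle $\alpha:G\times\Omega\to\Gamma$, and then applies Lemma~\ref{lem: A to A}(2) and Lemma~\ref{lem: uniformly convergence to uniformly convergence} verbatim. You instead give a direct argument tailored to the lattice situation: the key step, norm-continuity of $g\mapsto\nu_g$ in $\ell_1(\Gamma)$ via $\norm{\nu_g-\nu_{g'}}_{\ell_1}\le\norm{\lambda_G(g)\chi_\Omega-\lambda_G(g')\chi_\Omega}_{L_1(G)}$, yields uniform tightness of $\{\nu_g:g\in K\}$ and hence uniform convergence with no mention of proper cocycles or large families of Borel sets. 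Your route is more elementary and self-contained for this specific case; the paper's route has the virtue of reusing the exact same lemmas it already proved for the measure-equivalence application, so the lattice case becomes a one-line corollary of its abstract framework. Both arguments ultimately need the same uniform bound $\sup_n\norm{m_n}_\infty<\infty$, which you correctly extract from the $L_p$-cb norm of $T_{m_n}$.
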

The converse of the statement corresponds to a form of noncommutative de Leeuw's restriction theorem, which remains open except in certain local or asymptotically invariant settings (see \cite{MR3482270,MR4721791}).
\subsection*{A continuous analogue of Hilbert transforms on $SL_2 (\mathbb R)$} Hilbert transforms represent a fundamental example of a Fourier multiplier in harmonic analysis. Accordingly, the study of its noncommutative analogues is of paramount importance. Several instances of noncommutative Hilbert transforms have been investigated for free groups and, more generally, for groups acting on tree-like structures such as $SL_2(\mathbb{Z})$. In particular, a canonical Hilbert transform on $SL_2(\mathbb{Z})$ is studied in \cite{gonzalezperez2022noncommutativecotlaridentitiesgroups}:
\begin{equation}\label{eqn: Hilbert transform on PSL_2(Z)}
  m(\begin{pmatrix}
    a&b\\
    c&d
  \end{pmatrix})=\sgn(ac+bd),\quad \text{for} \begin{pmatrix}
    a&b\\
    c&d
  \end{pmatrix}\in SL_2(\mathbb{Z}) ,
\end{equation}
and is shown to define a completely bounded Fourier multiplier on $L_p(\mathcal{L}(SL_2(\mathbb{Z})))$ for every $1 < p < \infty$, as a consequence of a Cotlar-type identity.
However, this formula (or any genuine idempotent $m$ satisfying $m^2 = 1$) does \emph{not} define a completely bounded Fourier multiplier on $L_p$ for $p \neq 2$ when $SL_2(\mathbb{Z})$ is replaced by $SL_2(\mathbb{R})$; see \cite{MR4882285}. The results therein indicate that any symbol exhibiting discontinuities---i.e., “jumps”---on $SL_2(\mathbb{R})$ fails to yield an $L_p$-bounded Fourier multiplier. This suggests the need for constructing alternative, continuous extensions of $m$ that may serve as proper analogues of the Hilbert transform on $SL_2(\mathbb{R})$. As discussed earlier, a canonical approach to extending $L_p$-Fourier multipliers from a lattice to its ambient group is via noncommutative Jodeit theorems. Thus, by applying Corollary~\ref{Cor: transferred multiplier for non-amenable groups}, we obtain a completely bounded Fourier multiplier $\widetilde{m}$ on $L_p(\mathcal{L}(SL_2(\mathbb{R})))$, which naturally extends the Hilbert transform on $\mathcal{L}(SL_2(\mathbb{Z}))$ and may serve as an analogue for a Hilbert transform on $\mathcal{L}(SL_2(\mathbb{R}))$.
In Section~\ref{Sec: Transference from SL2(Z) to SL2(R)}, we also compute the explicit form of this symbol $\widetilde{m}$. Interestingly, due to the non-uniform geometry of the lattice inclusion $SL_2(\mathbb{Z}) \subset SL_2(\mathbb{R})$, the resulting symbol may exhibit improved decay properties compared to the Euclidean setting---such as H\"ormander--Mikhlin-type behavior---which we discuss further in Remark~\ref{Rmk: decay of symbol tilde m}.

\section{Preliminary results and definitions}
\subsection{Noncommutative $L_p$ spaces and noncommutative $\ell_\infty$-valued $L_p$-spaces} 
Let $\mathcal{M}$ be a von Neumann algebra with a normal semi-finite faithful trace $\tau$. Let $S_+(\mathcal{M})$ be the set of all $x\in \mathcal{M}_+$ such that $\tau(s(x))<\infty$, where $s(x)$ is the support projection of $x$. Denote the linear span of $S_+(\mathcal{M})$ by $S(\mathcal{M})$. Given $1\leq p<\infty$, we define 
\[
\norm{x}_p=(\tau(\abs{x}^p))^{\frac{1}{p}},\quad x\in S(\mathcal{M}),
\]
where $\abs{x}=(x^*x)^{\frac{1}{2}}$. The noncommutative $L_p$ space $L_p(\mathcal{M},\tau)$ is the completion of the normed space $(S(\mathcal{M}),\norm{\cdot}_p)$. In the case of no confusion, we will denote $L_p(\mathcal{M},\tau)$ simply by $L_p(\mathcal{M})$. We refer to \cite{MR1999201} for more information on noncommutative $L_p$ spaces. A map $T:L_p(\mathcal{M})\to L_p(\mathcal{M})$ is \emph{completely bounded} if the completely bounded norm
\[
\norm{T:L_p(\mathcal{M})\to L_p(\mathcal{M})}_{\mathrm{cb}}\coloneqq \sup_k   \| \id_{\mathbb{M}_k} \otimes T : {L_p(\mathbb{M}_k \bar\otimes \mathcal{M})\rightarrow L_p(\mathbb{M}_k \bar\otimes \mathcal{M})}\|
\]
is finite. We also denote $\norm{T:L_p(\mathcal{M})\to L_p(\mathcal{M})}_{\mathrm{cb}}$ by $\norm{T}_{\mathrm{cb}(L_p(\mathcal{M}))}$, and write the inequality ${\norm{T(f)}_p\leq_{\mathrm{cb}} C\norm{f}_p}$ if $\norm{T}_{\mathrm{cb}(L_p(\mathcal{M}))}\leq C$.\par

To define maximal operators, we need the noncommutative vector-valued $L_p$-spaces $L_p(\mathcal{M};\ell_\infty)$ first given by Pisier in \cite{MR1648908} and then generalized by Junge in \cite{MR1916654}. Here $L_p(\mathcal{M};\ell_\infty)$ is the space of all $x=(x_n)_{n\in \mathbb{N}}\subset L_p(\mathcal{M})$ which admit a factorization of the form $x_n=a y_n b$, where $a,b\in L_{2p}(\mathcal{M})$ and $y=(y_n)_{n\in \mathbb{N}}\subset \mathcal{M}$. The norm is given by 
\[
  \norm{x}_{L_p(\mathcal{M};\ell_\infty)}=\inf\{\norm{a}_{2p}\sup_{n}\norm{y_n}_\infty\norm{b}_{2p}\},
\]
where the infimum is taken over all possible decomposition $x=ayb$. We will adopt the convention that the norm $\norm{x}_{L_p(\mathcal{M};\ell_\infty)}$ is denoted by $\norm{{\sup}_n^+ x_n}_{p}$. To compare with the commutative case, we remark that if $(x_n)_{n\in \mathbb{N}}$ is a sequence of self-adjoint operators in $L_p(\mathcal{M})$, then $(x_n)_{n\in \mathbb{N}}$ belongs to $L_p(\mathcal{M};\ell_\infty)$ if and only if there is a positive element $a\in L_p(\mathcal{M})$ such that $-a\leq x_n\leq a$ for any $n\in \mathbb{N}$. If we take the infimum over the $p$-norms of all these $a$'s, we recover the $L_p(\mathcal{M};\ell_\infty)$ norm of $x$, i.e.,
\[
  \norm{{\sup}_n^+ x_n}_{p}=\inf\{\norm{a}_{p}:a\in L_p(\mathcal{M})_+,-a\leq x_n\leq a,\forall n\in \mathbb{N}\}.
\]
We refer to \cite[Section 2]{MR2276775} for more details. We say that a family of linear maps $\Phi_n:L_p(\mathcal{M})\to L_p(\mathcal{M})$ is of \emph{strong type $(p,p)$} with constant $C$ if 
\[
  \norm{{\sup}_n^+ \Phi_n(x)}_{p}\leq C\norm{x}_p,\quad x\in L_p(\mathcal{M}).
\]

\subsection{Group von Neumann algebras, left Hilbert algebras and crossed products}
Let $C_c(G)$ be the vector space consisting of all continuous functions with compact supports. The vector space $C_c(G)$ is a Hilbert algebra with respect to the following product, involution and inner product:
\begin{align*}
  f_1*f_2(g)&=\int_G f_1(g_0)f_2(g_0^{-1}g)\dd g_0,\quad \forall f_1,f_2\in C_c(G),\\
  f^\#(g)&=\overline{f(g^{-1})},\quad \forall f\in C_c(G),\\
  \inner{f_1,f_2}&=\int_G f_1(g)\overline{f_2(g)}\dd g,\quad \forall f_1,f_2\in C_c(G).
\end{align*}\par

Let $(X,\mu)$ be a finite measure space and $G$ be a unimodular locally compact second countable group equipped with a Haar measure. Consider a $\mu$-preserving (anti-)action of $G$ on $X$ and denote the corresponding action on $L_\infty(X)$ by $\theta$. Let $C_c(G,L_\infty(X))$ be the vector space of all w*-continuous $L_\infty(X)$ valued functions on $G$ with compact supports. The vector space $C_c(G,L_\infty(X))$ is a Hilbert algebra with respect to the following product, involution and inner product:
\begin{align*}
  \xi*\eta(g)&=\int_{G}\theta_{g_0}(\xi(gg_0))\eta(g_0^{-1})\dd g_0,\quad \forall\xi,\eta\in C_c(G,L_\infty(X)),\\
  \xi^\#(g)&=\theta_{g}^{-1}(\overline{\xi(g^{-1})}),\quad \forall\xi\in C_c(G,L_\infty(X)),\\
   \inner{\xi,\eta}&=\int_{ G}\int_X \xi(g)(x)\overline{\eta(g)(x)}\dd\mu(x)\dd g,\quad \forall \xi,\eta\in C_c(G,L_\infty(X)).
\end{align*} 
\par
  We define $L_2(G, L_2(X))$ to be the space of all square-integrable functions that take values in $L_2(X)$. For the rest of the paper, we denote $L_2(G, L_2(X))$ by $L_2(L_2)$ for simplicity. The set of all right bounded vectors $\mathfrak{B}'$ and the set of all left bounded vectors $\mathfrak{B}$ are
  \begin{align*}
   \mathfrak{B}' &= \left\{ \eta \in L_2(L_2) : \sup_{\xi\in C_c(G,L_\infty(X))} \frac{\| \xi*\eta \|_{L_2(L_2)}}{\| \xi \|_{L_2(L_2)}} < \infty \right\}, \\
    \mathfrak{B} &= \left\{ \xi\in L_2(L_2) : \sup_{\eta\in \mathfrak{B}'} \frac{\| \xi*\eta \|_{L_2(L_2)}}{\| \eta \|_{L_2(L_2)}} < \infty \right\}.
  \end{align*}
 Note that to each $\eta \in \mathfrak{B}'$, there corresponds a unique bounded operator $\pi_r(\eta) \in B(L_2(L_2))$ such that  
\[
\pi_r(\eta)\xi = \xi * \eta, \quad \forall \xi \in C_c(G, L_\infty(X)),
\]
and to each $\xi \in \mathfrak{B}$, there corresponds a unique bounded operator $\pi_l(\xi) \in B(L_2(L_2))$ such that  
\[
\pi_l(\xi)\eta = \xi * \eta, \quad \forall \eta \in \mathfrak{B}'.
\]
Also observe that $C_c(G, L_\infty(X)) \subset \mathfrak{B}$. The crossed product $L_\infty(X) \rtimes_\theta G$ is defined as the von Neumann algebra generated by $\pi_l(C_c(G, L_\infty(X)))$ in $B(L_2(L_2))$. The sets $\mathfrak{B}$ and $\mathfrak{B}'$ satisfy  
\[
\pi_l(\mathfrak{B}) \subset L_\infty(X) \rtimes_\theta G \quad \text{and} \quad \pi_r(\mathfrak{B}') \subset (L_\infty(X) \rtimes_\theta G)'.
\]
Moreover, there exists a trace $\tau$ on the positive cone $(L_\infty(X) \rtimes_\theta G)_+$ satisfying
\[
  \tau(x) =
  \begin{cases}
    \| \xi \|_{L_2(L_2)}^{2}, & \text{if } x = \pi_l(\xi)^* \pi_l(\xi),\quad\xi \in \mathfrak{B}, \\
    +\infty,       & \text{otherwise}.
  \end{cases}
\]
We refer to \cite[Section 1, Chapter \RN{6}]{MR1943006} and \cite[Section 1, Chapter \RN{10}]{MR1943006} for more information on Hilbert algebras and crossed products.\par 

\section{Transference via cocycles and crossed product embeddings}\label{Sec: embeddeding vNa}
In this section, we establish the transference of Fourier multipliers via cocycles and prove Theorem \ref{Thm: Transference from L(H) to L(G)}. Consider a locally compact second countable unimodular group $G$ and a discrete group $H$. Suppose $\theta$ is a $\mu$-preserving action of $G$ on $(X,\mu)$, and let $\alpha : G \times X \to H$ be a measurable cocycle. We construct a special injective, trace-preserving $*$-homomorphism
\[
\iota : \mathcal{L}(G) \hookrightarrow L_\infty(X) \rtimes_\theta (H \times G),
\]
which encodes the cocycle data and intertwines the Fourier multipliers on $H$ with those on $G$.\par 

 By taking the product of the trivial action of $H$ and the action $\theta$, we obtain an action of $H \times G$ on $L_\infty(X)$, which we continue to denote by $\theta$ without ambiguity. Explicitly, the action is given by
\[
\theta_{(h, g)} f(x) = f(g^{-1} x), \quad \text{for all } (h, g) \in H \times G.
\]
Since $H$ acts trivially, the crossed product admits the decomposition
\[
L_\infty(X) \rtimes_\theta (H \times G) = (L_\infty(X) \rtimes_\theta G) \,\overline{\otimes}\, \mathcal{L}(H).
\]

We now proceed to construct the homomorphism $\iota$ in detail. Given a cocycle $\alpha : G \times X \to H$,  for each $g \in G$, define the measurable sets
\[
A_{h, g} \coloneqq \{x \in X : \alpha(g, x) = h\},\quad h\in H,
\]
so that the space $X$ admits a countable partition into disjoint Borel sets:
\begin{equation}\label{Eqn: partition X}
  X = \bigsqcup_{h \in H} A_{h, g}.
\end{equation}
Since the function $\gamma : X \times H \times G \to H$ given by $\gamma(x, h, g) = \alpha(g, x) h^{-1}$ is measurable, the level set $\gamma^{-1}(e_H)$ is measurable. Consequently, the function
\[
A(h, g, x) \coloneqq \mathbbm{1}_{A_{h, g}}(x),\quad (h,g,x)\in H\times G\times X,
\]
belongs to $L_0(H \times G \times X)$, the space of complex-valued measurable functions on $H \times G \times X$. We define the map
\begin{equation}\label{Def: definition of iota_0}
  \iota_0 : C_c(G) \to L_0(H \times G \times X), \quad \iota_0(f)(h,g,x) = f(x)A(h,g,x).
\end{equation}
In the sequel, consider the Hilbert algebra $C_c(H \times G, L_\infty(X))$ associated with the action $\theta$, and let $\mathfrak{B}$ and $\mathfrak{B}'$ denote the sets of all left- and right-bounded vectors in $L_2(H\times G, L_2(X))$, respectively. The following lemmas will be useful when passing to the operator algebraic setting.
\begin{lemma}\label{Lem: image of C_c(G) under iota_0}
 The image of $\iota_0$ lies in $\mathfrak{B} \cap \mathfrak{B}'$.
\end{lemma}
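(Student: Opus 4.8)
The plan is to show that $\iota_0(f)$, for $f \in C_c(G)$, defines both a left-bounded and a right-bounded vector in $L_2(H \times G, L_2(X))$ by exhibiting explicit bounds. First I would observe that $\iota_0(f)$ is genuinely an element of $L_2(H \times G, L_2(X))$: for fixed $g$, the sets $\{A_{h,g}\}_{h \in H}$ partition $X$ by \eqref{Eqn: partition X}, so $\sum_{h \in H} \int_X |f(x) A(h,g,x)|^2 \dd\mu(x) = \int_X |f(x)|^2 \dd\mu(x)$, which is finite and independent of $g$; integrating over the compact support of $f$ in the $G$-variable gives a finite $L_2(L_2)$-norm. The point of the partition is that although $\iota_0(f)$ is supported on all of $H \times \operatorname{supp} f$, for each $g$ it is ``concentrated'' on a single fiber $A_{\alpha(g,x),g}$ in a way that will let the convolution products telescope.

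Next I would compute the convolution $\iota_0(f) * \eta$ for $\eta \in C_c(H \times G, L_\infty(X))$ directly from the Hilbert algebra product on $C_c(H \times G, L_\infty(X))$, using the cocycle identity \eqref{Eqn: cocycle identity left} to simplify the indicator functions. The key algebraic fact is that $A(h, g, \cdot) \theta_{g}\big(A(h', g', \cdot)\big)$ picks out, via the cocycle relation $\alpha(gg', x) = \alpha(g, g'x)\alpha(g', x)$, exactly the condition that $h h' = \alpha(gg', x)$ when $h = \alpha(g, g'x)$ and $h' = \alpha(g', x)$ — i.e. the composition of cocycle values matches the group product in $H \times G$. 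This should show that the map $\eta \mapsto \iota_0(f) * \eta$ is essentially a ``twisted'' version of left convolution by $f$, and one obtains an estimate $\|\iota_0(f) * \eta\|_{L_2(L_2)} \leq C_f \|\eta\|_{L_2(L_2)}$ with $C_f$ controlled by, say, $\|f\|_1$ or $\|\lambda(f)\|_{\mathcal{L}(G)}$ times a constant depending on $\mu(X)$; this gives $\iota_0(f) \in \mathfrak{B}$. The argument for $\mathfrak{B}'$ is symmetric: one estimates $\|\xi * \iota_0(f)\|_{L_2(L_2)}$ for $\xi \in C_c(H \times G, L_\infty(X))$, again using the cocycle identity to collapse the indicators, and bounds it by $C_f' \|\xi\|_{L_2(L_2)}$.

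I expect the main obstacle to be bookkeeping the measure-theoretic and indicator-function manipulations cleanly — in particular, verifying that the formal convolution computation involving the infinite sum over $h \in H$ and the $\theta$-action on the $L_\infty(X)$ factors is justified (Fubini/Tonelli on $H \times G \times X$, interchange of sum and integral), and that the resulting operator really does extend boundedly rather than merely being defined on a dense domain. The partition property \eqref{Eqn: partition X} is what makes the sums effectively finite pointwise in $x$, so the interchanges should be harmless, but stating the estimate with the right constant (uniform in the matrix amplifications later needed for Theorem \ref{Thm: Transference from L(H) to L(G)}) requires care. A secondary point is confirming that $\iota_0(f)$ lies in the Hilbert space completion and not merely in $L_0$; this follows from the $L_2$-norm computation above. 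Once these bounds are in place, membership in both $\mathfrak{B}$ and $\mathfrak{B}'$ is immediate from the definitions of left- and right-bounded vectors.
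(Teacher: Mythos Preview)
Your proposal is correct and follows essentially the paper's approach: verify $\iota_0(f)\in L_2(L_2)$ via the partition \eqref{Eqn: partition X}, then bound $\|\iota_0(f)*\eta\|_{L_2(L_2)}$ and $\|\xi*\iota_0(f)\|_{L_2(L_2)}$ by $\|f\|_{L_1(G)}$ times the $L_2(L_2)$-norm of the other factor via Minkowski after collapsing the sum over $H$. One clarification worth making: the cocycle identity is \emph{not} actually needed for this lemma---the indicator $\mathbbm{1}_{A_{hh_0,gg_0}}(g_0^{-1}x)$ pins $h_0$ to the single value $h^{-1}\alpha(gg_0,g_0^{-1}x)$ purely by the partition property, and the cocycle relation enters only in Lemma~\ref{Lem: *-isomorphism from C_c(G)} for the homomorphism property; also, in the right-boundedness estimate the paper uses that the $G$-action on $(X,\mu)$ is measure-preserving in the final change of variables, which you should invoke explicitly.
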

\begin{proof}
  
To verify that $\iota_0(f) \in \mathfrak{B} \cap \mathfrak{B}'$ for $f \in C_c(G)$, observe that
\[
\begin{split}
\norm{\iota_0(f)}_{L_2(L_2)}^2 &= \sum_{h \in H} \int_G \int_X \abs{f(g) \mathbbm{1}_{A_{h, g}}(x)}^2 \, d\mu(x) \, \dd g \\
&= \sum_{h \in H} \int_G \abs{f(g)}^2 \mu(A_{h, g}) \, \dd g 
= \mu(X) \norm{f}_{L_2(G)}^2,
\end{split}
\]
so $\iota_0(f) \in L_2(L_2)$.\par 

Let $\eta \in L_2(L_2)$. We then compute the left action:
\[
    \begin{split}
      (\iota_0(f)*\eta)(h,g, x) &= \sum_{h_0\in H}\int_{G} (\theta_{(h_0, g_0)}(\iota_0(f)(hh_0,gg_0)) \eta(h_0^{-1},g_0^{-1}))(x) \dd g_0 \\
      &= \sum_{h_0\in H}\int_{G} f(gg_0)\mathbbm{1}_{A_{hh_0,gg_0}}(g_0^{-1}x) \eta(h_0^{-1},g_0^{-1})(x) \dd g_0 \\
      &= \sum_{h_0\in H}\int_{G} f(gg_0)\mathbbm{1}_{A_{hh_0,gg_0}}(g_0^{-1}x) \eta(\alpha(gg_0,g_0^{-1}x)^{-1}h,g_0^{-1})(x) \dd g_0 \\
      &= \int_G f(gg_0) \eta(\alpha(gg_0,g_0^{-1}x)^{-1}h,g_0^{-1})(x) \dd g_0 \\
      &= \int_G f(g_0) \eta(\alpha(g_0,g_0^{-1}gx)^{-1}h,g_0^{-1}g)(x) \dd g_0.
    \end{split}
  \]
Finally, applying the Minkowski inequality yields
\[
    \begin{split}
      \norm{\iota_0(f)*\eta}_{L_2(L_2)} &= \left( \sum_{h\in H}\int_{G\times X}\abs{\int_G f(g_0) \eta(\alpha(g_0,g_0^{-1}gx)^{-1}h,g_0^{-1}g)(x) \dd g_0 }^2 \dd\mu(x)\dd g \right)^\frac{1}{2} \\
      &\leq \int_G \abs{f(g_0)} \left( \sum_{h\in H}\int_{ G\times X} \abs{\eta(\alpha(g_0,g_0^{-1}gx)^{-1}h,g_0^{-1}g)(x)}^2 \dd\mu (x)\dd g \right)^\frac{1}{2} \dd g_0 \\
      &= \int_G \abs{f(g_0)} \left( \sum_{h\in H}\int_{ G\times X} \abs{\eta(h,g)(x)}^2 \dd\mu (x)\dd g \right)^\frac{1}{2} \dd g_0 \\
      &= \norm{\eta}_{L_2(L_2)} \norm{f}_{L_1(G)}.
    \end{split}
  \]
  This proves that $\iota_0(f) \in \mathfrak{B}$ and hence $\iota_0(C_c(G)) \subset \mathfrak{B}$.\par 
Now we verify that $\iota_0(f)\in \mathfrak{B}'$:
\[
    \begin{split}
      (\xi*\iota_0(f))(h,g, x) &= \sum_{h_0\in H}\int_{G} (\theta_{(h_0, g_0)}(\xi(hh_0,gg_0)) (\iota_0(f))(h_0^{-1},g_0^{-1}))(x) \dd g_0 \\
      &= \sum_{h_0\in H}\int_{G} \xi(hh_0,gg_0)(g_0^{-1}x)f(g_0^{-1})\mathbbm{1}_{A_{h_0^{-1},g_0^{-1}}}(x) \dd g_0 \\
      &= \sum_{h_0\in H}\int_{G} \xi(h\alpha(g_0^{-1},x)^{-1},gg_0)(g_0^{-1}x)f(g_0^{-1})\mathbbm{1}_{A_{h_0^{-1},g_0^{-1}}}(x) \dd g_0 \\
      &= \int_{G} \xi(h\alpha(g_0^{-1},x)^{-1},gg_0)(g_0^{-1}x)f(g_0^{-1}) \dd g_0 \\
      &= \int_{G} f(g_0) \xi(h\alpha(g_0,x)^{-1},gg_0^{-1})(g_0x) \dd g_0.
    \end{split}
  \]
Applying the Minkowski inequality again, we get
\[
\begin{split}
\norm{\xi*\iota_0(f)}_{L_2(L_2)} &= \left(\sum_{h\in H} \int_{ G\times X}\abs*{\int_{G}f(g_0) \xi(h\alpha(g_0,x)^{-1},gg_0^{-1})(g_0x) \dd g_0}^2 \dd\mu(x)\dd g \right)^{\frac{1}{2}} \\
      &\leq \int_G \abs{f(g_0)}\left( \sum_{h\in H}\int_{G\times X}\abs{ \xi(h\alpha(g_0,x)^{-1},gg_0^{-1})(g_0x)}^2 \dd \mu(x)\dd g \right)^\frac{1}{2} \dd g_0\\
       &=\int_G \abs{f(g_0)}\left( \sum_{h\in H}\int_{G\times X}\abs{ \xi(h,g)(g_0x)}^2 \dd \mu(x)\dd g \right)^\frac{1}{2} \dd g_0.
\end{split}
\]
Since the action of $G$ on $X$ preserves the measure $\mu$, we conclude
\[
\norm{\xi * \iota_0(f)}_{L_2(L_2)} \leq \norm{\xi}_{L_2(L_2)} \norm{f}_{L_1(G)}.
\]
Therefore, $\iota_0(f) \in \mathfrak{B}'$, and we conclude that $\iota_0(C_c(G)) \subset \mathfrak{B} \cap \mathfrak{B}'$. This completes the proof of Lemma \ref{Lem: image of C_c(G) under iota_0}.
\end{proof}

\begin{lemma}\label{Lem: *-isomorphism from C_c(G)}
	The map $\iota_0:C_c (G) \to \mathfrak{B} \cap \mathfrak{B}'$ is an injective $*$-homomorphism.
\end{lemma}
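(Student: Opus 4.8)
The plan is to check, one at a time, the four properties defining an injective $*$-homomorphism between the Hilbert algebras $C_c(G)$ and $C_c(H\times G,L_\infty(X))$ (the latter restricted to $\mathfrak{B}\cap\mathfrak{B}'$, where the image already lands by Lemma~\ref{Lem: image of C_c(G) under iota_0}): linearity, injectivity, multiplicativity, and compatibility with the involutions. Linearity is immediate from \eqref{Def: definition of iota_0}. For injectivity, if $\iota_0(f)=0$ in $L_2(L_2)$ then for a.e.\ $g\in G$ and every $h\in H$ we have $f(g)\mathbbm{1}_{A_{h,g}}(x)=0$ for a.e.\ $x$; summing over $h\in H$ and invoking the partition \eqref{Eqn: partition X} forces $f(g)=0$ for a.e.\ $g$, hence $f\equiv0$ by continuity.

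For multiplicativity I would expand $\iota_0(f_1)*\iota_0(f_2)$ with the product formula of the Hilbert algebra $C_c(H\times G,L_\infty(X))$, in the same fashion as the computation of $\iota_0(f)*\eta$ in the preceding proof. The $(h_0,g_0)$-summand then carries the factor $\mathbbm{1}_{A_{hh_0,gg_0}}(g_0^{-1}x)\,\mathbbm{1}_{A_{h_0^{-1},g_0^{-1}}}(x)$. The second indicator forces $h_0=\alpha(g_0^{-1},x)^{-1}$; then, rewriting $\alpha(gg_0,g_0^{-1}x)=\alpha(g,x)\alpha(g_0,g_0^{-1}x)$ via \eqref{Eqn: cocycle identity left} and using the consequences $\alpha(e,x)=e_H$ and $\alpha(g_0,g_0^{-1}x)=\alpha(g_0^{-1},x)^{-1}=h_0$, the first indicator forces $\alpha(g,x)=h$, i.e.\ $\mathbbm{1}_{A_{h,g}}(x)=1$. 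Thus the $h_0$-sum collapses to the single surviving term and
\[
\iota_0(f_1)*\iota_0(f_2)(h,g,x)=\mathbbm{1}_{A_{h,g}}(x)\int_G f_1(gg_0)f_2(g_0^{-1})\dd g_0=\mathbbm{1}_{A_{h,g}}(x)\,(f_1*f_2)(g),
\]
where the last step is the change of variables $g_0\mapsto g^{-1}g_0$ together with unimodularity of $G$. The right-hand side is precisely $\iota_0(f_1*f_2)(h,g,x)$.

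For the involution I would expand
\[
\iota_0(f)^{\#}(h,g,x)=\theta_{(h,g)}^{-1}\big(\overline{\iota_0(f)(h^{-1},g^{-1})}\big)(x)=\overline{f(g^{-1})}\,\mathbbm{1}_{A_{h^{-1},g^{-1}}}(gx),
\]
and then observe, again from \eqref{Eqn: cocycle identity left}, that $\alpha(g^{-1},gx)=\alpha(g,x)^{-1}$, so $\mathbbm{1}_{A_{h^{-1},g^{-1}}}(gx)=\mathbbm{1}_{A_{h,g}}(x)$. Hence $\iota_0(f)^{\#}(h,g,x)=\overline{f(g^{-1})}\,\mathbbm{1}_{A_{h,g}}(x)=\iota_0(f^{\#})(h,g,x)$.

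The technical heart — and the step most exposed to indexing and inversion errors — is the collapse of the $h_0$-sum in the multiplicativity computation: one must verify carefully that the cocycle identity makes the two indicator functions simultaneously nonzero precisely for $h_0=\alpha(g_0^{-1},x)^{-1}$ and only on the set $A_{h,g}$. Once this is in hand, every remaining manipulation is a routine unimodular change of variables, and the $*$-compatibility reduces to the single identity $\alpha(g^{-1},gx)=\alpha(g,x)^{-1}$.
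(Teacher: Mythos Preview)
Your proof is correct and follows essentially the same route as the paper: both arguments reduce multiplicativity to the cocycle identity \eqref{Eqn: cocycle identity left} in order to collapse the $h_0$-sum of indicator functions to $\mathbbm{1}_{A_{h,g}}(x)$, and both reduce the involution to the consequence $\alpha(g^{-1},gx)=\alpha(g,x)^{-1}$. The only cosmetic difference is that the paper performs the change of variables $g_0\mapsto g_0^{-1}g$ \emph{before} analyzing the indicators (obtaining $\mathbbm{1}_{A_{h_0,g_0}}(g_0^{-1}gx)\,\mathbbm{1}_{A_{h_0^{-1}h,\,g_0^{-1}g}}(x)$ and then summing over $h_0$ via the partition \eqref{Eqn: partition X}), whereas you collapse the sum first and change variables in the remaining scalar integral afterwards.
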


\begin{proof}
	Let $g \in G$, $h \in H$, and $x \in X$. By the cocycle identity \eqref{Eqn: cocycle identity left}, we have
	\[
	\alpha(g^{-1}, gx) \alpha(g, x) = \alpha(e_G, x) = e_H.
	\]
	This implies that
	\[
	x \in A_{h, g}  \iff  gx \in A_{h^{-1}, g^{-1}},
	\]
	and hence,
	\[
	\mathbbm{1}_{A_{h, g}}(x) = \mathbbm{1}_{A_{h, g}}(x) \mathbbm{1}_{A_{h^{-1}, g^{-1}}}(gx).
	\]
	Now let $f \in C_c(G)$. We compute
	\[
	\begin{split}
		\iota_0(f^\#)(h, g, x) &= f^\#(g) \mathbbm{1}_{A_{h, g}}(x) \\
		&= \overline{f(g^{-1})} \mathbbm{1}_{A_{h,g}}(x) \mathbbm{1}_{A_{h^{-1}, g^{-1}}}(gx) \\
		&= \overline{f(g^{-1})} \mathbbm{1}_{A_{h^{-1}, g^{-1}}}(gx) \mathbbm{1}_{A_{h, g}}(g^{-1} gx) \\
		&= \theta^{-1}_{(h, g)} \left( \overline{f(g^{-1})} \mathbbm{1}_{A_{h^{-1}, g^{-1}}}(\cdot) \mathbbm{1}_{A_{h, g}}(g^{-1} \cdot) \right)(x) \\
		&=  \iota_0(f)^\#(h, g, x),
	\end{split}
	\]
	showing that $ \iota_0(f^\#) =  \iota_0(f)^\#$.
	
	We now verify that $ \iota_0$ respects convolution. For $f_1, f_2 \in C_c(G)$ and $(h,g,x)\in H\times G\times X$, we have
	\[
	\begin{split}
		( \iota_0(f_1)* \iota_0(f_2))(h,g, x) &= \sum_{h_0\in H}\int_{G}(\theta_{(h_0, g_0)}(( \iota_0(f_1))(hh_0,gg_0))( \iota_0(f_2))(h_0^{-1},g_0^{-1}))(x)\dd g_0 \\
		&= \sum_{h_0\in H}\int_{G} f_1(gg_0)\mathbbm{1}_{A_{hh_0,gg_0}}(g_0^{-1}x) f_2(g_0^{-1})\mathbbm{1}_{A_{h_0^{-1},g_0^{-1}}}(x)\dd g_0 \\
		&= \sum_{h_0\in H}\int_{G} f_1(g_0)\mathbbm{1}_{A_{h_0,g_0}}(g_0^{-1}gx) f_2(g_0^{-1}g)\mathbbm{1}_{A_{h_0^{-1}h,g_0^{-1}g}}(x)\dd g_0.
	\end{split}
	\]
	Recall from the definition of the sets $A_{h, g}$ that
	\begin{itemize}
		\item $g_0^{-1} g x \in A_{h_0, g_0} \iff x \in \{y\in X : \alpha(g_0, g_0^{-1}gy) = h_0\}$;
		\item $x \in A_{h_0^{-1} h, g_0^{-1} g} \iff x \in \{y\in X : \alpha(g_0^{-1}g, y) = h_0^{-1}h\}$.
	\end{itemize}
	Therefore, whenever both conditions hold, the cocycle identity \eqref{Eqn: cocycle identity left} yields
	\[
	h = \alpha(g_0, g_0^{-1} g x) \alpha(g_0^{-1} g, x) = \alpha(g, x),
	\]
	implying $x \in A_{h, g}$. Conversely, if $x \in A_{h, g}$ and $x \in A_{h_0^{-1} h, g_0^{-1} g}$, then
	\[
	\alpha(g_0, g_0^{-1} g x) = \alpha(g, x) \alpha(g_0^{-1} g, x)^{-1} = h (h_0^{-1} h)^{-1} = h_0,
	\]
	which implies $g_0^{-1} g x \in A_{h_0, g_0}$. Hence, we conclude
	\[
	x \in A_{h, g} \text{ and } x \in A_{h_0^{-1} h, g_0^{-1} g} \iff g_0^{-1} g x \in A_{h_0, g_0} \text{ and } x \in A_{h_0^{-1} h, g_0^{-1} g},
	\]
	and therefore,
	\[
	\mathbbm{1}_{A_{h_0, g_0}}(g_0^{-1} g x) \mathbbm{1}_{A_{h_0^{-1} h, g_0^{-1} g}}(x) = \mathbbm{1}_{A_{h_0^{-1} h, g_0^{-1} g}}(x) \mathbbm{1}_{A_{h, g}}(x).
	\]
	This leads to:\[
	\begin{split}
		( \iota_0(f_1)* \iota_0(f_2))(h,g, x) &= \sum_{h_0\in H}\int_{G} f_1(g_0)\mathbbm{1}_{A_{h_0^{-1}h,g_0^{-1}g}}(x)f_2(g_0^{-1}g)\mathbbm{1}_{A_{h,g}}(x)\dd g_0 \\
		&= \mathbbm{1}_{A_{h,g}}(x)\mathbbm{1}_{X}(x)\int_{G} f_1(g_0)f_2(g_0^{-1}g)\dd g_0 \\
		&= f_1*f_2(g)\mathbbm{1}_{A_{h,g}}(x) = ( \iota_0(f_1*f_2))(h,g, x),
	\end{split}
	\]
	proving that $ \iota_0$ is a $*$-homomorphism.\par 
	
	The injectivity of $\iota_0$ follows from the disjoint partition of $X$ in \eqref{Eqn: partition X}, and the linearity is immediate. This completes the proof  of Lemma \ref{Lem: *-isomorphism from C_c(G)}.
\end{proof}

We next verify that the map $\pi_l$ respects the $*$-structure and is faithful on $\mathfrak{B} \cap \mathfrak{B}'$. The following lemma is standard and we outline its proof for the convenience of the reader.

\begin{lemma}\label{Lem: *-representation of B cap B'}
The restriction of the map $\pi_l$ on $\mathfrak{B} \cap \mathfrak{B}'$ is a faithful $*$-homomorphism.
\end{lemma}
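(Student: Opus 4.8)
**Proof plan for Lemma \ref{Lem: *-representation of B cap B'}.**

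The plan is to verify the two assertions separately, relying on the general theory of left Hilbert algebras as developed in Takesaki's book, which the paper cites. Recall that for the Hilbert algebra $\mathfrak{A}_0 = C_c(H\times G, L_\infty(X))$ with completion $L_2(L_2)$, the left von Neumann algebra is $\mathcal{R} = \pi_l(\mathfrak{B})'' = L_\infty(X)\rtimes_\theta(H\times G)$, and for right-bounded $\eta \in \mathfrak{B}'$ one has $\pi_r(\eta) \in \mathcal{R}'$. The two operators are compatible in the sense that $\pi_l(\xi)\eta = \pi_r(\eta)\xi$ for $\xi \in \mathfrak{B}$, $\eta \in \mathfrak{B}'$; this is the basic intertwining identity that makes everything work.

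First I would prove that $\pi_l$ is a $*$-homomorphism on $\mathfrak{B}\cap\mathfrak{B}'$. For multiplicativity, take $\xi_1,\xi_2 \in \mathfrak{B}\cap\mathfrak{B}'$; since $\mathfrak{B}$ is a subalgebra of $L_2(L_2)$ under convolution (with $\xi_1 * \xi_2 \in \mathfrak{B}$ and $\pi_l(\xi_1*\xi_2) = \pi_l(\xi_1)\pi_l(\xi_2)$ — this is part of the left Hilbert algebra axioms, extended from $\mathfrak{A}_0$ to $\mathfrak{B}$), the identity $\pi_l(\xi_1 * \xi_2) = \pi_l(\xi_1)\pi_l(\xi_2)$ holds. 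For the $*$-structure one must show $\pi_l(\xi)^* = \pi_l(\xi^\#)$ when both $\xi,\xi^\# \in \mathfrak{B}$; here I would use that for $\eta, \zeta \in \mathfrak{A}_0$ (which lies in both $\mathfrak{B}$ and $\mathfrak{B}'$) one has $\langle \pi_l(\xi)\eta, \zeta\rangle = \langle \xi * \eta, \zeta\rangle = \langle \eta, \xi^\# * \zeta\rangle = \langle \eta, \pi_l(\xi^\#)\zeta\rangle$, where the middle equality is the associativity/adjoint relation for the Hilbert algebra inner product; since $\mathfrak{A}_0$ is dense in $L_2(L_2)$ this gives $\pi_l(\xi)^* = \pi_l(\xi^\#)$ on all of $L_2(L_2)$.

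Next I would establish faithfulness. Suppose $\xi \in \mathfrak{B}\cap\mathfrak{B}'$ with $\pi_l(\xi) = 0$. Then for every $\eta \in \mathfrak{B}'$ we have $0 = \pi_l(\xi)\eta = \pi_r(\eta)\xi$, using the intertwining identity (first established for $\eta \in \mathfrak{A}_0$ by the defining formulas and then extended to $\eta \in \mathfrak{B}'$ by density, since $\pi_l(\xi)$ is bounded). Taking $\eta$ to range over an approximate identity for the Hilbert algebra — concretely, a net $(e_j) \subset \mathfrak{A}_0$ with $\pi_r(e_j) \to 1$ strongly, which exists by the general theory (or directly: bump functions supported near $e_H \times e_G$ tensored with $\mathbf 1_X$) — we get $\xi = \lim_j \pi_r(e_j)\xi = \lim_j \pi_l(\xi) e_j = 0$. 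Hence $\pi_l$ is injective on $\mathfrak{B}\cap\mathfrak{B}'$, and since it is already shown to be a $*$-homomorphism between the relevant $*$-algebras it is automatically isometric on the $C^*$-level, i.e.\ faithful in the stated sense.

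The only mildly delicate point — the \emph{main obstacle} — is the careful bookkeeping of which identities ($\pi_l$ multiplicative, $\pi_l(\xi)\eta = \pi_r(\eta)\xi$, existence of the approximate identity) are available already at the level of $\mathfrak{A}_0 = C_c(H\times G,L_\infty(X))$ versus needing extension to all of $\mathfrak{B}\cap\mathfrak{B}'$; each extension is a routine density-plus-boundedness argument but one must invoke the correct statement from \cite[Chapters VI, X]{MR1943006}. Since the lemma is explicitly flagged as standard, I would keep this brief: cite the left Hilbert algebra structure, note the intertwining identity, and run the approximate-identity argument for faithfulness, leaving the density details to the reader.
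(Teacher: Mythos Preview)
Your proposal is correct and follows essentially the same route as the paper: both argue via the general theory of left Hilbert algebras from Takesaki \cite{MR1943006}, citing the multiplicativity of $\pi_l$ on $\mathfrak{B}$ (Lemma~1.8$'$, Chapter~VI) and handling the $*$-preservation by a density argument (Lemma~1.9, Chapter~VI). The only cosmetic difference is that the paper dispatches faithfulness in one line as ``immediate from the definition,'' whereas you spell out an approximate-identity argument; your final remark about automatic isometry at the $C^*$-level is unnecessary (and slightly off, since $\mathfrak{B}\cap\mathfrak{B}'$ carries no obvious $C^*$-norm), but the injectivity itself is already established by your preceding argument.
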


\begin{proof}
Let $a_1, a_2 \in \mathfrak{B}$. By \cite[Lemma 1.8', Chapter \RN{6}]{MR1943006}, we have
\[
\pi_l(a_1 a_2) = \pi_l(\pi_l(a_1) a_2) = \pi_l(a_1) \pi_l(a_2).
\]

Let $a \in \mathfrak{B} \cap \mathfrak{B}'$. By the density of $C_c(H \times G, L_\infty(X))$ in the $L_2(L_2)$ norm and applying \cite[Lemma 1.9, Chapter \RN{6}]{MR1943006}, we obtain that for any $\eta, \xi \in L_2(L_2)$,
\[
\begin{split}
\inner{\pi_l(a^\#) \eta, \xi}=\inner{\pi_l(a)^* \eta, \xi}.
\end{split}
\]
Therefore, $\pi_l(a^\#) = \pi_l(a)^*$, which shows that $\pi_l$ is a $*$-homomorphism. Its faithfulness is immediate from the definition.
\end{proof}

Now we are ready to establish the desired embedding of $\mathcal{L}(G)$ into $L_\infty(X) \rtimes_\theta (H \times G)$. Consider 
\begin{equation}\label{Def: definition of iota}
  \iota:\lambda(C_c(G)) \to L_\infty(X) \rtimes_\theta (H \times G),\quad  \lambda(f) \mapsto \pi_l \circ \iota_0(f).
\end{equation}
This map is well-defined by Lemma~\ref{Lem: image of C_c(G) under iota_0}.

\begin{proposition}\label{Prop: *-representation from LG to crossed product}
The map $ \iota$ extends to a trace-preserving, weak*-continuous, injective $*$-homomorphism from $\mathcal{L}(G)$ to $L_\infty(X) \rtimes_\theta (H \times G)$. 
\end{proposition}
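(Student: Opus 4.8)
The plan is to establish the statement in three moves. First, $\iota$ as defined in~\eqref{Def: definition of iota} is a genuine $*$-homomorphism on the weak*-dense $*$-subalgebra $\lambda(C_c(G))$: the left regular representation $\lambda$ is faithful on the Hilbert algebra $C_c(G)$, so $\lambda(f)\mapsto\pi_l(\iota_0(f))$ is unambiguously defined, and it is multiplicative and $*$-preserving because $\iota_0\colon C_c(G)\to\mathfrak B\cap\mathfrak B'$ is an injective $*$-homomorphism by Lemma~\ref{Lem: *-isomorphism from C_c(G)} while $\pi_l$ restricted to $\mathfrak B\cap\mathfrak B'$ is a $*$-homomorphism by Lemma~\ref{Lem: *-representation of B cap B'}. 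Second, I would check that $\iota$ is trace preserving on this core. Third, I would promote it to a normal injective $*$-homomorphism on all of $\mathcal L(G)$ by compressing $N:=L_\infty(X)\rtimes_\theta(H\times G)$ to a suitable invariant subspace; weak*-continuity and injectivity will then fall out of the faithfulness of the two traces.

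For the trace computation, write $\tau$ for the canonical trace on $\mathcal L(G)$, normalised by $\tau(\lambda(f)^*\lambda(f))=\norm{f}_{L_2(G)}^2$, and $\tau_N$ for the trace on $N$ recalled in Section~\ref{Sec: embeddeding vNa}, which satisfies $\tau_N(\pi_l(\xi)^*\pi_l(\xi))=\norm{\xi}_{L_2(L_2)}^2$ for $\xi\in\mathfrak B$. The norm identity already obtained in the proof of Lemma~\ref{Lem: image of C_c(G) under iota_0} gives $\norm{\iota_0(f)}_{L_2(L_2)}^2=\mu(X)\norm{f}_{L_2(G)}^2$; after normalising $\mu$ to a probability measure (the general finite case follows by rescaling $\tau_N$ by $\mu(X)^{-1}$), this reads $\tau_N(\iota(\lambda(f))^*\iota(\lambda(f)))=\tau(\lambda(f)^*\lambda(f))$, and polarisation yields $\tau_N(\iota(a)^*\iota(b))=\tau(a^*b)$ for all $a,b\in\lambda(C_c(G))$. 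In particular $\iota$ preserves the $L_2$-norm on $\lambda(C_c(G))$, hence extends to an isometry $U\colon L_2(\mathcal L(G))\to L_2(N)$ with $U\lambda(f)=\iota(\lambda(f))U$ on $L_2(\mathcal L(G))$ for every $f\in C_c(G)$ (verify on the dense subspace $\lambda(C_c(G))$ and extend by boundedness). Using $\iota((a^*a)^n)=\iota(a^*a)^n$ together with the identity $\norm{y}_\infty=\lim_{n\to\infty}\norm{y^n}_2^{1/n}$ for positive $y$ in a tracial von Neumann algebra, one also gets $\norm{\iota(a)}_N=\norm{a}_{\mathcal L(G)}$ for $a\in\lambda(C_c(G))$.

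Now the extension. Set $M_0:=(\pi_l(\iota_0(C_c(G))))''\subseteq N$ and $\mathcal H_0:=\overline{U(L_2(\mathcal L(G)))}$. Since $\iota(\lambda(f))$ and its adjoint carry $\mathcal H_0$ into $\mathcal H_0$, the orthogonal projection $p$ onto $\mathcal H_0$ lies in $M_0'$, and its central support in $M_0$ equals $1$: if a central projection $z\in M_0$ killed $\mathcal H_0$, then $z\,\pi_l(\iota_0(f))$ would have zero $L_2(N)$-vector, hence vanish, for every $f$, so $zM_0=0$ and $z=0$. Therefore $q\mapsto q|_{\mathcal H_0}$ is a faithful normal $*$-isomorphism of $M_0$ onto the von Neumann algebra $M_0p$ acting on $\mathcal H_0$; conjugating it by the unitary $U\colon L_2(\mathcal L(G))\to\mathcal H_0$ and using $U\lambda(f)=\iota(\lambda(f))U$ and $U^*U=1$ produces a normal $*$-isomorphism of $M_0$ onto $\mathcal L(G)$ sending $\pi_l(\iota_0(f))$ to $\lambda(f)$. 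Its inverse is precisely the sought weak*-continuous injective $*$-homomorphism $\mathcal L(G)\hookrightarrow N$ extending~\eqref{Def: definition of iota}. Trace preservation on all of $\mathcal L(G)$ then follows by normality, since $\tau_N\circ\iota=\tau$ already on the weak*-dense $*$-subalgebra spanned by $\lambda(C_c(G)*C_c(G))$; and injectivity is in any case automatic, as $\iota(x)=0$ forces $\tau(x^*x)=\tau_N(\iota(x^*x))=0$, whence $x=0$. (Alternatively, once trace preservation on the core is in hand, one may simply invoke the standard functoriality of tracial von Neumann algebras under trace-preserving $*$-homomorphisms to obtain the normal extension directly.)

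At this stage I anticipate no serious obstacle: the substantive work—encoding the cocycle identity~\eqref{Eqn: cocycle identity left} into the partition functions $A_{h,g}$ so that $\iota_0$ is multiplicative—has already been carried out in Lemmas~\ref{Lem: image of C_c(G) under iota_0}--\ref{Lem: *-representation of B cap B'}. The only genuinely new ingredient here is the $L_2$-norm identity underpinning trace preservation, and the mildly technical points are the verification that the compression is faithful (equivalently $z(p)=1$) and the bookkeeping of the normalisation constant $\mu(X)$.
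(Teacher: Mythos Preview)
Your proposal is correct and follows essentially the same approach as the paper: establish that $\iota$ is a $*$-homomorphism on $\lambda(C_c(G))$ via Lemmas~\ref{Lem: *-isomorphism from C_c(G)} and~\ref{Lem: *-representation of B cap B'}, verify trace preservation through the $L_2$-norm identity $\norm{\iota_0(f)}_{L_2(L_2)}^2=\mu(X)\norm{f}_{L_2(G)}^2$, and then extend. The paper dispatches the extension in one line (``by a standard argument''), whereas you supply a concrete realisation of that argument via the GNS isometry $U$ and the central-support computation; this is a legitimate and well-executed way to make the standard step explicit.
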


\begin{proof}
By Lemmas \ref{Lem: *-isomorphism from C_c(G)} and \ref{Lem: *-representation of B cap B'}, $\iota$ is an injective $*$-homomorphism on $\lambda(C_c(G))$
 since $\pi_l:\mathfrak{B}\cap\mathfrak{B}'\to L_\infty(X) \rtimes_\theta (H \times G)$ and $\iota_0:C_c(G)\to \mathfrak{B}\cap\mathfrak{B}'$ are all injective $*$-homomorphisms.\par 

Now, without loss of generality, assume that $\mu(X) = 1$. From the definition of the trace $\tau$ and Lemmas \ref{Lem: *-isomorphism from C_c(G)} and \ref{Lem: image of C_c(G) under iota_0}, we find
\[
    \begin{split}
      \tau( \iota(|\lambda(f)|^2))=&\tau(\pi_l(\iota_0(f))^*\pi_l(\iota_0(f)))=\norm{\iota_0(f)}^2\\
      =&\sum_{h\in H}\int_{G}\int_X\overline{\iota_0(f)(h,g, x)}\iota_0(f)(h,g, x)\dd\mu(x)\dd g\\
      =&\sum_{h\in H}\int_{G}\int_X\overline{f(g)}\mathbbm{1}_{A_{h,g}}(x)f(g)\mathbbm{1}_{A_{h,g}}(x)\dd\mu(x)\dd g\\
      =&\sum_{h\in H}\int_{G}\overline{f(g)}f(g)\mu(A_{h,g})\dd g\\
      =&\int_{G}\overline{f(g)}f(g)\dd g=\tau_G(|\lambda(f)|^2)=\tau_G(|\lambda(f)|^2).
    \end{split}
  \]
Hence, $ \iota$ preserves the trace on $\lambda(C_c(G))$. Then, by a standard argument, we conclude that $ \iota$ extends to a trace-preserving, weak*-continuous, injective $*$-homomorphism from $ \mathcal{L}(G)$ to $L_\infty(X) \rtimes_\theta (H \times G)$.
\end{proof}\par

The following key result describes how symbols on $H$ can be transferred to symbols on $G$ through the cocycle $\alpha$. In the following, let $\mathbb{E}=\iota^*$ be the conditional expectation from $(L^\infty(X)\rtimes G)\,\overline{\otimes}\,\mathcal{L}(H)$ onto $\mathcal{L}(G)$. Note that our construction of $\iota$ and $\mathbb E$ relies heavily on the discreteness of $H$, which also leads to the same discreteness assumptions in the following theorem as well as in Theorem~\ref{Thm: Transference from L(H) to L(G)}. Recall that for a symbol $m \in \ell_\infty(H)$, we consider the transferred symbol $\widetilde{m} \in L_\infty(G)$ given by
\[
\widetilde{m}(g) = \frac{1}{\mu(X)} \int_X m(\alpha(g, x)) \, d\mu(x), \quad g \in G.
\]
\begin{theorem}\label{Thm: multiplier diagram}
The following diagram commutes:
\begin{center}
\begin{tikzcd}[column sep={0.5em}]
\lambda(C_c(G)) \arrow[r, "\iota"] \arrow[d, "T_{\widetilde{m}}"'] &[20] \pi_l(\mathfrak{B}) \arrow[d, "\mathrm{id} \otimes T_m"] \arrow[r,phantom,"\subset"] & {(L^\infty(X)\rtimes G)\,\overline{\otimes}\,\mathcal{L}(H)} \\
\mathcal{L}(G)                                                      &[20] \pi_l(\mathfrak{B}) \arrow[l, "\mathbb{E}"] \arrow[r,phantom,"\subset"]              & {(L^\infty(X)\rtimes G)\,\overline{\otimes}\,\mathcal{L}(H)}
\end{tikzcd}
\end{center}
i.e., the identity $T_{\widetilde{m}}=\mathbb{E}\circ(\id \otimes T_m) \circ  \iota$ holds on $\lambda(C_c(G))$.
\end{theorem}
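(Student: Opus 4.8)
The plan is to verify the operator identity $T_{\widetilde m}=\mathbb{E}\circ(\mathrm{id}\otimes T_m)\circ\iota$ on $\lambda(C_c(G))$ by pairing both sides against $\lambda(f_2)$, $f_2\in C_c(G)$, using the trace on $\mathcal{L}(G)$. Fix $f_1,f_2\in C_c(G)$ and write $\mathcal M=L_\infty(X)\rtimes_\theta(H\times G)=(L_\infty(X)\rtimes_\theta G)\,\overline{\otimes}\,\mathcal{L}(H)$. Since $\iota$ is a trace-preserving injective $*$-homomorphism (Proposition~\ref{Prop: *-representation from LG to crossed product}), it extends to an isometric embedding $L_2(\mathcal{L}(G))\hookrightarrow L_2(\mathcal M)$ whose adjoint (for the traces) is $\mathbb{E}$; moreover $\mathrm{id}\otimes T_m$ is bounded on $L_2(\mathcal M)$ with norm $\le\|m\|_\infty$, so both $T_{\widetilde m}(\lambda(f_1))=\lambda(\widetilde m f_1)$ and $\mathbb{E}\big((\mathrm{id}\otimes T_m)(\iota(\lambda(f_1)))\big)$ are well-defined elements of $L_2(\mathcal{L}(G))$. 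As $\lambda(C_c(G))$ is dense in $L_2(\mathcal{L}(G))$, it suffices to show these two elements have the same $L_2$-inner product with $\lambda(f_2)$ for every $f_2\in C_c(G)$; by the defining property of $\mathbb{E}=\iota^*$ and the fact that $\iota$ is isometric, this amounts to the identity
\[
\tau_G\big(T_{\widetilde m}(\lambda(f_1))\,\lambda(f_2)^*\big)\;=\;\big\langle(\mathrm{id}\otimes T_m)(\iota(\lambda(f_1))),\,\iota(\lambda(f_2))\big\rangle_{L_2(\mathcal M)}.
\]

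For the left-hand side, a direct computation with the canonical trace gives $\tau_G(\lambda(\widetilde m f_1)\lambda(f_2)^*)=\int_G\widetilde m(g)f_1(g)\overline{f_2(g)}\,\dd g$. For the right-hand side, recall from Lemma~\ref{Lem: image of C_c(G) under iota_0} that $\iota(\lambda(f_i))=\pi_l(\iota_0(f_i))$, which under the standard identification $L_2(\mathcal M)=L_2(H\times G,L_2(X))$ (the completion of the Hilbert algebra, on which $\pi_l$ acts and for which $\pi_l(\xi)\leftrightarrow\xi$ when $\xi\in\mathfrak B$) corresponds to the vector $\iota_0(f_i)(h,g,x)=f_i(g)\mathbbm 1_{A_{h,g}}(x)$. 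Under the tensor decomposition $L_2(\mathcal M)=L_2(L_\infty(X)\rtimes_\theta G)\,\overline{\otimes}_2\,\ell_2(H)$, the operator $\mathrm{id}\otimes T_m$ is the diagonal multiplier acting on the $\ell_2(H)$-factor by $\delta_h\mapsto m(h)\delta_h$; hence it sends $\iota_0(f_1)$ to the vector $\iota_0^m(f_1)(h,g,x)=m(h)f_1(g)\mathbbm 1_{A_{h,g}}(x)$. Repeating the Minkowski estimates of Lemma~\ref{Lem: image of C_c(G) under iota_0}, now with an extra factor bounded by $\|m\|_\infty$, shows $\iota_0^m(f_1)\in\mathfrak B$, so this image is genuinely $\pi_l(\iota_0^m(f_1))\in\pi_l(\mathfrak B)$, consistent with the diagram.

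It then remains to compute $\big\langle\iota_0^m(f_1),\iota_0(f_2)\big\rangle$ in $L_2(H\times G,L_2(X))$. Using $\mathbbm 1_{A_{h,g}}^2=\mathbbm 1_{A_{h,g}}$ together with the partition $X=\bigsqcup_{h}A_{h,g}$ from \eqref{Eqn: partition X},
\[
\big\langle\iota_0^m(f_1),\iota_0(f_2)\big\rangle=\sum_{h\in H}\int_G m(h)f_1(g)\overline{f_2(g)}\,\mu(A_{h,g})\,\dd g=\int_G f_1(g)\overline{f_2(g)}\Big(\int_X m(\alpha(g,x))\,\dd\mu(x)\Big)\dd g,
\]
which equals $\int_G\widetilde m(g)f_1(g)\overline{f_2(g)}\,\dd g$ because $\mu(X)=1$ in the construction of $\iota$. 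This coincides with the left-hand side computed above, which proves $T_{\widetilde m}(\lambda(f_1))=\mathbb{E}\big((\mathrm{id}\otimes T_m)(\iota(\lambda(f_1)))\big)$ and hence the commutativity of the diagram.

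The only genuinely delicate point is the interpretation of $\mathrm{id}\otimes T_m$: at the von Neumann algebra level it is merely densely defined, so one must descend to $L_2(\mathcal M)$, where it is bounded, and justify that it still acts as the claimed diagonal scaling even though $\iota_0(f_1)$ has infinite support in the $H$-variable — this is where the discreteness of $H$ and the tensor decomposition $\mathcal M=(L_\infty(X)\rtimes_\theta G)\,\overline{\otimes}\,\mathcal{L}(H)$ are used. Once this is in place, everything else is routine bookkeeping with indicator functions, the cocycle partition, and the trace formulas on $\mathcal{L}(G)$ and on the crossed product.
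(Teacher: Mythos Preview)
Your proof is correct and follows essentially the same approach as the paper: both arguments pair against $\lambda(f_2)$ via the trace, identify $(\mathrm{id}\otimes T_m)\circ\iota(\lambda(f_1))$ with $\pi_l$ of the vector $m(h)f_1(g)\mathbbm{1}_{A_{h,g}}(x)$ (your $\iota_0^m(f_1)$, the paper's $\widetilde{T_m}(\iota_0(f_1))$), verify that this vector lies in $\mathfrak B$ via the same Minkowski estimate (your repetition of Lemma~\ref{Lem: image of C_c(G) under iota_0} is exactly the paper's Lemma~\ref{Lem: Fourier multiplier on L2(L2)}), and then collapse the sum over $h$ using the partition $X=\bigsqcup_h A_{h,g}$. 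The only organizational difference is that the paper first extracts a general formula for $\mathbb{E}(\pi_l(\xi))$ and then specializes, whereas you compute the single pairing directly; also note the paper keeps the factor $1/\mu(X)$ throughout rather than normalizing $\mu(X)=1$.
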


To connect the Fourier multiplier $T_m$ on $\mathcal{L}(H)$ with an operator acting on the image of $\iota_0$ in $L_2(H \times G, L_2(X))$, we aim to construct an intertwining operator that models the action of $T_m$ at the level of functions. To this end, we define a pointwise multiplication operator that encodes the symbol $m$. Given a bounded function $m \in \ell_\infty(H)$, define $\widetilde{T_m}$ on $L_2(H \times G, L_2(X))$ by
\[
(\widetilde{T_m}(\xi))(h, g) \coloneqq m(h) \cdot \xi(h, g), \quad \text{for all } \xi \in L_2(H \times G, L_2(X)).
\]

By construction, this operator intertwines the action of $T_m$ with the homomorphism $\pi_l$ via the identity
\[
\pi_l \circ \widetilde{T_m} = (\id \otimes T_m) \circ \pi_l,
\]
which allows us to transfer the action of $T_m$ through the crossed product representation. The following lemma ensures that $\widetilde{T_m}$ maps the image of $\iota_0$ into the domain of $\pi_l$, so that both sides of the identity are well-defined.

\begin{lemma}\label{Lem: Fourier multiplier on L2(L2)}
For any $m \in \ell_\infty(H)$, we have $\widetilde{T_m}(\iota_0(C_c(G))) \subset \mathfrak{B}$. 
\end{lemma}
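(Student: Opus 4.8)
The plan is to transcribe, with one extra line, the computation from the proof of Lemma~\ref{Lem: image of C_c(G) under iota_0} showing $\iota_0(f)\in\mathfrak B$; the symbol $m$ will only contribute a harmless factor $\norm{m}_\infty$. Fix $f\in C_c(G)$ and set $\xi_m\coloneqq\widetilde{T_m}(\iota_0(f))$, so that $\xi_m(h,g,x)=m(h)f(g)\mathbbm{1}_{A_{h,g}}(x)$. Using the partition~\eqref{Eqn: partition X} exactly as in Lemma~\ref{Lem: image of C_c(G) under iota_0}, one checks $\norm{\xi_m}_{L_2(L_2)}^2\le\norm{m}_\infty^2\,\mu(X)\,\norm{f}_{L_2(G)}^2<\infty$, so $\xi_m\in L_2(L_2)$.

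Next I would compute the left convolution $\xi_m*\eta$ against an arbitrary $\eta\in L_2(L_2)$, following the same steps as in Lemma~\ref{Lem: image of C_c(G) under iota_0}: on the support of $\mathbbm{1}_{A_{hh_0,gg_0}}(g_0^{-1}x)$ one has $hh_0=\alpha(gg_0,g_0^{-1}x)$, so $m(hh_0)=m(\alpha(gg_0,g_0^{-1}x))$ and $h_0^{-1}=\alpha(gg_0,g_0^{-1}x)^{-1}h$; the sum over $h_0$ then collapses the indicator, and the substitution $g_0\mapsto g^{-1}g_0$ gives
\[
(\xi_m*\eta)(h,g,x)=\int_G m\big(\alpha(g_0,g_0^{-1}gx)\big)\,f(g_0)\,\eta\big(\alpha(g_0,g_0^{-1}gx)^{-1}h,\,g_0^{-1}g\big)(x)\,\dd g_0 .
\]
Applying Minkowski's inequality in $g_0$, bounding $\abs{m(\alpha(g_0,g_0^{-1}gx))}\le\norm{m}_\infty$, and then using that $h\mapsto\alpha(g_0,g_0^{-1}gx)^{-1}h$ is a bijection of $H$ for each fixed $g_0,g,x$ together with the invariance of the Haar measure of $G$ (precisely as in Lemma~\ref{Lem: image of C_c(G) under iota_0}), one obtains
\[
\norm{\xi_m*\eta}_{L_2(L_2)}\le\norm{m}_\infty\,\norm{f}_{L_1(G)}\,\norm{\eta}_{L_2(L_2)} .
\]
Since this holds in particular for every $\eta\in\mathfrak B'$, the vector $\xi_m$ is left-bounded, i.e.\ $\xi_m\in\mathfrak B$; as $f\in C_c(G)$ was arbitrary, $\widetilde{T_m}(\iota_0(C_c(G)))\subset\mathfrak B$.

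There is no genuine obstacle here: the argument is a direct adaptation of Lemma~\ref{Lem: image of C_c(G) under iota_0}, the only new input being the pointwise bound $\abs m\le\norm m_\infty$. The sole points deserving a word of care are that we neither need nor claim $\xi_m\in\mathfrak B'$ (the right-bounded estimate is irrelevant for this lemma), and that the defining sum–integral for $\xi_m*\eta$ is finite term by term because $f$ has compact support in $G$ while, for each relevant $g_0$ and $x$, the partition~\eqref{Eqn: partition X} leaves exactly one nonzero summand in the variable $h_0$.
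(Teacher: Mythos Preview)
Your proposal is correct and follows essentially the same route as the paper: you compute $(\widetilde{T_m}\iota_0(f))*\eta$ exactly as in the paper's proof, collapse the $h_0$-sum via the indicator $\mathbbm{1}_{A_{hh_0,gg_0}}(g_0^{-1}x)$, apply Minkowski and the pointwise bound $\lvert m\rvert\le\norm{m}_\infty$, and conclude with the same estimate $\norm{\xi_m*\eta}_{L_2(L_2)}\le\norm{m}_\infty\norm{f}_{L_1(G)}\norm{\eta}_{L_2(L_2)}$. The only cosmetic difference is that you first record $\xi_m\in L_2(L_2)$, while the paper begins by rewriting $m(h)\mathbbm{1}_{A_{h,g}}(x)=m(\alpha(g,x))\mathbbm{1}_{A_{h,g}}(x)$; neither affects the argument.
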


\begin{proof}
Let $f \in C_c(G)$. Recall from the earlier construction that
\[
\iota_0(f)(h, g)(x) = f(g) \mathbbm{1}_{A_{h, g}}(x).
\]
Then,
\[
(\widetilde{T_m} \iota_0(f))(h, g)(x) = f(g) \mathbbm{1}_{A_{h, g}}(x) m(h) = f(g) m(\alpha(g, x)) \mathbbm{1}_{A_{h, g}}(x),
\]
since $h = \alpha(g, x)$ for $x \in A_{h, g}$.

We now verify that $\widetilde{T_m}(\iota_0(f)) \in \mathfrak{B}$. For any $\eta \in L_2(L_2)$, we consider the convolution:
\[
    \begin{split}
      &(\widetilde{T_m}(\iota_0(f))*\eta)(h,g)(x)\\
      =&\sum_{h_0\in H}\int_{G} (\theta_{(h_0, g_0)}(\widetilde{T_m}(\iota_0(f))(hh_0,gg_0)) \eta(h_0^{-1},g_0^{-1}))(x)\dd g_0\\
      =&\sum_{h_0\in H}\int_{G} f(gg_0)\mathbbm{1}_{A_{hh_0,gg_0}}(g_0^{-1}x)m(\alpha(gg_0,g_0^{-1}x)) \eta(h_0^{-1},g_0^{-1})(x)\dd g_0\\
      =&\sum_{h_0\in H}\int_{G} f(gg_0)\mathbbm{1}_{A_{hh_0,gg_0}}(g_0^{-1}x)m(\alpha(gg_0,g_0^{-1}x)) \eta(\alpha(gg_0,g_0^{-1}x)^{-1}h,g_0^{-1})(x)\dd g_0\\
      =&\int_G f(gg_0)m(\alpha(gg_0,g_0^{-1}x)) \eta(\alpha(gg_0,g_0^{-1}x)^{-1}h,g_0^{-1})(x)\dd g_0\\
      =&\int_G f(g_0)m(\alpha(g_0,g_0^{-1}gx)) \eta(\alpha(g_0,g_0^{-1}gx)^{-1}h,g_0^{-1}g)(x)\dd g_0,
    \end{split}
  \]
  and hence
  \[
    \begin{split}
      &\norm{\widetilde{T_m}(\iota_0(f))*\eta}_{L_2(L_2)}\\
      =&\left( \sum_{h\in H}\int_{G\times X}\abs*{\int_G f(g_0)m(\alpha(g_0,g_0^{-1}gx)) \eta(\alpha(g_0,g_0^{-1}gx)^{-1}h,g_0^{-1}g)(x)\dd g_0 }^2\dd\mu(x)\dd g \right)^\frac{1}{2}\\
      \leq &\int_{G}\abs{f(g_0)}\left( \sum_{h\in H}\int_{ G\times X} \abs{m(\alpha(g_0,g_0^{-1}gx))\eta(\alpha(g_0,g_0^{-1}gx)^{-1}h,g_0^{-1}g)(x)}^2\dd\mu (x)\dd g \right)^\frac{1}{2}\dd g_0\\
      \leq &\norm{m}_{\ell_\infty(H)}\int_{G}\abs{f(g_0)}\left( \sum_{h\in H}\int_{G\times X} \abs{\eta(\alpha(g_0,g_0^{-1}gx)^{-1}h,g_0^{-1}g)(x)}^2\dd\mu (x)\dd g \right)^\frac{1}{2}\dd g_0\\
      =&\norm{m}_{\ell_\infty(H)}\int_{G}\abs{f(g_0)}\left( \sum_{h\in H}\int_{ G\times X} \abs{\eta(h,g)(x)}^2\dd\mu (x)\dd g \right)^\frac{1}{2}\dd g_0\\
      =& \norm{m}_{\ell_\infty(H)}\norm{\eta}_{L_2(L_2)} \norm{f}_{L_1(G)}.
    \end{split}
  \]
So $\widetilde{T_m}(\iota_0(f)) \in \mathfrak{B}$.
\end{proof}

With this preparatory result in place, we are now ready to verify the commutativity of the diagram stated in Theorem~\ref{Thm: multiplier diagram}, by explicitly computing the image of a Fourier multiplier under the composed map $\mathbb{E} \circ (\id \otimes T_m) \circ \iota$.

\begin{proof}[Proof of Theorem \ref{Thm: multiplier diagram}]
For any $f \in C_c(G)$ and $\xi \in \mathfrak{B}$, using the intertwining identity $\iota \circ \lambda = \pi_l \circ \iota_0$, we obtain
\[
  \begin{split}
   \tau_G(\lambda (f)^*\mathbb{E}(\pi_l(\xi)))= &   \tau( \iota(\lambda(f))^*\pi_l(\xi)) \\
=& \tau({\pi_l}(\iota_0(f))^*\pi_l(\xi))\\
   = & \sum_{h\in H}\frac{1}{\mu(X)}\int_{G\times X}\overline{ \iota_0 (f)}\xi (h,g,x) \dd\mu(x) \dd g\\
    =& \sum_{h\in H}\frac{1}{\mu(X)}\int_{G\times X}\overline{f(g)}\mathbbm{1}_{A_{h,g}}(x)\xi(h,g, x) \dd\mu(x)   \dd g \\
    =& \frac{1}{\mu(X)}\int_G \int_{X}\overline{f(g)}\xi(\alpha(g,x),g, x) \dd\mu(x)  \dd g\\
    =&\tau_G\left( \lambda(f)^*\lambda\left( \int_{X}\xi(\alpha(\cdot,x),\cdot, x) \dd\mu(x) \right) \right)
  \end{split}
\]
By duality, we can explicitly write
\[
  \mathbb{E}(\pi_l(\xi))=\frac{1}{\mu(X)}\int_G \int_{X}\xi(\alpha(g,x),g, x) \dd\mu(x) \lambda_g \dd g.
\]

Now recall that the crossed product decomposes as $L_\infty(X)\rtimes_\theta (H\times G)=(L^\infty(X) \rtimes_\theta G) \,\overline{\otimes}\, \mathcal{L}(H)$, so we may interpret $\id \otimes T_m$ as a map on the entire crossed product, where $\id$ denotes the identity on $L_\infty(X) \rtimes_\theta G$. By Lemma~\ref{Lem: Fourier multiplier on L2(L2)}, we have $\widetilde{T_m}(\iota_0(f)) \in \mathfrak{B}$. Combining this with the intertwining identities $\iota \circ \lambda = \pi_l \circ \iota_0$ and $\pi_l \circ \widetilde{T_m} = (\id \otimes T_m) \circ \pi_l$ again we conclude
 \[
  (\id \otimes T_m)( \iota(\lambda(f))) = \pi_l(\widetilde{T_m}(\iota_0(f))).
 \]
Applying $\mathbb{E}$ to both sides, we compute
\[
  \begin{split}
    &\mathbb{E}((\id\otimes T_m)( \iota(\lambda(f)))) \\
    =&\frac{1}{\mu(X)}\int_G \left( \int_{X}f(g)\mathbbm{1}_{A_{\alpha(g,x),g}}(x)m(\alpha(g,x))\dd\mu(x)\right)\lambda_{g}  \dd g\\
    =&\frac{1}{\mu(X)}\int_G f(g)\left( \int_{X}m(\alpha(g,x))\dd\mu(x)\right)\lambda_{g}  \dd g.
  \end{split}
\]
Define the transferred symbol $\widetilde{m} \in L_\infty(G)$ by
\[
\widetilde{m}(g) = \frac{1}{\mu(X)} \int_X m(\alpha(g, x)) \, d\mu(x).
\]
Then the above identity shows that
\[
\mathbb{E}\circ(\id \otimes T_m)( \iota(\lambda(f))) = T_{\widetilde{m}}(\lambda(f)),
\]
which yields the desired commutative diagram.
\end{proof}

Before proving Theorem \ref{Thm: Transference from L(H) to L(G)}, we recall the definition of QWEP. A $C^*$-algebra $A \subset B(H)$ is said to have Lance's weak expectation property (WEP) if there exists a unital completely positive map $\Phi:B(H)\to A^{**}$ such that $\Phi(a)=a$ for all $a\in A$. A $C^*$-algebra $A$ is said to be QWEP if it is a quotient of a $C^*$-algebra with the WEP. If a discrete group $G$ is hyperlinear, then the group von Neumann algebra $\mathcal{L}(G)$ is QWEP. We refer the reader to \cite{MR2072092, MR2391387} for further information on QWEP and hyperlinear. It was shown by Junge in an unpublished work that if a von Neumann algebra $\mathcal{M}$ is QWEP, then for any von Neumann algebra $\mathcal{N}$ and any completely bounded map $T$ on $L_p(\mathcal{N})$, the tensor product map satisfies
\begin{equation} \label{eq: QWEP}
 \| \id_\mathcal{M} \otimes T : L_p(\mathcal{M} \bar\otimes \mathcal{N}) \rightarrow L_p(\mathcal{M} \bar\otimes \mathcal{N}) \| \leq \norm{T}_{{\mathrm{cb}(L_p( \mathcal{N}))}}.
\end{equation}

By the complete $L_p$ contractivity of both $\mathbb{E}$ and $\iota$, Proposition~\ref{Prop: *-representation from LG to crossed product} and Theorem~\ref{Thm: multiplier diagram} immediately imply the following result.
\begin{theorem}\label{Thm: Transference from crossed product to L(G)}
	Let $1\leq p\leq \infty$. Let $H$ be a discrete group, $G$ be a unimodular locally compact second countable  group and $(X,\mu)$ be a finite measure space. Let $\theta$ be a $\mu$-preserving action of $G$ on $ (X,\mu)$. Let $\alpha:G\times X\to H$ and $m, \widetilde{m}$ be as above. Then we have
	\[
	\norm{T_{\widetilde{m}}:L_p(\mathcal{L}(G))\to L_p(\mathcal{L}(G))}_{\mathrm{cb}}\leq \norm{\id\otimes T_{m}:L_p( (L_\infty(X) \rtimes_\theta G) \,\overline{\otimes}\, \mathcal{L}(H))\to L_p( (L_\infty(X) \rtimes_\theta G) \,\overline{\otimes}\, \mathcal{L}(H))}_{\mathrm{cb}}.
	\]
\end{theorem}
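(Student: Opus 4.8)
The plan is to combine the commutative diagram of Theorem~\ref{Thm: multiplier diagram} with the fact that both $\iota$ and its adjoint $\mathbb{E}=\iota^*$ are completely positive and trace-preserving, hence completely contractive on every noncommutative $L_p$-space. First I would recall that by Proposition~\ref{Prop: *-representation from LG to crossed product}, $\iota:\mathcal{L}(G)\hookrightarrow (L_\infty(X)\rtimes_\theta G)\,\overline\otimes\,\mathcal{L}(H)$ is a normal, unital (on the relevant corner), trace-preserving $*$-homomorphism, so it extends to a complete isometry $\iota:L_p(\mathcal{L}(G))\to L_p\big((L_\infty(X)\rtimes_\theta G)\,\overline\otimes\,\mathcal{L}(H)\big)$ for all $1\le p\le\infty$; this is the standard interpolation/extension argument for trace-preserving $*$-homomorphisms. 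Dually, the conditional expectation $\mathbb{E}$ is a normal trace-preserving unital completely positive map, hence a complete contraction $\mathbb{E}:L_p\big((L_\infty(X)\rtimes_\theta G)\,\overline\otimes\,\mathcal{L}(H)\big)\to L_p(\mathcal{L}(G))$, with $\|\mathbb{E}\|_{\mathrm{cb}(L_p)}\le 1$.

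Next I would invoke Theorem~\ref{Thm: multiplier diagram}, which gives the operator identity $T_{\widetilde m}=\mathbb{E}\circ(\id\otimes T_m)\circ\iota$ on the dense subspace $\lambda(C_c(G))$ of $L_p(\mathcal{L}(G))$. Since the right-hand side is bounded on $L_p$ whenever $\id\otimes T_m$ is (using the complete contractivity of $\iota$ and $\mathbb{E}$ just established), this identity extends to all of $L_p(\mathcal{L}(G))$ by density, and likewise after tensoring with $\mathbb{M}_k$, because $\iota$, $\mathbb{E}$, $T_{\widetilde m}$ and $\id\otimes T_m$ all commute with amplification by $\mathbb{M}_k$. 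Therefore, taking completely bounded norms and using submultiplicativity,
\[
\|T_{\widetilde m}\|_{\mathrm{cb}(L_p(\mathcal{L}(G)))}\le \|\mathbb{E}\|_{\mathrm{cb}(L_p)}\cdot\|\id\otimes T_m\|_{\mathrm{cb}(L_p((L_\infty(X)\rtimes_\theta G)\,\overline\otimes\,\mathcal{L}(H)))}\cdot\|\iota\|_{\mathrm{cb}(L_p)},
\]
and the outer two factors are at most $1$, giving exactly the claimed bound.

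The main obstacle—really the only point requiring care—is justifying that $\iota$ extends to a \emph{completely} contractive (indeed completely isometric) map on $L_p$ and that $\mathbb{E}$ is its $L_p$-adjoint realizing the conditional expectation; one must check the compatibility of the traces, which was verified in Proposition~\ref{Prop: *-representation from LG to crossed product}, and then appeal to the standard theory of $L_p$-extensions of normal trace-preserving $*$-homomorphisms and of conditional expectations (for which completely positivity and trace preservation yield $L_p$-complete contractivity by interpolation between the $p=1$ and $p=\infty$ endpoints). A secondary subtlety is that $T_{\widetilde m}$ and $\id\otimes T_m$ are a priori only densely defined; but the diagram identity on $\lambda(C_c(G))$ together with the boundedness of the composition $\mathbb{E}\circ(\id\otimes T_m)\circ\iota$ shows that $T_{\widetilde m}$ is automatically bounded with the stated norm, so no separate closability argument is needed. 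Once these points are in place the theorem follows immediately, as asserted in the sentence preceding the statement.
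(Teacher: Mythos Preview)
Your proposal is correct and follows precisely the paper's own approach: the paper states in the sentence immediately preceding the theorem that it follows from the complete $L_p$-contractivity of both $\iota$ and $\mathbb{E}$ combined with Proposition~\ref{Prop: *-representation from LG to crossed product} and Theorem~\ref{Thm: multiplier diagram}, which is exactly the factorization $T_{\widetilde m}=\mathbb{E}\circ(\id\otimes T_m)\circ\iota$ together with submultiplicativity that you spell out. Your additional remarks on density and matrix amplification are legitimate elaborations of details the paper leaves implicit.
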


Now Theorem~\ref{Thm: Transference from L(H) to L(G)} is in reach.
\begin{proof}[Proof of Theorem \ref{Thm: Transference from L(H) to L(G)}]
Since $L_\infty(X) \rtimes_\theta G$ is QWEP and $T_m$ is a completely bounded Fourier multiplier on $L_p(\mathcal{L}(H))$, we may apply inequality \eqref{eq: QWEP} to the operator $\id\otimes T_m$ appearing in Theorem~\ref{Thm: Transference from crossed product to L(G)} to deduce that 
\[
\norm{T_{\widetilde{m}}}_{\mathrm{cb}(L_p(\mathcal{L}(G)))} \leq \norm{T_m}_{\mathrm{cb}(L_p(\mathcal{L}(H)))}.
\]
This completes the proof of the theorem.
\end{proof}
\begin{remark}\label{Rmk: main Theorem for left cocycle} 

The results in this section may also be adapted for right cocycles, that is, maps $\beta : X \times G \to H$ satisfying the right cocycle identity
\[
\beta(x, g_1 g_2) = \beta(x, g_1) \beta(xg_1, g_2).
\]
In particular, given a symbol $m \in \ell_\infty(H)$, we may consider the associated symbol $\widetilde{m} \in L_\infty(G)$ by  
\[
\widetilde{m}(g) = \frac{1}{\mu(X)} \int_X m(\beta(x, g)) \, \dd\mu(x), \quad g \in G.
\]
Proposition~\ref{Prop: *-representation from LG to crossed product}, Theorem~\ref{Thm: multiplier diagram}, and Theorem~\ref{Thm: Transference from crossed product to L(G)} remain valid when the cocycle $\alpha$ is replaced by the right cocycle $\beta$.

\end{remark}
\begin{remark}
The transference method developed in this section extends beyond noncommutative $L_p$-spaces and applies equally to fully symmetric function spaces $E$, such as Marcinkiewicz, Orlicz, and most Lorentz spaces. Indeed, in this broader setting, the conditional expectation
\[
\mathbb{E} : E\big((L^\infty(X) \rtimes_\theta G) \,\overline{\otimes}\, \mathcal{L}(H)) \to E(\mathcal{L}(G))
\]
is still completely contractive, as established via interpolation techniques; see  \cite[Theorem 3.4]{MR1188788}.
\end{remark}
\begin{remark}
  The similar transference of Schur multipliers from $H$ to $G$ is more direct.  For a measure space $\Omega$ and a function $\psi$ on $\Omega\times \Omega$, we may consider the Schur multiplier map $M_\psi$ on the Schatten class $S_p (L_2 (\Omega))$ and we denote the corresponding completely bounded norm by $\norm{M_{\psi}}_{\mathrm{cb}(S^p(L_2(\Omega)))}$; see \cite{MR2838352}.
  Let $(X,\mu)$ be a probability measure space and suppose $\alpha: G\times X\to H$ is a measurable map. Given $\varphi$ be a bounded continuous function on $H\times H$, define the transferred symbol
\begin{equation}\label{Eqn: cocycle transfer}
  \widetilde{\varphi}(g_1,g_2)=\int_X \varphi(\alpha(g_1,x),\alpha(g_2,x))\dd\mu(x).
\end{equation}
Then we have the norm estimate:
\[
\norm{M_{\widetilde{\varphi}}}_{\mathrm{cb}(S^p(L_2(G)))}\leq \norm{M_{\varphi}}_{\mathrm{cb}(S^p(L_2(H)))}.
\]
Indeed, for each $x\in X$, define the subset $K_x\subset H\times G$ by
\[
	K_x=\{(h,g)\in H\times G:h=\alpha(g,x)\},
\]
and the embedding
\[
    \iota_x:G\times G\hookrightarrow K_x\times K_x\subset (H\times G)\times (H\times G),\quad (g_1,g_2)\mapsto ((\alpha(g_1,x),g_1),(\alpha(g_2,x),g_2)).
  \]
   Also define $\varphi'\in C((H\times G)\times (H\times G))$ by
  \[
    \varphi'((h_1,g_1),(h_2,g_2))=\varphi(h_1,h_2).
  \]
  Then the transferred symbol can be written as 
  \[
    \widetilde{\varphi}=\int_X \varphi'\circ \iota_x\dd\mu(x).
  \]
  By \cite[Theorem 1.19]{MR2838352} we obtain
  \[
    \begin{split}
      \norm{M_{\widetilde{\varphi}}}_{\mathrm{cb}(S^p(L_2(G)))}\leq &\int_X\norm{M_{\varphi'\circ \iota_x}}_{\mathrm{cb}(S^p(L_2(G)))}\dd\mu(x)\\
      =&\int_X\norm{M_{\varphi'|_{(K_x\times K_x)}}}_{\mathrm{cb}(S^p(L_2(K_x)))}\dd\mu(x)\\
      \leq &\int_X\norm{M_{\varphi'}}_{\mathrm{cb}(S^p(L_2(H\times G)))}\dd\mu(x)\\
      =&\norm{M_{\varphi'}}_{\mathrm{cb}(S^p(L_2(H\times G)))}.
    \end{split}
  \]
  Finally, by \cite[Remark 1.6]{MR2838352} we have
\[
\norm{M_{\varphi'}}_{\mathrm{cb}(S^p(L_2(H\times G)))}=\norm{M_{\varphi}}_{\mathrm{cb}(S^p(L_2(H)))}.
\]
\end{remark}

\section{Applications}\label{Sec: applications}

We now present several applications of Theorem~\ref{Thm: Transference from L(H) to L(G)}, illustrating how transference via cocycles connects geometric and analytic properties between different groups. These include results on measure equivalence, maximal inequalities for amenable groups, a noncommutative version of the Jodeit theorem, and an explicit case study involving $SL_2(\mathbb{R})$.\par 

Throughout this section, in order to ensure that the transference map in~\eqref{eqn: formula of tilde m} sends the Fourier algebra $A(H)$ to the Fourier algebra $A(G)$, we require the cocycles to satisfy a properness condition. This leads to the following definition.\par 

\begin{definition}[Proper cocycle]\label{Def: proper cocycle}
  Let $G$, $H$ be locally compact second countable groups and let $G$ act on a standard probability space $(X,\mu)$ and assume the action is Borel. 
  \begin{enumerate}[(1)]
    \item A cocycle $\alpha:G\times X\to H$ is said to be \emph{proper with respect to a family $\mathcal{A}$} of Borel sets in $X$ if 
    \begin{itemize}
      \item for every compact subset $K\subset G$ and every $A\in \mathcal{A}$,  there is a precompact set $L(K,A)\subset H$ such that, for every $g\in K$ we get that  $\alpha(g,x)\in L(K,A)$ for almost all $x\in A\cap g^{-1}A$;
      \item for every compact subset $L\subset H$ and every $A\in \mathcal{A}$, we get that the set $K(L,A)$ of all $g\in G$ such that $\mu\{x\in X:a\in A\cap g^{-1}A,\alpha(g,x)\in L\}>0$, is precompact in G.
    \end{itemize}
    \item  A family $\mathcal{A}$ of Borel sets in $X$ is said to be \emph{large} if it is closed under finite unions and under taking Borel subsets, and if for every $\varepsilon>0$ there is a set $A\in \mathcal{A}$ such that $\mu(X\setminus A)<\varepsilon$.
    \item A cocycle $\alpha:G\times X\to H$ is said to be \emph{proper} if $\alpha$ is proper with respect to some large family $\mathcal{A}$.
  \end{enumerate}
\end{definition}\par

The above definition of a proper cocycle is adapted from \cite{deprez2014permanence,MR1756981,MR3310701} for technical convenience. It is straightforward to see that the above definition is weaker than those given in the aforementioned works; that is, properness in the sense of \cite{deprez2014permanence,MR1756981,MR3310701} implies properness in our sense.\par

We now recall the following lemma from \cite[Lemma 2.1]{MR3310701}. In Assertion (2), we replace the notion of properness used in \cite{MR3310701} with the weaker one defined above. This modification does not affect the validity of the lemma, as the proof that $\widetilde{m} \in A(G)$ in \cite[Lemma 2.1]{MR3310701} relies solely on the existence of a large family $\mathcal{A}$.

\begin{lemma}[{\cite[Lemma 2.1]{MR3310701}}]\label{lem: A to A}
Let $G$ and $H$ be locally compact second countable groups. Suppose $G$ acts in a Borel measurable way on a standard probability space $(X, \mu)$, and let $\alpha: G \times X \to H$ be a measurable cocycle. Then:
\begin{enumerate}
  \item If a symbol $m\in L_\infty(H)$ is  positive definite, then so is $\widetilde{m}\in L_\infty(G)$.
  \item If $\alpha$ is proper and $m \in A(H)$, then $\widetilde{m} \in A(G)$.
\end{enumerate}
\end{lemma}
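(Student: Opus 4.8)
The plan is to prove the two assertions separately, following the structure of Lemma 2.1 in \cite{MR3310701} but keeping track of which parts of the argument actually use properness.

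\textbf{Assertion (1).} Suppose $m \in L_\infty(H)$ is positive definite. The goal is to show $\widetilde{m}(g) = \frac{1}{\mu(X)}\int_X m(\alpha(g,x))\,\dd\mu(x)$ is positive definite on $G$. First I would fix $g_1,\dots,g_n \in G$ and scalars $c_1,\dots,c_n \in \mathbb{C}$, and consider $\sum_{i,j} c_i \overline{c_j}\, \widetilde{m}(g_j^{-1}g_i)$. Using the cocycle identity \eqref{Eqn: cocycle identity left}, one has $\alpha(g_j^{-1}g_i, x) = \alpha(g_j^{-1}, g_i x)\alpha(g_i, x)$; after a change of variable (or rather by writing the argument directly in terms of $\alpha(g_i, x)$ and $\alpha(g_j, x)$) one can rewrite $\alpha(g_j^{-1}g_i,x)$ in the form needed so that, for each fixed $x$, the matrix $\big(m(\alpha(g_j^{-1}g_i,x))\big)_{i,j}$ is of positive type. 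Concretely, the standard trick is: set $h_i(x) = \alpha(g_i, x)$; then from the cocycle relation $\alpha(g_j^{-1} g_i, x)$ equals $h_j(x)^{-1} h_i(x)$ up to a common conjugation depending only on $x$ (coming from evaluating $\alpha$ at $g_i x$ versus $x$), and since positive definiteness of $m$ is preserved under $h \mapsto k^{-1} h k$ restricted to evaluation on the relevant matrix, the integrand $\sum_{i,j} c_i\overline{c_j}\, m(\alpha(g_j^{-1}g_i,x)) \geq 0$ pointwise in $x$. Integrating over $X$ and dividing by $\mu(X)$ gives $\sum_{i,j} c_i \overline{c_j}\,\widetilde{m}(g_j^{-1}g_i) \geq 0$, and boundedness/measurability of $\widetilde m$ are clear. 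I would verify the matrix-level computation carefully since the precise form in which $\alpha(g_j^{-1}g_i,x)$ factors through $h_i(x), h_j(x)$ is where a sign or inverse error could creep in; this is the one slightly delicate point of (1).

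\textbf{Assertion (2).} Now suppose $\alpha$ is proper (with respect to some large family $\mathcal{A}$) and $m \in A(H)$. The plan is to approximate. Since the statement asserts $\widetilde{m}\in A(G)$ and $A(G)$ is a Banach algebra, and since $m \in A(H)$ can be approximated in the $A(H)$-norm, it suffices to handle $m$ of a convenient dense form, e.g.\ $m = \langle \lambda_\cdot \xi, \eta\rangle$ with $\xi,\eta$ compactly supported, or more simply to fix $\varepsilon>0$, pick $A \in \mathcal{A}$ with $\mu(X\setminus A)<\varepsilon$, and split the integral defining $\widetilde m$ into the part over $A$ and the part over $X\setminus A$; the latter has small $L_\infty$ norm, bounded by $\varepsilon\|m\|_\infty/\mu(X)$, but that only controls the sup norm, not the $A(G)$ norm, so the splitting has to be done more cleverly. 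The correct route, as in \cite{MR3310701}, is: write $m(h) = \langle \rho(h)\xi, \eta\rangle_{\ell_2(H)}$ for the right regular (or a cyclic) representation, and then express $\widetilde m(g) = \frac{1}{\mu(X)}\int_X \langle \rho(\alpha(g,x))\xi,\eta\rangle\,\dd\mu(x)$, which one recognizes as a coefficient $\langle \pi(g)\Xi, \mathrm{H}\rangle$ of the representation $\pi$ of $G$ on $L_2(X)\otimes \ell_2(H)$ (or on $L_2(X,\ell_2(H))$) induced by the cocycle, acting by $(\pi(g)F)(x,h) = F(g^{-1}x,\, \alpha(g^{-1},x)^{-1} h\,)$ or similar; the vectors $\Xi, \mathrm{H}$ are built from $\xi,\eta$ and the function $1$ on $X$. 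To conclude $\widetilde m \in A(G)$ one needs $\pi$ to be (a subrepresentation of a multiple of) the regular representation of $G$, equivalently $\widetilde m$ to vanish at infinity with an appropriate square-integrability — and this is exactly where properness of $\alpha$ enters: the two properness conditions guarantee that the matrix coefficients of $\pi$ coming from compactly supported $\xi,\eta$ (supported on a compact $L\subset H$) are supported, in $g$, essentially within the precompact sets $K(L,A)$, so $\pi$ restricted to these vectors has coefficients supported near a compact set and hence lies in $A(G)$. I would spell out that the large family $\mathcal A$ lets us exhaust $X$ up to measure $\varepsilon$ by sets $A$ on which this precompact-support phenomenon holds, and then a limiting/summing argument over such $A$ (using that $A(G)$ is complete and the tails are $A(G)$-small because $m\in A(H)$ has the corresponding tail-smallness) finishes the proof.

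\textbf{Main obstacle.} The genuinely delicate step is Assertion (2): reorganizing $\widetilde m$ as a matrix coefficient of a $G$-representation associated to the cocycle and then using properness to show this coefficient lies in $A(G)$ rather than merely in $C_0(G)$ or $B(G)$. The bookkeeping with the cocycle identity inside the representation formula, and the use of both clauses of the properness definition (one to bound $\alpha(g,x)$ for $g$ in a compact set, the other to bound the set of $g$ for which $\alpha(g,x)$ lands in a compact set) to get the right decay/support, is where the real content sits. Fortunately, the excerpt explicitly notes that this argument in \cite[Lemma 2.1]{MR3310701} uses only the existence of a large family $\mathcal A$, so our weaker notion of properness suffices and the proof there transfers essentially verbatim; I would state this and refer to \cite{MR3310701} for the remaining routine details rather than reproduce the full construction.
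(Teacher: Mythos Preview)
The paper does not give its own proof of this lemma; it is quoted from \cite[Lemma 2.1]{MR3310701}, with the sole remark that the weakened notion of properness adopted here still suffices because the argument there for $\widetilde m\in A(G)$ uses only the existence of a large family $\mathcal A$. Your plan for Assertion~(2) matches that argument: realize $\widetilde m$ as a matrix coefficient of the cocycle-induced representation on $L_2(X,\mathcal H)$ and use properness to control the support of the coefficients built from compactly supported vectors, then approximate.

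Your plan for Assertion~(1), however, contains a genuine gap. With $h_i(x)=\alpha(g_i,x)$ the cocycle identity gives
\[
\alpha(g_j^{-1}g_i,x)=\alpha(g_j^{-1},g_ix)\,h_i(x)=\alpha(g_j,g_j^{-1}g_ix)^{-1}h_i(x),
\]
and the first factor depends on $i$ through $g_ix$; it is \emph{not} of the form $h_j(x)^{-1}h_i(x)$ up to a conjugation depending only on $x$. Concretely, already for $G=\mathbb Z$ acting on $X=\mathbb Z/2\mathbb Z$ with $\alpha(1,0)=1$, $\alpha(1,1)=0$ one finds $\alpha(g_2^{-1}g_1,0)\neq -\alpha(g_1^{-1}g_2,0)$, so the matrix $\big(m(\alpha(g_j^{-1}g_i,x))\big)_{i,j}$ is not even Hermitian in general and there is no pointwise positivity to integrate. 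The correct route is precisely the one you already describe for~(2): write $m(h)=\langle\pi(h)\xi,\xi\rangle$ via GNS, form the unitary representation $(\sigma(g)F)(x)=\pi(\alpha(g,g^{-1}x))F(g^{-1}x)$ on $L_2(X,\mathcal H)$ (the cocycle identity is exactly what makes $\sigma$ multiplicative), and observe that for the constant section $\Xi(x)\equiv\xi$ one has $\langle\sigma(g)\Xi,\Xi\rangle=\mu(X)\widetilde m(g)$. That gives positive definiteness immediately. In short: use the same induced-representation machinery for both parts; the direct ``$h_j^{-1}h_i$'' argument does not go through.
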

The following lemma will be used frequently in this section.
\begin{lemma}\label{lem: uniformly convergence to uniformly convergence}
  Let $G$, $H$, $(X,\mu)$ be as in the above lemma. Let $\alpha: G \times X \to H$ be a measurable proper cocycle and $(m_n)_{n\in\mathbb{N}}$ be a sequence of symbols in $L_\infty(H)$. If $(m_n)_{n\in \mathbb{N}}$ is uniformly bounded and $m_n\to 1$ uniformly on compact subsets of $H$, then $(\widetilde{m}_n)_{n\in \mathbb{N}}$ is uniformly bounded and $\widetilde{m}_n\to 1$ uniformly on compact subsets of $G$.
\end{lemma}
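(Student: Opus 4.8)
The plan is to prove the two assertions—uniform boundedness of $(\widetilde{m}_n)$ and uniform convergence $\widetilde{m}_n \to 1$ on compact sets—separately, with the second being the substantive part. Uniform boundedness is immediate from the defining formula \eqref{eqn: formula of tilde m}: since $\mu$ is a probability measure, $|\widetilde{m}_n(g)| \leq \frac{1}{\mu(X)}\int_X |m_n(\alpha(g,x))|\,\dd\mu(x) \leq \sup_n \|m_n\|_\infty < \infty$, uniformly in $g \in G$.

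For the convergence statement, fix a compact set $K \subset G$ and $\varepsilon > 0$; I want to show $|\widetilde{m}_n(g) - 1| < \varepsilon$ for all $g \in K$ once $n$ is large. The idea is to exploit properness to confine the relevant values $\alpha(g,x)$ to a fixed precompact subset of $H$, on which $m_n \to 1$ uniformly. First, using that $\mathcal{A}$ is large, pick $A \in \mathcal{A}$ with $\mu(X \setminus A) < \varepsilon$; since the action preserves $\mu$, also $\mu(X \setminus g^{-1}A) < \varepsilon$ for every $g$, so $\mu(A \cap g^{-1}A) > 1 - 2\varepsilon$. By the first properness condition applied to $K$ and $A$, there is a precompact $L = L(K,A) \subset H$ with $\alpha(g,x) \in L$ for a.e.\ $x \in A \cap g^{-1}A$, for every $g \in K$. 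By hypothesis, $m_n \to 1$ uniformly on $\overline{L}$, so choose $N$ with $\sup_{h \in \overline{L}} |m_n(h) - 1| < \varepsilon$ for $n \geq N$. Then split the integral defining $\widetilde{m}_n(g) - 1 = \frac{1}{\mu(X)}\int_X (m_n(\alpha(g,x)) - 1)\,\dd\mu(x)$ over $A \cap g^{-1}A$ and its complement: on $A \cap g^{-1}A$ the integrand is bounded by $\varepsilon$ in absolute value, contributing at most $\varepsilon$; on the complement, of measure $< 2\varepsilon$, the integrand is bounded by $\sup_n\|m_n\|_\infty + 1 =: C+1$, contributing at most $2\varepsilon(C+1)$. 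Hence $|\widetilde{m}_n(g) - 1| \leq \varepsilon + 2\varepsilon(C+1)$ for all $g \in K$ and $n \geq N$, which can be made arbitrarily small by choosing $\varepsilon$ small at the outset; this gives uniform convergence on $K$.

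The main point requiring care—though it is not really an obstacle once the definitions are unpacked—is the measurability and the correct handling of the set $A \cap g^{-1}A$ uniformly over $g \in K$: properness gives a \emph{single} precompact $L(K,A)$ that works for the whole compact $K$ simultaneously, which is exactly what makes the choice of $N$ independent of $g$. One should also double-check that the measure-preservation of the $G$-action on $(X,\mu)$ indeed yields $\mu(g^{-1}A) = \mu(A)$; this is built into the standing hypotheses (the action is $\mu$-preserving and Borel). No use of the second properness condition is needed for this lemma—only the first properness condition together with largeness of $\mathcal{A}$.
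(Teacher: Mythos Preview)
Your proposal is correct and follows essentially the same approach as the paper: split the integral over $A\cap g^{-1}A$ and its complement, use the first properness condition to confine $\alpha(K,\cdot)$ to a precompact set on which $m_n\to 1$ uniformly, and bound the complement piece via $\mu(X\setminus(A\cap g^{-1}A))<2\varepsilon$ together with the uniform bound on $(m_n)$. Your remark that only the first properness condition (not the second) is needed here is correct and worth keeping.
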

\begin{proof}
   Fix a compact set $K \subset G$ and $\varepsilon > 0$. Since $\alpha$ is a proper cocycle, there exists $A \in \mathcal{A}$ such that $\mu(A) > 1 - \varepsilon$ and the set
\[
\left\{ \alpha(g, x) :g\in K, x \in A \cap g^{-1} A \right\}
\]
is precompact. Then we estimate
\[
  \begin{split}
    \sup_{g\in K}\abs{\widetilde{m}_n(g)-1}=&\sup_{g\in K}\abs{\int_{X}m_n(\alpha(g,x))\dd\mu(x)-1}\\
    \leq &\sup_{g\in K}\abs{\int_{X\setminus A\cap g^{-1}A}(m_n(\alpha(g,x))-1)\dd\mu(x)}+\sup_{g\in K}\abs{\int_{A\cap g^{-1}A}(m_n(\alpha(g,x))-1)\dd\mu(x)}\\
    \leq &\, 2\varepsilon\norm{m_n-1}_{\infty}+\sup_{g\in K}\abs{\int_{A\cap g^{-1}A}(m_n(\alpha(g,x))-1)\dd\mu(x)}.
  \end{split}
\]
Since $m_n \to 1$ uniformly on compact subsets of $H$ and the range of $\alpha(K, x)$ over $A \cap g^{-1}A$ is precompact, the second term tends to $0$ as $n \to \infty$. Therefore,
\[
 \sup_{g\in K}\abs{\widetilde{m}_n(g)-1}\leq 2\varepsilon\norm{m_n-1}_{\infty}+\varepsilon
\]  
for all sufficiently large $n$. Since $\varepsilon > 0$ was arbitrary, it follows that $\widetilde{m}_n \to 1$ uniformly on $K$. \par 
\end{proof}

\subsection{Measure equivalence of groups}\label{Subsec: Measure equivalence of groups}

We begin with an application of Theorem~\ref{Thm: Transference from L(H) to L(G)} to measure equivalence of discrete groups. In particular, we prove that $L_p$-weak amenability is invariant under measure equivalence, assuming a natural QWEP condition.\par 
We say that two countable discrete groups $\Gamma$ and $\Delta$ are \emph{measure equivalent} (abbreviated as \emph{ME}) if there exists a standard infinite measure space $(\Sigma, \sigma)$ equipped with commuting, measure-preserving, free actions of $\Gamma$ and $\Delta$, such that both actions admit fundamental domains of finite measure. In this case, we call $(\Sigma, \sigma)$ an \emph{ME coupling} of $\Gamma$ and $\Delta$. For notational convenience, we denote the $\Gamma$-action on the left and the $\Delta$-action on the right.\par 
For countable discrete groups that are ME, one can construct Borel cocycles from one group to the other. A detailed construction appears in \cite[p.~1064]{MR1740986}. For the reader's convenience, we now outline this construction: Let $(\Sigma, \sigma)$ be an ME coupling of two countable groups $\Gamma$ and $\Delta$. Choose Borel cross-sections from the quotient spaces $\Gamma \backslash \Sigma$ and $\Sigma / \Delta$ into $\Sigma$, and let $Y$ and $X$ denote their respective ranges so that
\[
\Sigma = \bigsqcup_{\delta \in \Delta} X \delta = \bigsqcup_{\gamma \in \Gamma} \gamma Y,
\]
where $(X, \mu)$ and $(Y, \nu)$ are standard measure spaces, with $\mu = \sigma|_X$ and $\nu = \sigma|_Y$. We refer to $(X,\mu)$ and $(Y,\nu)$ as fundamental domains for the actions $\Sigma \curvearrowleft \Delta$ and $\Gamma \curvearrowright \Sigma$, respectively. Given $\gamma \in \Gamma$ and $x \in X$, there exist unique elements $x(\gamma, x) \in X$ and $\alpha(\gamma, x) \in \Delta$ such that
\begin{equation}\label{Eqn: construction of cocycle alpha}
\gamma x = x(\gamma, x) \alpha(\gamma, x).
\end{equation}
The map $\alpha:\Gamma\times X\to \Delta$ satisfies the (left) cocycle identity:
\begin{equation}\label{Eqn: cocycle for ME}
\alpha(\gamma_1 \gamma_2, x) = \alpha(\gamma_1, \gamma_2 \cdot x)  \alpha(\gamma_2, x)
\end{equation} 
with the natural (left) $\mu$-preserving action of $\Gamma$ on $X$ given by $\gamma \cdot x \coloneqq x(\gamma, x)$. Similarly, there is a map $\beta : Y \times \Delta \to \Gamma$ satisfies the (right) cocycle identity:
\begin{equation}\label{Eqn: cocycle for ME, right}
\beta(y, \delta_1 \delta_2) = \beta(y, \delta_1) \beta(y \cdot \delta_1, \delta_2),
\end{equation}

\begin{proof}[Proof of Corollary~\ref{Coro: transferred multiplier of two lattices in non-amenable groups}]
Without loss of generality, assume that $\Delta$ is weakly $L_p$-amenable. That is, there exists a sequence of multipliers $(m_n)_{n \geq 0} \subset A(\Delta)$ such that $m_n \to 1$ pointwise, and
\[
\norm{T_{m_n}(f)}_p\leq_{\mathrm{cb}} C_p\norm{f}_p, \quad \text{for all } f \in L_p(\mathcal{L}(\Delta)).
\]

Let $(X, \mu)$, $\alpha$ be as above. Assume without loss of generality that $\mu(X) = 1$. By Theorem~\ref{Thm: Transference from L(H) to L(G)}, the transferred symbol
\[
\widetilde{m}_n(\gamma) = \int_X m_n(\alpha(\gamma, x)) \, \dd\mu(x)
\]
defines a bounded Fourier multiplier on $L_p(\mathcal{L}(\Gamma))$ with the estimate
\[
\norm{T_{\widetilde{m}_n}(f)}_p\leq_{\mathrm{cb}} C_p\norm{f}_p, \quad \text{for all } f \in L_p(\mathcal{L}(\Gamma)).
\]
Since $\Gamma$ and $\Delta$ are discrete and measure equivalent, it follows from \cite[Lemma 3.1]{MR3310701} that the associated cocycle $\alpha$ is proper in the sense of \cite{MR3310701}, and hence also proper in the sense of Definition~\ref{Def: proper cocycle}.
 By Lemmas \ref{lem: A to A} and \ref{lem: uniformly convergence to uniformly convergence}, we also know that for each $n \in \mathbb{N}$, the function $\widetilde{m}_n$ is positive definite, belongs to $A(\Gamma)$ and converges to $1$ pointwise as $n\to \infty$. Hence, $\Gamma$ is weakly $L_p$-amenable, and we have
\[
\Lambda_{p, \mathrm{cb}}(\Gamma) \leq \Lambda_{p, \mathrm{cb}}(\Delta).
\]
Since ME is an equivalence relation, the reverse inequality also holds. 
\end{proof}
The verification of the QWEP assumption for the crossed products (or equivalently the hyperlinearity of the associated actions in the sense of \cite{MR2826401,MR2566316}) in Corollary~\ref{Coro: transferred multiplier of two lattices in non-amenable groups} is sometimes nontrivial and goes beyond the scope of this paper. We refer the reader to \cite{MR2826401,MR2566316,gao2024soficitygroupactionssets,gao2024actionslerfgroupssets} for related discussion in this direction.

\subsection{Pointwise convergence of noncommutative Fourier series}
In this part, we apply our transference method to give a simplified proof of the maximal inequality for Fourier series summation on locally compact unimodular amenable groups established in \cite{MR4813921}, and moreover improve the constant in the inequality.\par 

We recall the following analogue for the noncommutative setting of the usual almost everywhere convergence. This is the notion of almost uniform convergence introduced by \cite{MR428060}.\par

\begin{definition}\label{Def: b.a.u and a.u}
  Let $(x_n)_{n\in \mathbb{N}}$ be a sequence in $L_0(\mathcal{M})$. We say $(x_n)_{n\in \mathbb{N}}$ converges \emph{almost uniformly} (\emph{a.u.}) to $x\in L_0(\mathcal{M})$ if for any $\varepsilon>0$, there is a projection $e\in \mathcal{M}$ s.t. 
  \[
  \tau(e^\perp)< \varepsilon\quad\text{and}\quad  \lim_{n\to \infty}\norm{(x-x_n)e}_\infty=0.
  \]
  We say $(x_n)_n$ converges \emph{bilaterally almost uniformly} (\emph{b.a.u.}) to $x\in L_0(\mathcal{M})$ if for any $\varepsilon>0$, there is a projection $e\in \mathcal{M}$ s.t. 
  \[
  \tau(e^\perp)< \varepsilon\quad\text{and}\quad  \lim_{n\to \infty}\norm{e(x-x_n)e}_\infty=0.
  \]
\end{definition}
Note that in the commutative case, both notions are equivalent to the usual almost everywhere convergence in terms of Egorov's Theorem in the case of probability space.\par

\begin{corollary}\label{Cor: CPAP for amenable groups}
Let $G$ be a unimodular amenable locally compact second countable group. Then there exists a sequence of unital positive definite functions $(m_n)_{n \geq 0} \subset A(G)$ such that $m_n \to 1$ uniformly on compact sets and for all $f \in L_p(\mathcal{L}(G))$ with $1 < p <\infty$, we have
\begin{equation}\label{Ineq: strong type (p,p) on amenable group}
   \norm{{\sup}_n^+ T_{m_n}(f)}_{p}\lesssim (p-1)^{-2}\norm{f}_{p}.
\end{equation}
Moreover,
\[
T_{m_n} f \to f \quad \text{b.a.u. (and a.u.\ if $p \geq 2$) as } n \to \infty.
\]
\end{corollary}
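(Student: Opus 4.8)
The plan is to reduce everything to a transference from the integers $\mathbb Z$. Since $G$ is a second countable locally compact amenable group, it is measure equivalent to $\mathbb Z$ (indeed any two infinite amenable groups, or more generally any amenable group and $\mathbb Z$, are measure equivalent, by a theorem of Ornstein--Weiss type), so there is a finite measure space $(X,\mu)$, a $\mu$-preserving action $\theta$ of $G$ on $X$, and a proper Borel cocycle $\alpha : G\times X\to\mathbb Z$. The crossed product $L_\infty(X)\rtimes_\theta G$ is the group-measure-space algebra of an amenable action, hence hyperfinite, hence QWEP, so Theorem~\ref{Thm: Transference from L(H) to L(G)} applies with $H=\mathbb Z$. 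On the side of $\mathbb Z$ we use the classical Fej\'er kernel: take $m_N^{\mathbb Z}$ to be the normalized Fej\'er weights on $\{-N,\dots,N\}$, which are unital and positive definite on $\mathbb Z$. The key classical input is the noncommutative Fej\'er maximal inequality: the operators $T_{m_N^{\mathbb Z}}$ on $L_p(\mathcal L(\mathbb Z))=L_p(\mathbb T)$ satisfy $\|\sup_N^+ T_{m_N^{\mathbb Z}}f\|_p\lesssim (p-1)^{-2}\|f\|_p$ for $1<p<\infty$ (this follows from the noncommutative Hardy--Littlewood maximal inequality / the transference of the classical maximal function, with the stated dependence on $p$ coming from the Marcinkiewicz-type interpolation near $p=1$; it is the $G=\mathbb Z$ case of the result being proved, available in the literature).

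Next I would set $m_n := \widetilde{m_{k_n}^{\mathbb Z}}$ for a suitable subsequence $k_n\to\infty$ (in fact any exhaustion works). By Lemma~\ref{lem: A to A}(1) each $m_n$ is positive definite and unital (the transfer of the constant function $1$ is $1$), and by Lemma~\ref{lem: A to A}(2) together with properness of $\alpha$ we get $m_n\in A(G)$; by Lemma~\ref{lem: uniformly convergence to uniformly convergence}, $m_n\to 1$ uniformly on compact subsets of $G$. For the maximal inequality~\eqref{Ineq: strong type (p,p) on amenable group}, I would run the transference diagram of Theorem~\ref{Thm: multiplier diagram} simultaneously over the whole sequence: the point is that $\id\otimes T_{m_N^{\mathbb Z}}$ on $L_p\big((L_\infty(X)\rtimes_\theta G)\,\overline\otimes\,\mathcal L(\mathbb Z)\big)$ inherits the strong $(p,p)$ bound from $T_{m_N^{\mathbb Z}}$ with the \emph{same} constant — this is exactly the vector-valued ($\ell_\infty$-valued) version of the QWEP tensor-stability estimate~\eqref{eq: QWEP}, applied to the map $(f_N)\mapsto (T_{m_N^{\mathbb Z}}f_N)$, using that $L_\infty(X)\rtimes_\theta G$ is QWEP — and then the completely positive, hence $L_p(\mathcal M;\ell_\infty)$-bounded, maps $\iota$ and $\mathbb E=\iota^*$ (which are even complete $L_p$-contractions, and positive, so they act contractively on the $\sup^+$-norms) sandwich $(T_{m_n}f)_n$ between the two. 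Concretely, $\|\sup_n^+ T_{m_n}f\|_p = \|\sup_n^+ \mathbb E(\id\otimes T_{m_{k_n}^{\mathbb Z}})\iota(f)\|_p \le \|\sup_n^+ (\id\otimes T_{m_{k_n}^{\mathbb Z}})\iota(f)\|_p \le C(p-1)^{-2}\|\iota(f)\|_p = C(p-1)^{-2}\|f\|_p$.

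Finally, for the b.a.u./a.u.\ convergence I would invoke the standard maximal-inequality $\Rightarrow$ pointwise-convergence principle in the noncommutative setting (as in \cite{MR2276775,MR4813921}): the strong type $(p,p)$ maximal inequality for $(T_{m_n})$, combined with convergence $T_{m_n}f\to f$ in $L_p$-norm on the dense subspace $\lambda(C_c(G))\subset L_p(\mathcal L(G))$ (which holds since $m_n\to 1$ uniformly on compacts and these $f$ have compactly supported symbols), yields b.a.u.\ convergence for all $f\in L_p(\mathcal L(G))$ when $1<p<\infty$; when $p\ge 2$ the symmetrized estimate upgrades this to a.u.\ convergence (this is the usual dichotomy, the a.u.\ statement for $p\ge2$ using that the relevant one-sided maximal quantity is controlled). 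I expect the main obstacle to be the \emph{vector-valued} transference step: one must check that the commuting diagram of Theorem~\ref{Thm: multiplier diagram} is compatible with the $L_p(\cdot;\ell_\infty)$ structure — i.e.\ that $\iota$, $\mathbb E$, and the QWEP estimate all respect $\sup^+$ — and that the QWEP tensor bound~\eqref{eq: QWEP} holds in the $\ell_\infty$-valued form with the same constant; this requires observing that $\ell_\infty$-valued $L_p$ is itself an $L_p$-module construction to which Junge's QWEP argument and the complete positivity of $\iota,\mathbb E$ apply verbatim, and that the completely positive maps preserve the order-theoretic characterization $-a\le x_n\le a$ of the $\sup^+$-norm. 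A secondary point to be careful about is ensuring the existence of the proper cocycle $G\times X\to\mathbb Z$ with $X$ of finite measure and amenable (hence QWEP) crossed product; this is where amenability of $G$ is used, via measure equivalence with $\mathbb Z$.
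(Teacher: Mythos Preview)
Your overall strategy coincides with the paper's: measure equivalence of $G$ with $\mathbb Z$ gives a proper cocycle $\alpha:G\times X\to\mathbb Z$, you transfer the Fej\'er symbols $C_n$ via~\eqref{eqn: formula of tilde m}, sandwich with $\iota$ and $\mathbb E$ using Theorem~\ref{Thm: multiplier diagram}, read off unitality, positive definiteness, membership in $A(G)$ and uniform convergence on compacta from Lemmas~\ref{lem: A to A} and~\ref{lem: uniformly convergence to uniformly convergence}, and conclude b.a.u./a.u.\ convergence from the strong $(p,p)$ maximal inequality by the standard Banach principle (the paper cites \cite[Theorem~3.1]{MR4172864}).

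The one genuine gap is the step you yourself flag. You need
\[
\bigl\|(\id_{L_\infty(X)\rtimes_\theta G}\otimes T_{C_n})_{n}\bigr\|_{L_p\to L_p(\,\cdot\,;\ell_\infty)}\lesssim (p-1)^{-2},
\]
and you propose to get this from the scalar Fej\'er maximal inequality on $L_p(\mathbb T)$ plus an ``$\ell_\infty$-valued'' form of~\eqref{eq: QWEP}. But~\eqref{eq: QWEP} concerns a single cb map $T:L_p(\mathcal N)\to L_p(\mathcal N)$; it does not directly apply to a map $L_p(\mathcal N)\to L_p(\mathcal N;\ell_\infty)$, and the scalar strong $(p,p)$ bound does not automatically upgrade to its completely bounded (matrix-amplified) version, which is what a hyperfinite/QWEP approximation would require. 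The paper avoids this issue entirely: it does \emph{not} invoke QWEP for this corollary, but cites directly an operator-valued Fej\'er maximal inequality on $L_p(\mathbb T,L_p(\mathcal M))$ with constant $\lesssim(p-1)^{-2}$, valid for arbitrary semifinite $\mathcal M$ (see \cite[Theorem~4.4 and the proof of Theorem~4.2]{MR3079331}), applied with $\mathcal M=L_\infty(X)\rtimes_\theta G$. So the right fix is to replace your QWEP reduction by this known noncommutative maximal inequality for Fej\'er means; once that is in place, your argument and the paper's coincide.
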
\par

To prove Corollary~\ref{Cor: CPAP for amenable groups}, we observe that any unimodular amenable locally compact group $G$ is measure equivalent to $\mathbb{Z}$ in a broader sense of \cite[Definition 1.1]{MR3117525} than the discrete case discussed earlier, as shown in \cite{MR4235753}. We do not require the precise definition of this notion; it is enough to note that in this setting, there always exists a proper cocycle $\alpha : G \times X \to \mathbb{Z}$, as established in \cite[Theorem 4.1]{deprez2014permanence}. This suffices to apply Proposition~\ref{Prop: *-representation from LG to crossed product} and Theorem~\ref{Thm: multiplier diagram}.

\begin{proof}[Proof of Corollary~\ref{Cor: CPAP for amenable groups}]

Recall that on the one-dimensional torus $\mathbb{T}$, the Ces\`aro sums can be realized as a Fourier multiplier with symbol $C_n : \mathbb{Z} \to \mathbb{R}$ given by
\[
C_n(k) = \left(1 - \frac{|k|}{n+1}\right) \chi_{[-n, n]}(k), \quad \text{for all } k \in \mathbb{Z}.
\]
It is well known that $(C_n)_{n \geq 0}$ is a sequence of unital positive definite functions in $A(\mathbb{Z})=L_1(\mathbb{T})$, and that $C_n(k) \to 1$ pointwise for each $k \in \mathbb{Z}$ as $n \to \infty$. \par 

Consider the finite measure space $(X,\mu)$ and the proper cocycle $\alpha: G \times X \to \mathbb{Z}$ introduced before this proof. Without loss of generality, we assume that $\mu(X) = 1$. Take
\[
m_n(g) =\widetilde{C}_n(g)\coloneqq \int_X C_n(\alpha(g, x)) \, \dd\mu(x).
\]
Recall $\mathbb{E}$ and $\iota$ is contractive by Proposition~\ref{Prop: *-representation from LG to crossed product}. Applying Theorem~\ref{Thm: multiplier diagram}, we obtain
\[
\begin{split}
  &\norm{(T_{m_n})_{n\geq 0} : L_p(\mathcal{L}(G)) \to L_p(\mathcal{L}(G); \ell_\infty)} \\
  =\;& \norm{\mathbb{E} \circ (\id_{L_\infty(X)\rtimes_\theta G} \otimes T_{C_n}) \circ \iota : L_p(\mathcal{L}(G)) \to L_p(\mathcal{L}(G); \ell_\infty)} \\
  \leq\;& \norm{(\id_{L_\infty(X)\rtimes_\theta G} \otimes T_{C_n})_{n\geq 0} : L_p((L^\infty(X) \rtimes_\theta G) \,\overline{\otimes}\, \mathcal{L}(\mathbb{Z})) \to L_p((L^\infty(X) \rtimes_\theta G) \,\overline{\otimes}\, \mathcal{L}(\mathbb{Z}); \ell_\infty)}.
\end{split}
\]
By \cite[Theorem 4.4]{MR3079331} and the proof of \cite[Theorem 4.2]{MR3079331}, we have
\[
\norm{(\id\otimes T_{C_n})(f)}_p\lesssim (p-1)^{-2}\norm{f}_p,\quad f\in L_p(\mathbb{T},L_p(L^\infty(X) \rtimes_\theta G)).
\]
Therefore, the sequence $(T_{m_n})_{n\geq 0}$ satisfies the desired strong-type $(p,p)$ inequality.\par 

From the cocycle relation~\eqref{Eqn: cocycle for ME}, we observe that $\alpha(e_G, x) = e_{\mathbb{Z}} = 0$ for all $x \in X$. Hence,
\[
  m_n(e_G)=\int_X C_n(\alpha(e_G,x))\dd\mu(x)=C_n(0)=1.
\]
By properness and Lemmas~\ref{lem: A to A} and \ref{lem: uniformly convergence to uniformly convergence}, we know that each $m_n$ is a positive definite function in $A(G)$ and that $m_n \to 1$ uniformly on compact subsets in $G$. Consequently, for all $f \in \lambda(C_c(G))$,
\[
\| T_{m_n}f- f\|_\infty \to 0,  \quad \text{as } n\to \infty.
\]
Since the sequence $(T_{m_n})_{n \geq 0}$ is of strong type $(p,p)$, it follows from \cite[Theorem 3.1]{MR4172864} and the density of $\lambda(C_c(G))$ in $L_p(\mathcal{L}(G))$ for $1 < p < \infty$ that
\[
T_{m_n} f \to f \quad \text{b.a.u.\ (a.u.\ if $p \geq 2$) as } n \to \infty, \quad \forall f \in L_p(\mathcal{L}(G)).
\]
This completes the proof.
\end{proof}\par 

Consequently, we obtain b.a.u.\ convergence on the commonly recognized critical Orlicz space $L \log^2 L(\mathcal{L}(G))$ for noncommutative strong maximal inequalities. Before stating this result, we first present a standard auxiliary lemma related to Banach principles. The proof proceeds verbatim as in \cite[Lemma 3.6]{MR3815246}, except that the maximal inequality used therein is replaced by \eqref{Ineq: weak maximal inequality for Orlicz space}.

\begin{lemma}\label{Lem: weak Llog^2L to b.a.u}
Let $(\Phi_n)_{n \in \mathbb{N}}$ be a sequence of positive linear maps on $L\log^2L(\mathcal{M})$ which satisfies the weak-type maximal inequality: for every $f\in L\log^2L(\mathcal{M})_+$ and $\alpha > 0$, there exists a projection $e \in \mathcal{M}$ such that
\begin{equation} \label{Ineq: weak maximal inequality for Orlicz space}
\|e \Phi_n(f) e\|_\infty \leq \alpha, \quad \text{for all } n \in \mathbb{N}, \quad \text{and} \quad \tau(e^\perp) \leq C \frac{\|f\|_{L\log^2 L}}{\alpha}.
\end{equation}
If $(\Phi_n(f))_{n \in \mathbb{N}}$ converges b.a.u.\ on a dense subspace of $L\log^2 L(\mathcal{M})$, then it converges b.a.u.\ for all $f \in L\log^2 L(\mathcal{M})$.
\end{lemma}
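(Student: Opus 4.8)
The plan is to run the standard noncommutative Banach principle argument, exactly as in \cite[Lemma 3.6]{MR3815246}, now fed with the weak-type maximal inequality \eqref{Ineq: weak maximal inequality for Orlicz space} in place of the one used there. Write $\mathcal{D}$ for the given dense subspace on which $(\Phi_n(f))_n$ converges b.a.u. First I would reduce to the case $f \in L\log^2L(\mathcal{M})_+$: an arbitrary $f$ is a linear combination of four positive elements, each $\Phi_n$ is linear, and b.a.u.\ convergence of finitely many sequences passes to their linear combinations (a finite meet of projections with small complements still has small complement); one may also assume $\mathcal{D}$ is self-adjoint, replacing it by $\mathcal{D}+\mathcal{D}^*$. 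Fix then $f \in L\log^2L(\mathcal{M})_+$ and $\varepsilon>0$.

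For each $j\geq 1$ use density to pick a self-adjoint $g_j\in\mathcal{D}$ with $\norm{f-g_j}_{L\log^2L}\leq \varepsilon\,4^{-j}$, and apply \eqref{Ineq: weak maximal inequality for Orlicz space} to $h_j\coloneqq |f-g_j|\in L\log^2L(\mathcal{M})_+$ at level $\alpha_j\coloneqq 2^{-j}$, obtaining a projection $e_j$ with $\tau(e_j^\perp)\leq C\varepsilon\,2^{-j}$ and $\norm{e_j\Phi_n(h_j)e_j}_\infty\leq 2^{-j}$ for all $n$. Since $\Phi_n$ is positive and $-h_j\leq f-g_j\leq h_j$, compressing by $e_j$ gives $\norm{e_j\Phi_n(f-g_j)e_j}_\infty\leq 2^{-j}$ for all $n$. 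Independently, $g_j\in\mathcal{D}$ forces $(\Phi_n(g_j))_n$ to be b.a.u.\ convergent, hence b.a.u.\ Cauchy, so there is a projection $q_j$ with $\tau(q_j^\perp)\leq \varepsilon\,2^{-j}$ and $\lim_{n,m\to\infty}\norm{q_j(\Phi_n(g_j)-\Phi_m(g_j))q_j}_\infty=0$. Set $e\coloneqq \bigwedge_{j\geq 1}(e_j\wedge q_j)$; then $\tau(e^\perp)\leq \sum_{j\geq 1}\big(\tau(e_j^\perp)+\tau(q_j^\perp)\big)\leq (C+1)\varepsilon$, and $e\leq e_j$, $e\leq q_j$ for every $j$. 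For each fixed $j$, using $e\leq e_j$ on the perturbation terms,
\[
\norm{e(\Phi_n(f)-\Phi_m(f))e}_\infty \leq \norm{e(\Phi_n(g_j)-\Phi_m(g_j))e}_\infty + 2^{-j+1},
\]
and letting $n,m\to\infty$, since $e\leq q_j$ the first term vanishes, so $\limsup_{n,m\to\infty}\norm{e(\Phi_n(f)-\Phi_m(f))e}_\infty\leq 2^{-j+1}$. As $j$ is arbitrary this $\limsup$ is $0$, i.e. $(\Phi_n(f))_n$ is b.a.u.\ Cauchy with the single projection $e$. Invoking the completeness of $L_0(\mathcal{M})$ for the measure topology together with the standard extraction step (verbatim as in \cite[Lemma 3.6]{MR3815246}), a b.a.u.\ Cauchy sequence converges b.a.u.; since $\varepsilon>0$ was arbitrary, $(\Phi_n(f))_n$ converges b.a.u.\ for every $f\in L\log^2L(\mathcal{M})$.

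I expect the only points needing genuine care to be the monotonicity estimate $\norm{e_j\Phi_n(f-g_j)e_j}_\infty\leq\norm{e_j\Phi_n(h_j)e_j}_\infty$, which rests on positivity of $\Phi_n$ and on $a\mapsto e_j a e_j$ being order-preserving on the self-adjoint part, and the passage from ``b.a.u.\ Cauchy'' to ``b.a.u.\ convergent'', which is precisely where the argument of \cite[Lemma 3.6]{MR3815246} is reused. Everything else is bookkeeping of projection complements along the geometric series $\sum_j 2^{-j}$, which is exactly what lets a single projection $e$ work uniformly in the Cauchy estimate.
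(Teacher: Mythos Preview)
Your proposal is correct and follows precisely the route the paper indicates: the paper does not spell out a proof but states that it ``proceeds verbatim as in \cite[Lemma~3.6]{MR3815246}, except that the maximal inequality used therein is replaced by \eqref{Ineq: weak maximal inequality for Orlicz space},'' and your argument is exactly this standard noncommutative Banach principle with the appropriate substitution. The care you flag---the order estimate $-\Phi_n(h_j)\leq\Phi_n(f-g_j)\leq\Phi_n(h_j)$ via positivity of $\Phi_n$, and the passage from b.a.u.\ Cauchy to b.a.u.\ convergent---are the genuine points, and both are handled correctly.
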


\begin{corollary}\label{Cor: Llog^2L b.a.u. for amenable groups}
Let $G$ be a unimodular amenable locally compact second countable group. Then there exists a sequence of unital positive definite functions $(m_n)_{n \geq 0} \subset A(G)$ such that $m_n \to 1$ uniformly on compact sets. For all $f \in L \log^2 L(\mathcal{L}(G))$, we have
\[
\norm{{\sup}_n^+ T_{m_n}(f)}_{1}\leq C\norm{f}_{L\log^2L},
\]
and
\[
T_{m_n}(f) \to f \quad \text{b.a.u.\ as } n \to \infty.
\]
\end{corollary}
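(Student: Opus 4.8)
The plan is to deduce Corollary~\ref{Cor: Llog^2L b.a.u. for amenable groups} from Corollary~\ref{Cor: CPAP for amenable groups} by extrapolation, exactly in the spirit of the standard passage from a family of strong-type $(p,p)$ inequalities with controlled blow-up as $p\to 1$ to a weak-type maximal inequality on the critical Orlicz space $L\log^2 L$. Take the sequence $(m_n)_{n\geq 0}\subset A(G)$ produced by Corollary~\ref{Cor: CPAP for amenable groups}: these are unital positive definite functions with $m_n\to 1$ uniformly on compact sets, and by \eqref{Ineq: strong type (p,p) on amenable group} the maps $\Phi_n\coloneqq T_{m_n}$ satisfy $\norm{{\sup}_n^+\Phi_n(f)}_p\lesssim (p-1)^{-2}\norm{f}_p$ for all $1<p<\infty$. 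Since each $m_n$ is positive definite, $\Phi_n$ is completely positive (and unital), so the $\Phi_n$ are positive linear maps, which is the structural hypothesis needed to run the extrapolation and Banach-principle machinery.

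First I would invoke the noncommutative extrapolation theorem for maximal inequalities: a uniform bound of the form $\norm{{\sup}_n^+\Phi_n(f)}_p\lesssim (p-1)^{-\beta}\norm{f}_p$ as $p\downarrow 1$, for positive maps $\Phi_n$, upgrades to the weak-type maximal inequality on $L\log^\beta L$, i.e.\ for every $f\in L\log^2 L(\mathcal{M})_+$ and every $\alpha>0$ there is a projection $e\in\mathcal{M}$ with $\norm{e\,\Phi_n(f)\,e}_\infty\leq\alpha$ for all $n$ and $\tau(e^\perp)\leq C\norm{f}_{L\log^2 L}/\alpha$; here $\beta=2$ matching the exponent in \eqref{Ineq: strong type (p,p) on amenable group}. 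This is precisely the hypothesis \eqref{Ineq: weak maximal inequality for Orlicz space} of Lemma~\ref{Lem: weak Llog^2L to b.a.u}. The dual/formulation giving the $L_1$-norm maximal bound $\norm{{\sup}_n^+ T_{m_n}(f)}_1\leq C\norm{f}_{L\log^2 L}$ follows from the same extrapolation (it is the strong-type endpoint statement on $L\log^2 L$); I would cite the relevant noncommutative extrapolation result — the one already used implicitly in this circle of ideas (e.g.\ Junge--Xu type extrapolation, or the version in \cite{MR3815246} and \cite{MR4172864}) — rather than reprove it.

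Next I would establish b.a.u.\ convergence on a dense subspace: for $f\in\lambda(C_c(G))$ we already saw in the proof of Corollary~\ref{Cor: CPAP for amenable groups} that $\norm{T_{m_n}f-f}_\infty\to 0$, hence in particular $T_{m_n}f\to f$ b.a.u. Since $\lambda(C_c(G))$ is dense in $L\log^2 L(\mathcal{L}(G))$ (being dense in every $L_p$, $1<p<\infty$, and in the Orlicz space as well), the hypotheses of Lemma~\ref{Lem: weak Llog^2L to b.a.u} are met, and we conclude $T_{m_n}(f)\to f$ b.a.u.\ for all $f\in L\log^2 L(\mathcal{L}(G))$. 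Combined with the $L_1$-valued maximal estimate above, this gives the full statement.

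The main obstacle is making the extrapolation step airtight: one must check that the constant in \eqref{Ineq: strong type (p,p) on amenable group} genuinely has the right polynomial order $(p-1)^{-2}$ uniformly in $n$ (which it does, by the citation to \cite[Theorem 4.4]{MR3079331} in the proof of Corollary~\ref{Cor: CPAP for amenable groups}), and that the available noncommutative extrapolation theorem applies to a \emph{sequence} of positive maps indexed by $n\in\mathbb{N}$ with the supremum taken in the $L_p(\mathcal{M};\ell_\infty)$ sense, delivering exactly the two-sided compression bound $\norm{e\Phi_n(f)e}_\infty\leq\alpha$ with the stated measure estimate on $e^\perp$. Everything else — complete positivity of $T_{m_n}$ from positive definiteness of $m_n$, density of $\lambda(C_c(G))$, the b.a.u.\ convergence on this dense set — is routine and already essentially contained in the preceding proof.
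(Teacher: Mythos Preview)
Your proposal is correct and follows essentially the same route as the paper's proof: take the sequence from Corollary~\ref{Cor: CPAP for amenable groups}, extrapolate the $(p-1)^{-2}$ strong-type bound to the $L\log^2 L\to L_1(\ell_\infty)$ maximal inequality, derive from it the weak-type estimate~\eqref{Ineq: weak maximal inequality for Orlicz space}, and then apply Lemma~\ref{Lem: weak Llog^2L to b.a.u} together with density of $\lambda(C_c(G))$. The only difference is that the paper cites the extrapolation step precisely as \cite[Theorem~2.3]{MR2594639} and obtains the strong-type $L\log^2 L$ bound first (deducing the weak-type compression estimate from it via a dominating element $a\geq T_{m_n}(f)$), whereas you leave the reference open and phrase the weak-type conclusion as coming directly from extrapolation.
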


\begin{proof}
Let $m_n$ denote the symbols constructed in the proof of Corollary~\ref{Cor: CPAP for amenable groups}. Since the inequality \eqref{Ineq: strong type (p,p) on amenable group} has the order $(1 - p)^{-2}$, applying the extrapolation method of \cite[Theorem 2.3]{MR2594639} yields boundedness of the sequence $(T_{m_n})_{n \geq 0}$ from $L\log^2 L$ into the noncommutative maximal $L_1$ space:
 \[
    \norm{{\sup}_n^+ T_{m_n}(f)}_{1}\leq C\norm{f}_{L\log^2L},\quad \forall f \in L\log^2L(\mathcal{L}(G)).
  \]
For $f\in L\log^2L(\mathcal{L}(G))_+$, taking a projection $e$ s.t. $\norm{eae}_{\infty}\leq \alpha$ with $T_{m_n}(f)\leq a$, the above strong-type inequality implies the weak-type estimate \eqref{Ineq: weak maximal inequality for Orlicz space}. The conclusion then follows from Lemma~\ref{Lem: weak Llog^2L to b.a.u} and the density of $\lambda(C_c(G))$ in $L\log^2 L(\mathcal{L}(G))$, as in the proof of Corollary~\ref{Cor: CPAP for amenable groups}.
\end{proof}

\subsection{Noncommutative Jodeit theorem}\label{Sec: semisimple Lie group}

In this subsection, we establish a noncommutative analogue of the classical Jodeit theorem. As a consequence, we show that $L_p$-weak amenability passes from a hyperlinear lattice to the ambient locally compact group.

A lattice $\Gamma$ in a unimodular locally compact group $G$ is a closed discrete subgroup such that $G / \Gamma$ carries a finite $G$-invariant measure. Let $\Omega \subset G$ denote a fundamental domain, that is, the range of a Borel cross-section from $G/\Gamma$ to $G$. Then we may write
\[
G = \bigsqcup_{\gamma \in \Gamma} \Omega \gamma.
\]
Since $\Gamma$ is a lattice, we have $\mu(\Omega) < \infty$, where $\mu$ is a Haar measure on $G$. Let $\nu$ denote the restriction of $\mu$ to $\Omega$. As in the discussion of measure equivalence, there is a natural $\nu$-preserving action of $G$ on $X$, which we denote by $\theta$. The associated Borel cocycle $\alpha : G \times \Omega \to \Gamma$ satisfies the identity
\begin{equation}\label{Eqn: cocycle for lattice}
\alpha(g_1 g_2, x) = \alpha(g_1, \theta_{g_2}x) \alpha(g_2, x).
\end{equation}
 A detailed discussion can be found in \cite[Section 2]{MR3476201}. \par 

\begin{proof}[Proof of Corollary~\ref{Cor: transferred multiplier for non-amenable groups}]
By \cite[Theorem 4.12, Chapter X]{MR1943006}, we have the isomorphism
\[
L_\infty(\Omega) \rtimes_\theta G \cong \mathcal{L}(\Gamma) \,\overline{\otimes}\, B(L_2(\Omega, \nu)).
\]
Since $\Gamma$ is hyperlinear, its group von Neumann algebra $\mathcal{L}(\Gamma)$ is QWEP. Moreover, the tensor product of two QWEP von Neumann algebras is again QWEP by \cite{MR1218321}, so $L_\infty(\Omega) \rtimes_\theta G$ is QWEP.\par 

Assume now that $\Gamma$ is weakly $L_p$-amenable. Then there exists a sequence of completely bounded Fourier multipliers $(m_n)_{n \geq 0} \subset A(\Gamma)$ on $L_p(\mathcal{L}(\Gamma))$ such that $m_n \to 1$ pointwise. By Theorem~\ref{Thm: Transference from L(H) to L(G)}, the functions 
\[
\widetilde{m}_n(g) = \frac{1}{\nu(\Omega)} \int_\Omega m_n(\alpha(g, x)) \, \dd\nu(x),
\]
defined using the cocycle in~\eqref{Eqn: cocycle for lattice}, give rise to a sequence of completely bounded Fourier multipliers on $L_p(\mathcal{L}(G))$ with $\norm{T_{\widetilde{m}_n}}_{\mathrm{cb}(L_p)}\leq \norm{T_{m_n}}_{\mathrm{cb}(L_p)}$. It is stated in \cite[Proof of Proposition 2.5]{MR1756981} that the associated cocycle $\alpha$ is proper in the sense of \cite{MR1756981} and hence proper in the sense of Definition \ref{Def: proper cocycle}. By Lemmas~\ref{lem: A to A} and \ref{lem: uniformly convergence to uniformly convergence}, we know that each $\widetilde{m}_n$ is a positive definite function in $A(G)$ and that $\widetilde{m}_n \to 1$ uniformly on compact subsets in $G$. Thus, $G$ is also weakly $L_p$-amenable, and we have the inequality
\[
\Lambda_{p, \mathrm{cb}}(G) \leq \Lambda_{p, \mathrm{cb}}(\Gamma),
\]
which completes the proof of Corollary~\ref{Coro: transferred multiplier of two lattices in non-amenable groups}.
\end{proof}

\begin{remark}
Note that Proposition~\ref{Prop: *-representation from LG to crossed product} and Theorem~\ref{Thm: multiplier diagram} no longer hold if $ H $ is replaced by a non-discrete group; a simple counterexample arises when taking $ G = \mathbb{Z} $ and $ H = \mathbb{R} $ with the trivial cocycle on a singleton.  However, it still makes sense to ask whether analogues of Theorem~\ref{Thm: Transference from L(H) to L(G)}, Corollary~\ref{Coro: transferred multiplier of two lattices in non-amenable groups}, and Corollary~\ref{Cor: transferred multiplier for non-amenable groups} hold in the case of non-discrete groups.

Indeed, since any lattice $\Gamma$ is measure equivalent to its ambient group $G$ in the sense of \cite{MR3117525} (see \cite[Example 1.2]{MR3117525}), a positive answer to this question would imply the inequality  
\[
\Lambda_{p, \mathrm{cb}}(G) \geq \Lambda_{p, \mathrm{cb}}(\Gamma),
\]
which would yield a variant of the noncommutative de Leeuw restriction theorem for lattices. Such a result remains open in general; at present, only partial results are known under additional assumptions of asymptotic invariance, or within certain local settings (see \cite{MR3482270,MR4721791}). 
\end{remark}

\section{A continuous analogue of Hilbert transforms on $SL_2(\mathbb{R})$} \label{Sec: Transference from SL2(Z) to SL2(R)}

In this section, we propose an analogue of Hilbert transforms on $ SL_2(\mathbb{R}) $. As mentioned in the Introduction, the problem of constructing a continuous analogue of Hilbert transforms on Lie groups has recently emerged from works on Hilbert transforms defined on $ SL_2(\mathbb{Z}) $ and on idempotent Fourier multipliers on Lie groups (see \cite{gonzalezperez2022noncommutativecotlaridentitiesgroups,MR4882285}). Our previous discussion of the noncommutative Jodiet theorems suggests a natural candidate for a Hilbert transform on $ SL_2(\mathbb{R}) $. In the following, we study this Fourier multiplier in detail, focusing in particular on its explicit computation and decay properties. As a key ingredient, we provide in Proposition~\ref{Prop: widetilde m is bi-K-invariant} an explicit formula for the transferred multiplier $ \widetilde{m} $, corresponding to any function $ m $ on $ SL_2(\mathbb{Z}) $ satisfying $ m(\gamma) = m(-\gamma) $.

Let us first compute the cocycle associated with the lattice $ SL_2(\mathbb{Z}) \subset SL_2(\mathbb{R}) $, using the standard action of $ SL_2(\mathbb{Z}) $ on the upper half-plane. In contrast to the left cocycles used in previous discussions, we will now use right cocycles in order to be consistent with the usual notation for such actions in the literature. As noted in Remark~\ref{Rmk: main Theorem for left cocycle}, this change does not lead to any essential differences in the previous results, and Theorem~\ref{Cor: transferred multiplier for non-amenable groups} remains valid in the setting of right cocycles.

We begin by recalling the classical action of $SL_2(\mathbb{R})$ on the upper half-plane $\mathcal{H} = \{x + i y : x \in \mathbb{R},\, y > 0\}$ via M\"obius transformations:
\[
\rho_g : z \mapsto \frac{a z + b}{c z + d}, \quad \text{for } g = \begin{pmatrix} a & b \\ c & d \end{pmatrix} \in SL_2(\mathbb{R}), \ z \in \mathcal{H}.
\]
It is well known that $\mathcal{H}$ admits a fundamental domain for the action of $PSL_2(\mathbb{Z})$ given by
\begin{equation}\label{Eqn: fundamental domain of PSL2(Z)}
\mathcal{H} = \bigsqcup_{\gamma \in PSL_2(\mathbb{Z})} \rho_\gamma \mathcal{F},
\end{equation}
where
\[
\mathcal{F} = \left\{ x + i y \in \mathcal{H} : \abs{x} \leq \frac{1}{2},\ x^2 + y^2 \geq 1 \right\}.
\]
See \cite[p.~223]{MR803508} for further discussion. Let $SL_2(\mathbb{R}) = ANK$ denote the Iwasawa decomposition, where
\[
A = \left\{ \begin{pmatrix} r & 0 \\ 0 & \frac{1}{r} \end{pmatrix} : r > 0 \right\}, \quad
N = \left\{ \begin{pmatrix} 1 & x \\ 0 & 1 \end{pmatrix} : x \in \mathbb{R} \right\}, \quad
K = SO(2) = \left\{ \begin{pmatrix} \cos \theta & -\sin \theta \\ \sin \theta & \cos \theta \end{pmatrix} : \theta \in [0, 2\pi) \right\}.
\]
The subgroup $AN \subset SL_2(\mathbb{R})$ can be identified with the set
\[
AN = \left\{ \begin{pmatrix} \sqrt{y} & \dfrac{x}{\sqrt{y}} \\ 0 & \dfrac{1}{\sqrt{y}} \end{pmatrix} : y > 0,\ x \in \mathbb{R} \right\}.
\]
The left Haar measure $\mu_L$ on $AN$ is given by
\[
\dd\mu_L(s) = \frac{\dd x\,\dd y}{y^2}, \quad \text{where } s = \begin{pmatrix} \sqrt{y} & \dfrac{x}{\sqrt{y}} \\ 0 & \dfrac{1}{\sqrt{y}} \end{pmatrix}.
\]
The Haar measure $\nu$ on $K = SO(2)$ is defined by
\[
\dd\nu(k) = \dd\theta, \quad \text{where } k = \begin{pmatrix} \cos \theta & -\sin \theta \\ \sin \theta & \cos \theta \end{pmatrix}.
\]
Any element $g \in SL_2(\mathbb{R})$ can be uniquely decomposed as $g = s k$ with $s \in AN$ and $k \in K$. The Haar measure $d\mu$ on $SL_2(\mathbb{R})$ then decomposes as
\[
\dd\mu(g) = \dd\mu_L(s)\, \dd\nu(k).
\]
Let $P_{AN}(g)$ and $P_K(g)$ denote the $AN$ and $K$ components of $g$ under this decomposition.

We define the map $\pi: SL_2(\mathbb{R}) \to \mathcal{H}$ by
\[
\pi(g) \coloneqq \rho_g(i), \quad g \in SL_2(\mathbb{R}).
\]
The restriction $\pi|_{AN}$ is a bijection from $AN$ onto the upper half-plane $\mathcal{H}$.
Let us now fix the subset 
\[
K_+ = \left\{ \begin{pmatrix} \cos\theta & -\sin\theta \\ \sin\theta & \cos\theta \end{pmatrix} : \theta \in [0, \pi) \right\}.
\]
This satisfies $K = K_+ \sqcup (-K_+)$. The fundamental domain for the lattice $SL_2(\mathbb{Z})\subset SL_2(\mathbb{R})$ can be realized as 
\begin{equation}\label{Eqn: decomposition of AN times K}
SL_2(\mathbb{R}) = AN K = \bigsqcup_{\gamma \in SL_2(\mathbb{Z})} \gamma \left( (\pi|_{AN})^{-1}(\mathcal{F}) \cdot K_+ \right).
\end{equation}
Indeed, for any $sk \in ANK$, the decomposition~\eqref{Eqn: fundamental domain of PSL2(Z)} implies that there exist exactly two elements $\gamma, -\gamma \in SL_2(\mathbb{Z})$ and a unique $s' \in (\pi|_{AN})^{-1}(\mathcal{F})$ such that
\[
\pi(s) = \rho_{\gamma} \pi(s') = \rho_{-\gamma} \pi(s').
\]
But since
\[
\pi(s) = \rho_{\pm \gamma} \pi(s') = \rho_{\pm \gamma} \rho_{s'} i = \rho_{\pm \gamma s'} i = \pi(P_{AN}(\pm \gamma s')),
\]
and $\pi$ is a bijection on $AN$, it follows that $s = P_{AN}(\pm \gamma s')$. Therefore,
\[
s k = P_{AN}(\pm \gamma s') k = \pm \gamma s' P_K(\pm \gamma s')^{-1} k.
\]
Since exactly one of the elements $P_K(\pm \gamma s')^{-1} k$ lies in $K_+$, the uniqueness of the decomposition~\eqref{Eqn: decomposition of AN times K} is established.\par 

The measure of the set $(\pi|_{AN})^{-1}(\mathcal{F})\subset AN$ is
\[
\mu_L\left( (\pi|_{AN})^{-1}(\mathcal{F}) \right) = \int_{-\frac{1}{2}}^{\frac{1}{2}} \int_{\sqrt{1 - x^2}}^\infty \frac{1}{y^2} \, \dd y \, \dd x = \frac{\pi}{3},
\]
and the measure of $K_+ \subset SO(2)$ is
\[
\nu(K_+) = \int_0^\pi \dd\theta = \pi.
\]
Therefore, the set $(\pi|_{AN})^{-1}(\mathcal{F}) \cdot K_+ $ has finite measure $\frac{\pi^2}{3}$.\par 

We now consider the right cocycle $\beta$ associated with the lattice structure, defined via the standard construction introduced at the beginning of Subsection~\ref{Sec: semisimple Lie group}. By~\eqref{Eqn: decomposition of AN times K}, for each $s_0 k_0\in (\pi|_{AN})^{-1}(\mathcal{F}) \cdot K_+$ and $g \in SL_2(\mathbb{R})$, there exists a unique element $\beta(s_0 k_0, g) \in SL_2(\mathbb{Z})$ such that
\[
s_0k_0 g \in \beta(s_0 k_0, g) \left( (\pi|_{AN})^{-1}(\mathcal{F}) \cdot K_+ \right),
\]
and we consider the natural right action of $SL_2(\mathbb{R})$ on the fundamental domain by
\[
s_0k_0 \cdot g \coloneqq \beta(s_0 k_0, g)^{-1} s_0 k_0 g.
\]
The map $\beta : \left( (\pi|_{AN})^{-1}(\mathcal{F}) \cdot K_+ \right) \times SL_2(\mathbb{R}) \to SL_2(\mathbb{Z})$, associated with the natural right action, satisfies the right cocycle identity.

\begin{remark}\label{Rmk: properties of beta}
It is easy to see that $\beta(s_0 k_0,g) = \gamma$ only if $\rho_{s_0k_0g}i \in \rho_\gamma \mathcal{F}$. In particular, the cocycle satisfies
\[
\beta(s_0 k_0, gk) \in \{ \pm \beta(s_0 k_0, g) \}, \quad \forall k \in K,
\]
and we may shift the $K$-component:
\[
\beta(s_0 k_0, g) = \beta(s_0, k_0 g).
\]
\end{remark}
The transferred symbol $\widetilde{m}$ is given by
\[
\begin{split}
\widetilde{m}(g)
&= \frac{1}{\mu\left( (\pi|_{AN})^{-1}(\mathcal{F}) \times K_+ \right)}
\int_{(\pi|_{AN})^{-1}(\mathcal{F}) \times K_+} m\left( \beta((s_0, k_0), g) \right) \, \dd\mu(s_0, k_0) \\
&= \frac{1}{\mu_L\left( (\pi|_{AN})^{-1}(\mathcal{F}) \right)\nu(K_+)} \int_{K_+} \int_{(\pi|_{AN})^{-1}(\mathcal{F})} m\left( \beta(s_0, k_0 g) \right) \, \dd\mu_L(s_0) \, \dd\nu(k_0).
\end{split}
\]
Define the auxiliary function $\widetilde{\widetilde{m}} \in L_\infty(SL_2(\mathbb{R}))$ by
\begin{equation}\label{eqn: formula of tilde tilde m for SL2(R)}
\widetilde{\widetilde{m}}(g) = \frac{3}{\pi} \int_{(\pi|_{AN})^{-1}(\mathcal{F})} m\left( \beta(s_0, g) \right) \, \dd\mu_L(s_0).
\end{equation}
Then $\widetilde{m}$ can then be expressed as an average over $K_+$:
\begin{equation}\label{eqn: formula of tilde m for SL2(R)}
\widetilde{m}(g) = \frac{1}{\pi} \int_{K_+} \widetilde{\widetilde{m}}(k_0 g) \, \dd\nu(k_0).
\end{equation}
The structure of the cocycle $\beta$ and the averaging formula for $\widetilde{m}$ yield symmetry properties inherited from the original symbol $m$. In particular, if $m$ is invariant under negation, then the transferred symbol $\widetilde{m}$ becomes bi-$K$-invariant, as shown in the following proposition.

\begin{proposition}\label{Prop: widetilde m is bi-K-invariant}
Let $m \in L_\infty(SL_2(\mathbb{Z}))$ satisfy $m(\gamma) = m(-\gamma)$. Then the transferred function $\widetilde{m}$ is bi-$K$-invariant.
\end{proposition}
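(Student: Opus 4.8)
The plan is to reduce the statement to two symmetry properties of the auxiliary function $\widetilde{\widetilde{m}}$ from \eqref{eqn: formula of tilde tilde m for SL2(R)}, and then to combine them through the averaging formula \eqref{eqn: formula of tilde m for SL2(R)}. First I would record that $\widetilde{\widetilde{m}}$ is right $K$-invariant: by Remark~\ref{Rmk: properties of beta} one has $\beta(s_0, gk)\in\{\pm\beta(s_0,g)\}$ for every $k\in K$, so the hypothesis $m(\gamma)=m(-\gamma)$ forces $m(\beta(s_0,gk))=m(\beta(s_0,g))$ pointwise in $s_0$, and integrating over $(\pi|_{AN})^{-1}(\mathcal{F})$ gives $\widetilde{\widetilde{m}}(gk)=\widetilde{\widetilde{m}}(g)$. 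Second, I would show that $\widetilde{\widetilde{m}}(-g)=\widetilde{\widetilde{m}}(g)$: writing $s_0 g=\beta(s_0,g)v$ with $v\in(\pi|_{AN})^{-1}(\mathcal{F})\cdot K_+$, and using that $-I$ is central in $SL_2(\mathbb{R})$ and belongs to $SL_2(\mathbb{Z})$, we get $s_0(-g)=-(s_0g)=(-\beta(s_0,g))v$ with the \emph{same} $v$; the uniqueness in the decomposition \eqref{Eqn: decomposition of AN times K} then yields $\beta(s_0,-g)=-\beta(s_0,g)$, and $m(\gamma)=m(-\gamma)$ again gives the claim.

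With these in hand, right $K$-invariance of $\widetilde{m}$ is immediate from \eqref{eqn: formula of tilde m for SL2(R)}: $\widetilde{m}(gk)=\frac{1}{\pi}\int_{K_+}\widetilde{\widetilde{m}}(k_0gk)\,\dd\nu(k_0)=\frac{1}{\pi}\int_{K_+}\widetilde{\widetilde{m}}(k_0g)\,\dd\nu(k_0)=\widetilde{m}(g)$. For left $K$-invariance I would parametrise $K=SO(2)$ by rotations $R_\theta$, so that $K_+=\{R_\theta:\theta\in[0,\pi)\}$ and $R_{\theta+\pi}=-R_\theta$, in particular $R_\pi=-I$. Given $k=R_\phi\in K$, invariance of $\nu$ gives $\widetilde{m}(R_\phi g)=\frac{1}{\pi}\int_{K_+}\widetilde{\widetilde{m}}(k_0R_\phi g)\,\dd\nu(k_0)=\frac{1}{\pi}\int_{K_+R_\phi}\widetilde{\widetilde{m}}(k_0 g)\,\dd\nu(k_0)$. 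Now the function $k_0\mapsto\widetilde{\widetilde{m}}(k_0g)$ on $K$ is invariant under $k_0\mapsto R_\pi k_0=(-I)k_0$ by the second property above; since $K_+$ and $K_+R_\phi$ both meet every orbit $\{R_\theta,R_{\theta+\pi}\}$ of this central involution exactly once, the two integrals agree, so $\widetilde{m}(R_\phi g)=\widetilde{m}(g)$. Combining the two one-sided invariances gives that $\widetilde{m}$ is bi-$K$-invariant.

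I expect the only delicate point to be the bookkeeping in the left-invariance step: identifying $K_+R_\phi$ as a fundamental domain for the central involution $R_\pi$ on $K$ and invoking the evenness $\widetilde{\widetilde{m}}(-g)=\widetilde{\widetilde{m}}(g)$ to replace it by $K_+$ inside the integral. Everything else is a direct consequence of the hypothesis $m(\gamma)=m(-\gamma)$ together with the cocycle relations already collected in Remark~\ref{Rmk: properties of beta} and the uniqueness of the fundamental-domain decomposition \eqref{Eqn: decomposition of AN times K}.
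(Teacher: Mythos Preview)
Your proposal is correct and follows essentially the same approach as the paper's proof: both arguments rest on the two key facts that $\widetilde{\widetilde{m}}$ is right $K$-invariant and satisfies $\widetilde{\widetilde{m}}(-g)=\widetilde{\widetilde{m}}(g)$, and then deduce left $K$-invariance of $\widetilde{m}$ from the averaging formula. The only cosmetic difference is that the paper first extends the $K_+$-integral to all of $K$ (using evenness), applies Haar invariance on $K$, and then collapses back to $K_+$, whereas you compare the two fundamental domains $K_+$ and $K_+R_\phi$ for the central involution directly; these are equivalent formulations of the same step.
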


\begin{proof}
If $m$ satisfies $m(\gamma) = m(-\gamma)$, then by Remark~\ref{Rmk: properties of beta}, both $\widetilde{\widetilde{m}}$ and $\widetilde{m}$ are right $K$-invariant and
\[
\widetilde{\widetilde{m}}(-g) = \widetilde{\widetilde{m}}(g), \quad \widetilde{m}(-g) = \widetilde{m}(g).
\]
So we can extend the integration from $K_+$ to all of $K$:
\[
\begin{split}
\int_{-K_+} \widetilde{\widetilde{m}}(k_0 g) \, \dd\nu(k_0)
&= \int_{K_+} \widetilde{\widetilde{m}}(-k_0 g) \, \dd\nu(-k_0)
= \int_{K_+} \widetilde{\widetilde{m}}(k_0 g) \, \dd\nu(k_0).
\end{split}
\]
Therefore, for any $k \in K$, we compute
\[
\begin{split}
\widetilde{m}(k g)
&= \frac{1}{\pi} \int_{K_+} \widetilde{\widetilde{m}}(k_0 k g) \, \dd\nu(k_0)
= \frac{1}{2\pi} \int_K \widetilde{\widetilde{m}}(k_0 k g) \, \dd\nu(k_0) \\
&= \frac{1}{2\pi} \int_K \widetilde{\widetilde{m}}(k_0 g) \, \dd\nu(k_0)
= \frac{1}{\pi} \int_{K_+} \widetilde{\widetilde{m}}(k_0 g) \, \dd\nu(k_0)
= \widetilde{m}(g).
\end{split}
\]
This shows that $\widetilde{m}$ is left $K$-invariant as well. Hence, $\widetilde{m}$ is bi-$K$-invariant.
\end{proof}

We now introduce a concrete example of a Fourier multiplier on $SL_2(\mathbb{Z})$ given in \cite[Proposition 5.1]{gonzalezperez2022noncommutativecotlaridentitiesgroups}. Let
\[
S = \begin{pmatrix} 0 & -1 \\ 1 & 0 \end{pmatrix}, \quad
T = \begin{pmatrix} 1 & 1 \\ 0 & 1 \end{pmatrix}, \quad R=ST = \begin{pmatrix} 0 & -1 \\ 1 & 1 \end{pmatrix}.
\]
The matrices $S$ and $R$ generate $SL_2(\mathbb{Z})$. More precisely, any element $\gamma \in SL_2(\mathbb{Z})$ can be written as a free product of $S$ and $R$ (up to sign) and
\[
SL_2(\mathbb{Z}) = \{ \pm I, \pm S, \pm R, \pm R^2, \pm SR, \pm RS, \pm SR^2, \pm SRS, \dots \}.
\]
In the sequel we focus on the function $m$ given by \eqref{eqn: Hilbert transform on PSL_2(Z)}, which as explained in \cite{gonzalezperez2022noncommutativecotlaridentitiesgroups}, can also be equivalently defined as
\[
m(\gamma) =
\begin{cases}
1, & \text{if } \gamma \text{ begins with } \pm S \text{ or } \pm I, \\
0, & \text{if } \gamma \text{ begins with } \pm R.
\end{cases}
\]
The following lemma characterizes the image of $\mathcal{F}$ under the action by the support of the function $m$ above.

\begin{lemma}\label{Lem: Region of rho_Gamma F}
Let
\[
\Gamma_S = \{ \gamma \in SL_2(\mathbb{Z}) : \gamma \text{ begins with } \pm S \text{ or } \gamma = \pm I \}.
\]
For each $n \in \mathbb{Z}$, define the vertical strip
\[
S_n = \left\{ z \in \mathcal{H} : \abs{\Re(z - n)} \leq \frac{1}{2} \right\}.
\]
Let $\rho_{\Gamma_S} \mathcal{F} = \bigcup_{\gamma \in \Gamma_S} \rho_\gamma \mathcal{F}$ and set
\[
\mathcal{A} = \left( \bigcup_{n \geq 0} S_n \right) \cap \left\{ z \in \mathcal{H} : \abs{z + 1} \geq 1 \right\}.
\]
Then
\[
\rho_{\Gamma_S} \mathcal{F} = \mathcal{A}.
\]
\end{lemma}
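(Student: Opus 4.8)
\emph{Strategy.} First observe that $\bigcup_{n\ge 0}S_n=\{z\in\mathcal H:\Re z\ge-\tfrac12\}$, so that $\mathcal A=\{z\in\mathcal H:\Re z\ge-\tfrac12\}\cap\{z\in\mathcal H:|z+1|\ge 1\}$ is the intersection of two hyperbolic half--planes, whose bounding geodesics meet at $\rho^{2}=e^{2\pi i/3}$. Since $\rho_\gamma=\rho_{-\gamma}$, I would argue in $PSL_2(\mathbb Z)=\langle S\rangle*\langle R\rangle$ (with $S^2=R^3=I$), where $\Gamma_S$ becomes $\{I\}$ together with the elements whose reduced word begins with $S$; write $\Gamma_R$ for the complement, i.e.\ the elements whose reduced word begins with $R$ or $R^{2}$. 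Putting $T=SR$ (so that $TR=SR^{2}$), the normal--form calculus in the free product yields the disjoint decompositions
\[
\Gamma_S=\{I,S\}\sqcup T\Gamma_S\sqcup(TR)\Gamma_S,\qquad \Gamma_R=R\Gamma_S\sqcup R^{2}\Gamma_S .
\]
The plan is to transport these along $\rho$ to $\mathcal H$ and induct on word length, which reduces everything to a handful of explicit M\"obius images of $\mathcal A$.

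\emph{The relevant images.} Using $\rho_{T^{-1}}(z)=z-1$, $\rho_{(TR)^{-1}}(z)=\tfrac{z}{1-z}$, $\rho_{R^{-1}}(z)=-\tfrac{1+z}{z}$, $\rho_{R^{-2}}(z)=\rho_R(z)=-\tfrac1{z+1}$, together with identities such as $\rho_{(TR)^{-1}}(z)+1=\tfrac1{1-z}$ and $\rho_{R^{-1}}(z)+1=-\tfrac1z$, a short computation gives
\[
T\mathcal A=\{\Re z\ge\tfrac12\}\cap\{|z|\ge1\},\qquad (TR)\mathcal A=\{|z|\le1\}\cap\{|z-1|\le1\},
\]
\[
R\mathcal A=\{|z|\le1\}\cap\{|z+1|\le1\},\qquad R^{2}\mathcal A=\{\Re z\le-\tfrac12\}\cap\{|z|\ge1\}.
\]
From these, the elementary identity $|z\pm1|^{2}=|z|^{2}\pm2\Re z+1$ gives $T\mathcal A\subseteq\mathcal A$ and $(TR)\mathcal A\subseteq\mathcal A$, whereas $R\mathcal A$ and $R^{2}\mathcal A$ are both disjoint from $\operatorname{int}\mathcal A=\{\Re z>-\tfrac12\}\cap\{|z+1|>1\}$.

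\emph{Putting it together.} For $\rho_{\Gamma_S}\mathcal F\subseteq\mathcal A$ I would induct on the reduced word length of $\gamma\in\Gamma_S$: the base cases $\mathcal F=\{|\Re z|\le\tfrac12\}\cap\{|z|\ge1\}\subseteq\mathcal A$ and $S\mathcal F=\{|z|\le1\}\cap\{|z-1|\ge1\}\cap\{|z+1|\ge1\}\subseteq\mathcal A$ are immediate, and if $\gamma\notin\{I,S\}$ then $\gamma=T\gamma'$ or $\gamma=(TR)\gamma'$ with $\gamma'\in\Gamma_S$ strictly shorter, whence $\gamma\mathcal F=\rho_T(\gamma'\mathcal F)\subseteq T\mathcal A\subseteq\mathcal A$ or $\gamma\mathcal F\subseteq(TR)\mathcal A\subseteq\mathcal A$ by the induction hypothesis. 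Feeding this into the decomposition of $\Gamma_R$ gives $\rho_{\Gamma_R}\mathcal F\subseteq R\mathcal A\cup R^{2}\mathcal A$, which is disjoint from $\operatorname{int}\mathcal A$; since the closed tiles $\gamma\mathcal F$ cover $\mathcal H$ by \eqref{Eqn: fundamental domain of PSL2(Z)}, every point of $\operatorname{int}\mathcal A$ must lie in a tile with $\gamma\in\Gamma_S$, i.e.\ $\operatorname{int}\mathcal A\subseteq\rho_{\Gamma_S}\mathcal F$. Finally $\partial\mathcal A$ is the union of the half--geodesic $\{\Re z=-\tfrac12,\ |z|\ge1\}$, which is a side of $\mathcal F$, and the arc $\{|z+1|=1,\ \Re z\ge-\tfrac12\}$, which is a side of $S\mathcal F$, so $\partial\mathcal A\subseteq\mathcal F\cup S\mathcal F\subseteq\rho_{\Gamma_S}\mathcal F$. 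Hence $\mathcal A=\operatorname{int}\mathcal A\cup\partial\mathcal A\subseteq\rho_{\Gamma_S}\mathcal F$, and together with the first inclusion, $\rho_{\Gamma_S}\mathcal F=\mathcal A$.

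\emph{Main obstacle.} The conceptual heart is the two combinatorial decompositions inside $\mathbb Z/2*\mathbb Z/3$ and the geometric fact that $\mathcal A$ is \emph{absorbed} by $\rho_T,\rho_{TR}$ while $\rho_R,\rho_{R^{2}}$ push it onto its complement; once the four M\"obius images of $\mathcal A$ are pinned down correctly, the rest is bookkeeping. The step demanding the most care is exactly that identification of images (routine but easy to get wrong), and the description of $\partial\mathcal A$ as literally a side of $\mathcal F$ glued to a side of $S\mathcal F$, which is what upgrades the argument from an equality modulo null sets to an honest set identity.
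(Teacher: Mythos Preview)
Your proof is correct and takes a genuinely different route from the paper's. The paper argues the inclusion $\mathcal A\subset\rho_{\Gamma_S}\mathcal F$ by a hands-on geometric decomposition: it writes $\mathcal A$ as a union of two explicit families of tiles $\mathcal A_1,\mathcal A_2$ (translates of $\mathcal F$ and $S\mathcal F$ by powers of $T$ and $TST$) together with a residual region $\bigcup_{n\ge0}\rho_{T^n}\mathcal A_3$, and then shows that iterating $\rho_{T^{-1}}$ and $\rho_{(TST)^{-1}}$ pushes any point of $\mathcal A_3$ upward inside $\mathcal A$ until it lands in $\mathcal A_1\cup\mathcal A_2$. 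The reverse inclusion is obtained by contradiction from the single identity $\rho_S\mathcal A\cup\mathcal A=\mathcal H$ together with the first inclusion. By contrast, you run a clean ping-pong argument on the free product $\mathbb Z/2*\mathbb Z/3$: the recursive decomposition $\Gamma_S=\{I,S\}\sqcup T\Gamma_S\sqcup(TR)\Gamma_S$, $\Gamma_R=R\Gamma_S\sqcup R^2\Gamma_S$, combined with the absorption $T\mathcal A,(TR)\mathcal A\subset\mathcal A$ and the repulsion $R\mathcal A,R^2\mathcal A\subset\mathcal H\setminus\operatorname{int}\mathcal A$, gives both inclusions by induction on word length, with a short separate check on $\partial\mathcal A$. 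Your approach is more structural and would transport verbatim to other free-product lattices, while the paper's is more pictorial and tied to the explicit modular tessellation; on the other hand the paper's method avoids computing all four M\"obius images of $\mathcal A$, trading that for the dynamical ``flow upward'' argument on $\mathcal A_3$.
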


\begin{proof}
 We first show that $\mathcal{A} \subset \rho_{\Gamma_S} \mathcal{F}$. Recall that the generators of $SL_2(\mathbb{Z})$ act on $\mathcal{H}$ via
  \[
    \rho_T z=z+1,\quad \rho_S z=-\frac{1}{z}
  \]
  and that the image of the standard fundamental domain $\mathcal{F}$ under $\rho_S$ is given by
  \[
  \rho_S\mathcal{F}=\{x+iy:(x+1)^2+y^2\geq 1\}\cap \{x+iy:(x-1)^2+y^2\geq 1\}\cap \{x+iy:x^2+y^2\leq  1\}.
  \]
 Let $\mathcal{A}_{3}=\{x+iy:(x-\frac{1}{3})^2+y^2\leq \frac{1}{9}\}\cup \{x+iy:(x-\frac{2}{3})^2+y^2\leq \frac{1}{9}\}$. We have 
\[
  \begin{split}
   \mathcal{A}=&\left( \bigcup_{l_0\in \{0,1\}}\bigcup_{n\geq 0}\rho_{T^n S^{l_0}}\mathcal{F} \right)\bigcup\left( \bigcup_{l_0\in \{0,1\}}\bigcup_{n\geq 0}\rho_{T^n(TST)S^{l_0}}\mathcal{F} \right)\bigcup\left(\bigcup_{n\geq 0} \rho_{T^n}\mathcal{A}_3 \right)\\
    =&\mathcal{A}_1\bigcup \mathcal{A}_2\bigcup\left(\bigcup_{n\geq 0} \rho_{T^n}\mathcal{A}_3 \right).
  \end{split}
\]  
\begin{figure}[h]
  \centering
  \includegraphics[scale=0.8]{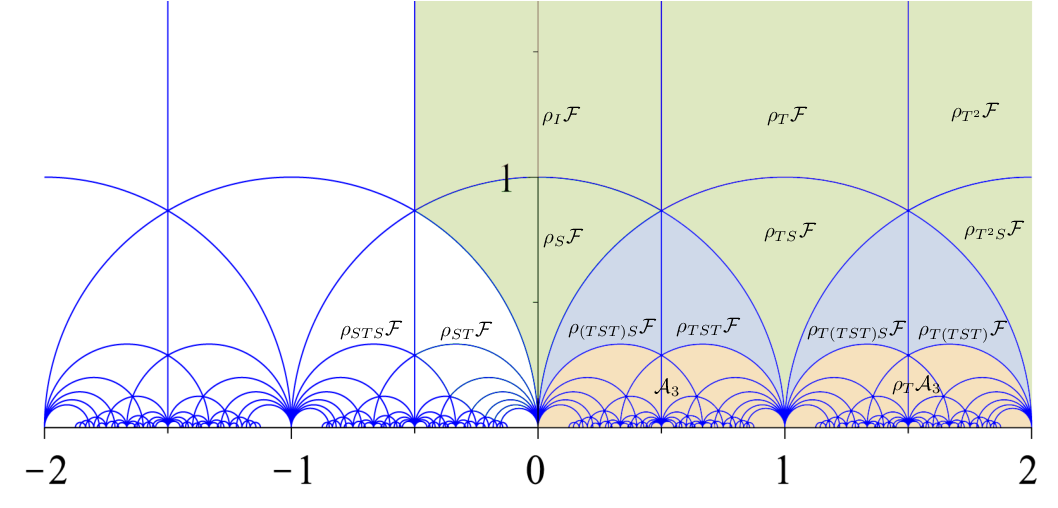}
  \caption{The decomposition of the region $\mathcal{A}$. The regions $\mathcal{A}_1$ and $\mathcal{A}_2$ are colored green and blue, respectively.}
  \label{fig: the region of A}
\end{figure}
Note that $\mathcal{A}_1 \subset \rho_{\Gamma_S} \mathcal{F}$ and $\mathcal{A}_2 \subset \rho_{\Gamma_S} \mathcal{F}$. For $z = x + i y \in \mathcal{A}_3$, a computation shows that
\[
  \begin{split}
    \Re((TST)^{-1}(x+iy))=&\frac{x-x^2-y^2}{(1-x)^2+y^2}\\
    \geq &\min\left( \frac{x-x^2+(x-\frac{1}{3})^2-\frac{1}{9}}{(1-x)^2+y^2},\frac{x-x^2+(x-\frac{2}{3})^2-\frac{1}{9}}{(1-x)^2+y^2} \right)\\
    =&\min\left( \frac{\frac{x}{3}}{(1-x)^2+y^2},\frac{\frac{1-x}{3}}{(1-x)^2+y^2} \right)>0,
  \end{split}
\]
and
\[
\Im((TST)^{-1}(z)) = \frac{y}{(1 - x)^2 + y^2} > y,
\]
since $\mathcal{A}_3 \subset \{ x + i y : (1 - x)^2 + y^2 < 1 \}$. Therefore, repeated applications of $\rho_{T^{-1}}$ and $\rho_{(TST)^{-1}}$ move any point in $\mathcal{A}_3$ upward within $\mathcal{A}$, eventually into $\mathcal{A}_1$ or $\mathcal{A}_2$. Since both are contained in $\rho_{\Gamma_S} \mathcal{F}$, we conclude that $\mathcal{A} \subset \rho_{\Gamma_S} \mathcal{F}$.\par 

To prove the reverse inclusion, suppose for contradiction that there exists $\gamma \in \Gamma_S$ such that $\rho_\gamma \mathcal{F} \not\subset \mathcal{A}$. Since $\rho_S \mathcal{A} \cup \mathcal{A} = \mathcal{H}$, it follows that
\[
\rho_S \rho_\gamma \mathcal{F} \subset \mathcal{A} \subset \rho_{\Gamma_S} \mathcal{F},
\]
which implies $S\gamma \in \Gamma_S$. Therefore, $\gamma$ would either be in $\{ \pm S, \pm I \}$ or begin with $\pm R$, contradicting the assumption that $\gamma \in \Gamma_S$ and $\rho_\gamma \mathcal{F} \not\subset \mathcal{A}$. Thus, we must have $\rho_{\Gamma_S} \mathcal{F} \subset \mathcal{A}$, and the proof is complete.
Thus, $S\gamma\in \Gamma_S$. Hence $\gamma\in \{\pm S,\pm I\}$ or $\gamma$ starts with $\pm R$, which contradicts $\gamma\in \Gamma_S$ and $\rho_\gamma\mathcal{F}\not\subset \mathcal{A}$. Therefore, we have $\rho_{\Gamma_S}\mathcal{F}\subset \mathcal{A}$ and hence $\rho_{\Gamma_S} \mathcal{F}=\mathcal{A}$. Thus the lemma was proved.
\end{proof}\par 

We now use the formula \eqref{eqn: formula of tilde m for SL2(R)} and \eqref{eqn: formula of tilde tilde m for SL2(R)} to compute $\widetilde{m}$ on the subgroup $A$. Let
\[
s_0 = \begin{pmatrix}
\sqrt{y} & \dfrac{x}{\sqrt{y}} \\
0 & \dfrac{1}{\sqrt{y}}
\end{pmatrix}, \quad
g = \begin{pmatrix}
\sqrt{g_y} & \dfrac{g_x}{\sqrt{g_y}} \\
0 & \dfrac{1}{\sqrt{g_y}}
\end{pmatrix},
\]
where $x + i y \in \mathcal{F}$ and $g_x + i g_y \in \mathcal{H}$. By Lemma~\ref{Lem: Region of rho_Gamma F}, the condition $\beta((s_0, I), g) \in \Gamma_S$ is equivalent to
\[
\rho_{s_0 g} i \in \rho_{\Gamma_S} \mathcal{F}=\mathcal{A}.
\]
A direct computation shows that
\[
\rho_{s_0 g} i = x + g_x y + i g_y y.
\]
Thus, the condition $\rho_{s_0 g} i \in \mathcal{A}$ is equivalent to
\[
x + g_x y > -\frac{1}{2}, \quad \text{and} \quad \left( x + g_x y + 1 \right)^2 + \left( g_y y \right)^2 > 1.
\]
Define the region
\[
A(\omega_1, \omega_2) = \left\{ x + i y \in \mathcal{H} : x + \omega_1 y > -\frac{1}{2} \text{ and } \left( x + \omega_1 y + 1 \right)^2 + \left( \omega_2 y \right)^2 > 1 \right\}.
\]
Then we obtain the explicit formula
\begin{equation}\label{Eqn: explicit formula of tildetilde m for Hilbert transform}
  \widetilde{\widetilde{m}}(g) = \frac{3}{\pi} \int_{A(g_x, g_y) \cap \mathcal{F}} \frac{1}{y^2} \, \dd x \, \dd y.
\end{equation}\par 

By the bi-$K$-invariance of $\widetilde{m}$ and right $K$-invariance of $\widetilde{\widetilde{m}}$, we may restrict our attention to elements $g \in SL_2(\mathbb{R})$ of the form $g = P_{AN}(ka)$, where 
\[
k = \begin{pmatrix}
\cos\theta & -\sin\theta \\
\sin\theta & \cos\theta
\end{pmatrix}
\quad \text{and} \quad
a = \begin{pmatrix}
r & 0 \\
0 & \dfrac{1}{r}
\end{pmatrix}.
\]
Then
\[
g = P_{AN}(ka) =
\begin{pmatrix}
\dfrac{r}{\sqrt{\cos^2\theta + r^4 \sin^2\theta}} &
\dfrac{(r^4 - 1) \sin\theta \cos\theta}{r \sqrt{\cos^2\theta + r^4 \sin^2\theta}} \\
0 & \dfrac{\sqrt{\cos^2\theta + r^4 \sin^2\theta}}{r}
\end{pmatrix}.
\]
Define
\[
g_x(r, \theta) = \frac{(r^4 - 1) \sin\theta \cos\theta}{\cos^2\theta + r^4 \sin^2\theta}, \quad
g_y(r, \theta) = \frac{r^2}{\cos^2\theta + r^4 \sin^2\theta}.
\]
Then $g = P_{AN}(ka)$ can be written as
\[
g = \begin{pmatrix}
\sqrt{g_y(r, \theta)} & \dfrac{g_x(r, \theta)}{\sqrt{g_y(r, \theta)}} \\
0 & \dfrac{1}{\sqrt{g_y(r, \theta)}}
\end{pmatrix}.
\]
Denote $\widetilde{\widetilde{m}}(g)$ in \eqref{Eqn: explicit formula of tildetilde m for Hilbert transform} by $\widehat{m}(g_x, g_y)$. Since $g_x(r, \theta)$ and $g_y(r, \theta)$ are periodic in $\theta$, we may write
\begin{equation}\label{Eqn: explicit formula of tilde m in a}
\widetilde{m}(a) = \frac{1}{\pi} \int_0^\pi \widehat{m}(g_x(r, \theta), g_y(r, \theta)) \, \dd\theta = \frac{1}{\pi} \int_{-\frac{\pi}{2}}^{\frac{\pi}{2}} \widehat{m}(g_x(r, \theta), g_y(r, \theta)) \, \dd\theta=\frac{3}{\pi^2} \int_{-\frac{\pi}{2}}^{\frac{\pi}{2}}\int_{A(g_x(r,\theta), g_y(r,\theta)) \cap \mathcal{F}} \frac{1}{y^2} \, \dd x \, \dd y \, \dd\theta.
\end{equation}

  \begin{remark}\label{Rmk: decay of symbol tilde m}
  Let $\|g\| = \sup_{v \in \mathbb{R}^2} \frac{\|gv\|_2}{\|v\|_2}$ denote the operator norm of $g \in SL_2(\mathbb{R})$. The function $\widetilde{m}$ defined in~\eqref{Eqn: explicit formula of tilde m in a} is bi-$K$-invariant and satisfies the following pointwise H\"ormander--Mikhlin-type estimate:
\begin{equation} \label{Ineq: HM condition for tilde m}
\|g\|^{|\gamma|} \left| \dd_g^\gamma \widetilde{m}(g) \right| \leq C, \quad \text{for all multi-indices } \gamma \text{ with } |\gamma| \leq 1,
\end{equation}
where $\dd_g^\gamma$ denotes the Lie differential operator corresponding to the multi-index $\gamma = (j_1, \dots, j_k)$ with $|\gamma| = k$, defined by
\[
\dd_g^\gamma \widetilde{m}(g) = \partial_{X_{j_1}} \partial_{X_{j_2}} \cdots \partial_{X_{j_k}} \widetilde{m}(g) = \left( \prod_{1 \leq \ell \leq k}^{\rightarrow} \partial_{X_{j_\ell}} \right) \widetilde{m}(g).
\]
The verification of this estimate is elementary but quite lengthy. We refer the reader to the Appendix for the computation details. It is also shown there that \eqref{Ineq: HM condition for tilde m} does not extend to second-order derivatives.

It is worth noting that this phenomenon reflects the highly noncommutative nature of the lattice. In the Euclidean setting, for the directional Hilbert transform $m(x) = \operatorname{sgn}(\langle x, y \rangle)$ along a direction $y \in \mathbb{R}^d$, its restriction to the lattice $\mathbb{Z}^d$ gives rise to a transferred multiplier $\widetilde{m}$, which is a continualization of $m$ on $\mathbb{R}^d$. However, it is easy to see that for $d \geq 2$, $\widetilde{m}$ fails to satisfy even first-order H\"ormander--Mikhlin estimates. In contrast, the lattice $SL_2(\mathbb{Z}) \subset SL_2(\mathbb{R})$ is nonuniform, which significantly contributes to the improved regularity of the transferred function.

\end{remark}

\printbibliography
\addresseshere

\newpage
\appendix 
\section{Regularity of Lie derivatives of Hilbert transform analogues}
The main goal of this appendix is to establish the following result.

\begin{theorem}\label{thm: decay of Hilbert transform on SL2(R)}
  Let $1 < p < \infty$ and let $m \in L_\infty(SL_2(\mathbb{Z}))$ be the symbol defined in \eqref{eqn: Hilbert transform on PSL_2(Z)}. Let $\widetilde{m} \in L_\infty(SL_2(\mathbb{R}))$ denote the transferred multiplier associated to $m$ via \eqref{eqn: formula of tilde m}. Then $\widetilde{m}$ is bi-$K$-invariant and satisfies the pointwise estimate
  \[
     \norm{g}^{\abs{\gamma}}\abs{\dd_g^\gamma \widetilde{m}(g)}\leq C,\quad \text{for all multi-indices }  \abs{\gamma}\leq 1,
  \]
  where $\norm{g} =\sup_{v\in \mathbb{R}^2}\frac{\norm{gv}_2}{\norm{v}_2}$ is the operator norm of $g \in SL_2(\mathbb{R})$.

  Moreover, $T_{\widetilde{m}}$ is a completely bounded Fourier multiplier on $L_p(\mathcal{L}(SL_2(\mathbb{R})))$.
\end{theorem}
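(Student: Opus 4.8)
The two assertions are of rather different difficulty, and I would dispatch the complete boundedness first, since it is an immediate consequence of the transference machinery already set up. \emph{Complete boundedness.} Since $m(\gamma)=\sgn(ac+bd)$ is invariant under $\gamma\mapsto-\gamma$, Proposition~\ref{Prop: widetilde m is bi-K-invariant} gives that $\widetilde{m}$ is bi-$K$-invariant, and by \cite[Proposition 5.1]{gonzalezperez2022noncommutativecotlaridentitiesgroups} the operator $T_m$ is a completely bounded Fourier multiplier on $L_p(\mathcal{L}(SL_2(\mathbb{Z})))$ for every $1<p<\infty$. The group $SL_2(\mathbb{Z})$ is a finitely generated linear group, hence residually finite and therefore hyperlinear; moreover it is a lattice in $SL_2(\mathbb{R})$, and $\widetilde{m}$ is exactly the symbol obtained by transferring $m$ along the associated (right) lattice cocycle $\beta$ of Section~\ref{Sec: Transference from SL2(Z) to SL2(R)}. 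Hence Corollary~\ref{Cor: transferred multiplier for non-amenable groups}, in its right-cocycle form (cf.\ Remark~\ref{Rmk: main Theorem for left cocycle}), applies and gives at once
\[
\norm{T_{\widetilde{m}}:L_p(\mathcal{L}(SL_2(\mathbb{R})))\to L_p(\mathcal{L}(SL_2(\mathbb{R})))}_{\mathrm{cb}}\le\norm{T_m:L_p(\mathcal{L}(SL_2(\mathbb{Z})))\to L_p(\mathcal{L}(SL_2(\mathbb{Z})))}_{\mathrm{cb}}<\infty .
\]
Thus the $L_p$-boundedness is obtained directly from transference, not from a H\"ormander--Mikhlin theorem on $SL_2(\mathbb{R})$; the regularity estimate below is recorded for its independent interest, and, as Remark~\ref{Rmk: decay of symbol tilde m} indicates, it is sharp.

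\emph{The pointwise estimate.} The case $|\gamma|=0$ is trivial, since $\widetilde{m}$ is an average of the $\{0,1\}$-valued $m$, so $\norm{\widetilde{m}}_\infty\le1$. For $|\gamma|=1$, recall $\dd_g^\gamma=\partial_{X_j}$ for a fixed basis $\{X_1,X_2,X_3\}$ of $\mathfrak{sl}_2(\mathbb{R})$. Using the Cartan decomposition $SL_2(\mathbb{R})=KAK$, bi-$K$-invariance, the identity $\norm{k_1ak_2}=\norm{a}$, and the boundedness of $\operatorname{Ad}$ on the compact group $K$, it suffices to estimate $\partial_{X_j}\widetilde{m}$ at $a=\operatorname{diag}(r,1/r)$ with $r\ge1$; the range $r<1$ reduces to $r>1$ via the Weyl reflection $\operatorname{diag}(1/r,r)=w\,\operatorname{diag}(r,1/r)\,w^{-1}$ with $w\in K$. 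Writing $\phi(r)=\widetilde{m}(\operatorname{diag}(r,1/r))$ and $H,E,F$ for the standard generators of $\mathfrak{sl}_2(\mathbb{R})$ (so $\mathfrak{k}=\mathbb{R}(E-F)$): right-$K$-invariance kills the $(E-F)$-derivative, and left-$K$-invariance together with $\operatorname{Ad}(a^{-1})(E-F)=r^{-2}E-r^2F$ forces $\partial_E\widetilde{m}(a)=\partial_F\widetilde{m}(a)=0$ for $r\ne1$; hence the differential of $\widetilde{m}$ at $a$ is carried by $\mathfrak{a}$ alone, with $\partial_H\widetilde{m}(a)=r\,\phi'(r)$. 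Since $\norm{a}=r$, the whole first-order estimate reduces to
\[
r^{2}\,\abs{\phi'(r)}\le C,\qquad r\ge1 .
\]

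To prove this, I would differentiate the explicit formula \eqref{Eqn: explicit formula of tilde m in a}: writing $\Psi(\omega_1,\omega_2)=\int_{A(\omega_1,\omega_2)\cap\mathcal{F}}y^{-2}\,\dd x\,\dd y$,
\begin{align*}
\phi'(r)&=\frac{3}{\pi^2}\int_{-\pi/2}^{\pi/2}\bigl[(\partial_{\omega_1}\Psi)\,\partial_r g_x+(\partial_{\omega_2}\Psi)\,\partial_r g_y\bigr]\,\dd\theta,\\
\partial_r g_x&=\frac{4r^3\sin\theta\cos\theta}{(\cos^2\theta+r^4\sin^2\theta)^2},\qquad \partial_r g_y=\frac{2r\cos^2\theta-2r^5\sin^2\theta}{(\cos^2\theta+r^4\sin^2\theta)^2}.
\end{align*}
The region $A(\omega_1,\omega_2)$ is cut out by the line $\{x+\omega_1y=-1/2\}$ and the conic $\{(x+\omega_1y+1)^2+\omega_2^2y^2=1\}$, so by the divergence theorem $\partial_{\omega_i}\Psi$ is a line integral of $y^{-2}$ over the corresponding arc of $\partial A(\omega_1,\omega_2)\cap\mathcal{F}$ against its normal velocity; since $\mathcal{F}\subset\{y\ge\sqrt3/2\}$ these are uniformly bounded, and, crucially, $\partial_{\omega_1}\Psi$ is supported on $|\omega_1|\lesssim1$ and $\partial_{\omega_2}\Psi$ on $|\omega_2|\lesssim1$, the thresholds being read off the geometry of $\mathcal{F}$. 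One then matches this with the behaviour of $g_x,g_y$ in $\theta$: $g_y(r,\theta)\gtrsim1$ for $|\theta|\lesssim r^{-1}$ and $g_y(r,\theta)\sim(r^2\sin^2\theta)^{-1}\lesssim1$ beyond that scale, while $|g_x(r,\theta)|\gtrsim1$ except for $|\theta|\lesssim r^{-4}$ or $|\theta|$ near $\pi/2$. Splitting $\int_{-\pi/2}^{\pi/2}$ accordingly, the $\partial_{\omega_1}\Psi$-term is $O(r^{-5})$ (it lives where $|g_x|\lesssim1$, where $\partial_r g_x$ is correspondingly small), and the dominant piece is $\int_{|\theta|\gtrsim r^{-1}}(\partial_{\omega_2}\Psi)\,\partial_r g_y\,\dd\theta$, bounded in absolute value by $\tfrac{C}{r^3}\int_{r^{-1}}^{\pi/2}\tfrac{\dd\theta}{\sin^2\theta}\le\tfrac{C}{r^3}\cdot Cr=\tfrac{C}{r^2}$. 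Hence $\abs{\phi'(r)}\le Cr^{-2}$, which is exactly the required bound.

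The main obstacle is organizational rather than conceptual: the last step requires keeping careful track of where each of $\partial_{\omega_1}\Psi$ and $\partial_{\omega_2}\Psi$ is supported as $\theta$ varies, and how the locally very large derivatives $\partial_r g_x,\partial_r g_y$ (of size up to $\sim r$ near $\theta=0$) interact with those supports; the surviving integral $\int_{r^{-1}}^{\pi/2}\dd\theta/\sin^2\theta\sim r$ is exactly of the size that makes \eqref{Ineq: HM condition for tilde m} borderline---which is precisely why it holds for $|\gamma|\le1$ but, as noted in Remark~\ref{Rmk: decay of symbol tilde m}, fails for $|\gamma|=2$.
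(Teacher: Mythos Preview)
Your treatment of complete boundedness and bi-$K$-invariance is correct and matches the paper. Your reduction of the first-order estimate to the single radial derivative is also correct and is in fact cleaner than the paper's route: the paper does not use that $\partial_{X_2}\widetilde m(a)=\partial_{X_3}\widetilde m(a)=0$ at regular $a\in A$, but instead bounds $|\partial_{X_j}\widetilde m(k'ak)|\le\sum_i|\partial_{X_i}\widetilde m(a)|$ and then estimates $\partial_{X_1}\widetilde m(a)$ and $\partial_{X_2}\widetilde m(a)$ separately by integrating $\partial_{X_j}\widetilde{\widetilde m}$ over $K_+$. Your observation eliminates the $\partial_{X_2}$ computation entirely.

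Where your argument is too quick is the claim that $\partial_{\omega_i}\Psi$ are ``uniformly bounded'' because $\mathcal F\subset\{y\ge\sqrt3/2\}$. This is false: $\mathcal F$ is unbounded \emph{above}, and the arc $\partial A(\omega_1,\omega_2)\cap\mathcal F$ can have arbitrarily large length; in particular $\partial_{\omega_1}\Psi=\tfrac{\pi}{3}\partial_{g_x}\widehat m$ blows up logarithmically as $g_x\to 0$ and as $g_y\to 0$. The paper's main technical work (Lemma~\ref{Lem: dereasing ratio of derivative of m hat}) is precisely to establish, via an eight-case analysis of how the line and ellipse cutting out $A(g_x,g_y)$ meet $\mathcal F$, the sharp bounds
\[
0<\partial_{g_x}\widehat m\ \lesssim\ \log\tfrac{1}{|g_x|}+\log\tfrac{1}{g_y}+C,\qquad 0\le\partial_{g_y}\widehat m\le 6,
\]
together with the support information you asserted. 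With these in hand your $\theta$-integration goes through (the $\partial_{\omega_1}$-term picks up a harmless $\log r$ factor, and remains $\ll r^{-2}$). Note also that $\partial_{g_y}\widehat m$ is supported where \emph{both} $g_y\lesssim1$ and $|g_x|\lesssim1$, which for large $r$ forces $|\theta|$ bounded away from $0$, not merely $|\theta|\gtrsim r^{-1}$; so your ``dominant'' integral is actually $O(r^{-3})$, consistent with the paper's remark that the first-order bound is not sharp.
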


We begin by establishing some notational conventions and preliminary computations. The proof of Theorem~\ref{thm: decay of Hilbert transform on SL2(R)} is presented in Section~\ref{Sec: Proof of the main theorem}, where it relies on Lemma~\ref{Lem: dereasing ratio of derivative of m hat}. In Remark~\ref{Rmk: non-existence of second order}, we show that the H\"ormander--Mikhlin condition fails to hold for $\widetilde{m}$ when $\abs{\gamma} = 2$.

Let
\[
X_1 = \begin{pmatrix}
1 & 0 \\
0 & -1
\end{pmatrix}, \quad 
X_2 = \begin{pmatrix}
0 & 1 \\
0 & 0
\end{pmatrix}, \quad 
X_3 = \begin{pmatrix}
0 & 1 \\
-1 & 0
\end{pmatrix}
\]
be a basis for the Lie algebra $\mathfrak{sl}_2(\mathbb{R})$. The corresponding Lie derivatives of functions on $SL_2(\mathbb{R})$ are defined by
\[
\partial_{X_j} m(g) = \eval{\dv{t}}_{t=0} m\big(g \exp(tX_j)\big),
\]
and, in general, these derivatives do not commute when $j \neq k$. Given an ordered multi-index $\gamma = (j_1, j_2, \dots, j_k)$ with $1 \leq j_i \leq 3$ and $\abs{\gamma}= k \geq 0$, the associated Lie differential operator is defined by
\[
\dd_g^\gamma m(g) = \partial_{X_{j_1}} \partial_{X_{j_2}} \cdots \partial_{X_{j_k}} m(g) = \left( \prod_{1 \leq k \leq |\gamma|}^{\rightarrow} \partial_{X_{j_k}} \right) m(g).
\]

Assume that $m \in L_\infty(SL_2(\mathbb{Z}))$ satisfies $m(\gamma) = m(-\gamma)$ for all $\gamma \in SL_2(\mathbb{Z})$. Let $SL_2(\mathbb{R}) = KAK$ denote the Cartan decomposition of the group. By Proposition~\ref{Prop: widetilde m is bi-K-invariant}, the first-order Lie derivatives of the function $\widetilde{m}$ satisfy
\[
\begin{split}
\partial_{X_j} \widetilde{m}(k'a k) &= \eval{\dv{t}}_{t=0}\widetilde{m}\big(k' a k \exp(t X_j)\big) =\eval{\dv{t}}_{t=0} \widetilde{m}\big(a k \exp(t X_j) k^{-1}\big) \\
&= \eval{\dv{t}}_{t=0} \widetilde{m}\big(a \exp(t (\ad_k X_j))\big) = \partial_{\ad_k X_j} \widetilde{m}(a),
\end{split}
\]
for all $k, k' \in K$ and $a \in A$ with $1 \leq j \leq 3$.
Let
\[
k = \begin{pmatrix}
\cos\theta & -\sin\theta \\
\sin\theta & \cos\theta
\end{pmatrix}.
\]
We compute the adjoint actions:
\begin{align*}
\ad_k X_1 &= kX_1k^{-1} = 
\begin{pmatrix}
\cos(2\theta) & \sin(2\theta) \\
\sin(2\theta) & -\cos(2\theta)
\end{pmatrix}
= \cos(2\theta) X_1 + 2\sin(2\theta) X_2 - \sin(2\theta) X_3, \\
\ad_k X_2 &= kX_2k^{-1} =
\begin{pmatrix}
-\sin\theta \cos\theta & \cos^2\theta \\
-\sin^2\theta & \sin\theta \cos\theta
\end{pmatrix}
= -\sin\theta \cos\theta X_1 + \cos(2\theta) X_2 + \sin^2\theta X_3, \\
\ad_k X_3 &= kX_3k^{-1} = 
\begin{pmatrix}
0 & 1 \\
-1 & 0
\end{pmatrix}
= X_3.
\end{align*}
Therefore, for any $a \in A$ and $k,k' \in K$, we have the following estimate:
\begin{equation}\label{Eqn: estimation of Lie derivation of m tilde}
\left| \partial_{X_j} \widetilde{m}(k' a k) \right| = \left| \partial_{\ad_k X_j} \widetilde{m}(a) \right| \leq \sum_{j=1}^3 \left| \partial_{X_j} \widetilde{m}(a) \right|.
\end{equation}

Now, let $g \in AN$ be written as
\[
g = \begin{pmatrix}
\sqrt{g_y} & \dfrac{g_x}{\sqrt{g_y}} \\
0 & \dfrac{1}{\sqrt{g_y}}
\end{pmatrix},
\]
and denote by $\widehat{m}(g_x, g_y)$ the function $\widetilde{\widetilde{m}}(g)$ defined in \eqref{eqn: formula of tilde tilde m for SL2(R)} when restricted to $AN$.
The Lie derivatives of $\widetilde{\widetilde{m}}$ are given by
\begin{equation}\label{Eqns: Lie derivations of tildetilde m}
\left\{
\begin{aligned}
   \partial_{X_1} \widetilde{\widetilde{m}}(g) &=\eval{\dv{t}}_{t=0} \widehat{m}(g_x, e^{2t}g_y) = \eval{2e^{2t}g_y \pdv{g_y} \widehat{m}(g_x, e^{2t}g_y)}_{t=0} = 2g_y \pdv{g_y} \widehat{m}(g_x, g_y), \\
  \partial_{X_2} \widetilde{\widetilde{m}}(g) &=\eval{\dv{t}}_{t=0} \widehat{m}(g_x + t g_y, g_y) = \eval{ g_y \pdv{g_x} \widehat{m}(g_x + t g_y, g_y)}_{t=0} = g_y \pdv{g_x} \widehat{m}(g_x, g_y), \\
  \partial_{X_3} \widetilde{\widetilde{m}}(g) &=\eval{\dv{t}}_{t=0} \widehat{m}(g_x, g_y) = 0.
\end{aligned}
\right.
\end{equation}
By the mean value theorem, we have
\begin{equation}\label{Eqn: formula of Lie derivation of m tilde}
\begin{split}
  \partial_{X_j} \widetilde{m}(a) &=\eval{\dv{t}}_{t=0} \frac{1}{\pi} \int_{K_+} \widetilde{\widetilde{m}}(k_0 a \exp(t X_j)) \, \dd\nu(k_0) \\
  &= \frac{1}{\pi} \lim_{\delta \to 0} \int_{K_+} \eval{\dv{t}}_{t=\delta} \widetilde{\widetilde{m}}(k_0 a \exp(t X_j)) \, \dd\nu(k_0).
\end{split}
\end{equation}
Assuming that $\left| \dv{t} \widetilde{\widetilde{m}}(k_0 a \exp(t X_j)) \right|$ is integrable over $K_+$ for $t$ in a neighborhood of $0$, the dominated convergence theorem yields
\begin{equation}\label{Eqn: final formula of Lie derivation of m tilde}
  \partial_{X_j} \widetilde{m}(a) = \frac{1}{\pi} \int_{K_+} \partial_{X_j} \widetilde{\widetilde{m}}(k_0 a) \, \dd\nu(k_0).
\end{equation}

To analyze the decay of $\partial_{X_j} \widetilde{m}$, by \eqref{Eqn: estimation of Lie derivation of m tilde}, it suffices to evaluate the integrals in \eqref{Eqn: final formula of Lie derivation of m tilde}.
The decay of $\partial_{X_j} \widetilde{m}$ on the subgroup $A$ is governed by the expression $\widehat{m}(g_x, g_y)$, derived using Equations~\eqref{Eqns: Lie derivations of tildetilde m} and~\eqref{Eqn: final formula of Lie derivation of m tilde}, where $g = P_{AN}(k a)$ for some $k \in K$ and $a \in A$. The following lemma establishes the necessary decay properties for the function $\widetilde{\widetilde{m}}(g)$ defined in~\eqref{eqn: formula of tilde tilde m for SL2(R)}. The proof of this lemma constitutes the main technical work of the next subsection.

\begin{lemma}\label{Lem: dereasing ratio of derivative of m hat}
Let
\[
g = \begin{pmatrix}
  \sqrt{g_y} & \dfrac{g_x}{\sqrt{g_y}} \\
  0 & \dfrac{1}{\sqrt{g_y}}
\end{pmatrix} \in AN,
\]
and denote $\widetilde{\widetilde{m}}(g)$ in~\eqref{Eqn: explicit formula of tildetilde m for Hilbert transform} by $\widehat{m}(g_x, g_y)$. Then the following estimates hold:
\[
0<\pdv{g_x}\widehat{m}(g_x,g_y)\leq
\begin{cases}
C_1 \log\left( \dfrac{1}{|g_x|} \right) + C_2 \log\left( \dfrac{1}{g_y} \right) + C_3, & \text{if } 0 < g_y \leq \dfrac{1}{2}, \\
C_1 \log\left( \dfrac{1}{|g_x|} \right), & \text{if } g_y > \dfrac{2}{\sqrt{3}},
\end{cases}
\]
and
\[
0 \leq \pdv{g_y}\widehat{m}(g_x,g_y) \leq
\begin{cases}
6, & \text{if } 0 < g_y \leq \dfrac{1}{2}, \\
0, & \text{if } g_y > \dfrac{2}{\sqrt{3}}.
\end{cases}
\]
\end{lemma}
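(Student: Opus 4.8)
The plan is to rewrite the region $A(g_x,g_y)$ in ``graph form'' and then differentiate a one-parameter integral directly. Writing $w=x+g_xy$, one first checks that the two inequalities defining $A(g_x,g_y)$ --- namely $w>-\tfrac12$ and $(w+1)^2+(g_yy)^2>1$ --- are jointly equivalent to the single inequality $x>c(y)-g_xy$, where
\[
c(y)=\max\bigl\{-\tfrac12,\ \sqrt{(1-(g_yy)^2)_+}-1\bigr\}\in\bigl[-\tfrac12,0\bigr];
\]
this is seen by splitting into the cases $(g_yy)^2\ge\tfrac34$ and $(g_yy)^2<\tfrac34$ and using $w+1>\tfrac12>0$ on $\{w>-\tfrac12\}$. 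By \eqref{Eqn: explicit formula of tildetilde m for Hilbert transform} this gives
\[
\widehat m(g_x,g_y)=\frac{3}{\pi}\int_{\mathcal F}\mathbbm{1}\bigl[x>c(y)-g_xy\bigr]\,\frac{\dd x\,\dd y}{y^2}.
\]
Since every point of $\mathcal F$ has $y\ge\tfrac{\sqrt3}{2}$, for $g_y\ge\tfrac{2}{\sqrt3}$ one has $(g_yy)^2\ge\tfrac34$ on $\mathcal F$, so there $c(y)\equiv-\tfrac12$ and $\widehat m$ is independent of $g_y$; this already proves $\partial_{g_y}\widehat m=0$ for $g_y>\tfrac{2}{\sqrt3}$.

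Next I would differentiate under the integral sign. Let $F_y$ denote the $x$-section of $\mathcal F$ at height $y$ (a finite union of intervals, nonempty exactly for $y\ge\tfrac{\sqrt3}{2}$, with $F_y^{\circ}\subseteq(-\tfrac12,\tfrac12)$) and set $x_*(y)=c(y)-g_xy$. For fixed $(g_x,g_y)$ with $g_x\ne0$ the point $x_*(y)$ lies on $\partial F_y$ for at most finitely many $y$, so a dominated-convergence argument justifies differentiation under the integral and yields the closed forms
\[
\partial_{g_x}\widehat m(g_x,g_y)=\frac{3}{\pi}\int_{J}\frac{\dd y}{y},\qquad \partial_{g_y}\widehat m(g_x,g_y)=\frac{3}{\pi}\int_{J'}\frac{g_y}{\sqrt{1-(g_yy)^2}}\,\dd y,
\]
where $J=\{y>0:x_*(y)\in F_y^{\circ}\}$ and $J'=\{y\in J:(g_yy)^2<\tfrac34\}$ (on $J'$ one has $\partial_{g_y}c(y)=-g_yy^2/\sqrt{1-(g_yy)^2}$, and $c$ is constant in $g_y$ off $J'$). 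Both derivatives are manifestly $\ge0$, and $\partial_{g_x}\widehat m>0$ in the stated ranges follows by exhibiting an explicit subinterval contained in $J$. Moreover, the substitution $u=g_yy$ gives $\partial_{g_y}\widehat m\le\frac{3}{\pi}\int_0^{\sqrt3/2}\frac{\dd u}{\sqrt{1-u^2}}=\frac{3}{\pi}\arcsin\tfrac{\sqrt3}{2}=1\le 6$, which together with the vanishing noted above settles the estimate for $\partial_{g_y}\widehat m$.

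For $\partial_{g_x}\widehat m$ it remains to estimate $\int_J\tfrac{\dd y}{y}$. Since $F_y^{\circ}\subseteq(-\tfrac12,\tfrac12)$ and $c(y)\in[-\tfrac12,0]$, membership $x_*(y)\in F_y^{\circ}$ forces $g_xy\in(-1,\tfrac12)$, so $J\subseteq[\tfrac{\sqrt3}{2},\,1/|g_x|]$; for $g_y>\tfrac{2}{\sqrt3}$ this gives $\int_J\tfrac{\dd y}{y}\le\log\tfrac{2}{\sqrt3\,|g_x|}$, which is the claimed bound (the additive constant being harmless for the small values of $|g_x|$ arising in \eqref{Eqn: explicit formula of tilde m in a}). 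When $0<g_y\le\tfrac12$, split $J=J_{\mathrm{line}}\sqcup J_{\mathrm{ell}}$ according to whether $g_yy\ge\tfrac{\sqrt3}{2}$ (where $c\equiv-\tfrac12$) or $g_yy<\tfrac{\sqrt3}{2}$: on $J_{\mathrm{line}}$ one has $\tfrac{\sqrt3}{2g_y}\le y\le1/|g_x|$, hence $\int_{J_{\mathrm{line}}}\tfrac{\dd y}{y}\le\log(1/|g_x|)+O(1)$; on $J_{\mathrm{ell}}$ one has $\tfrac{\sqrt3}{2}\le y<\tfrac{\sqrt3}{2g_y}$, hence $\int_{J_{\mathrm{ell}}}\tfrac{\dd y}{y}\le\log(1/g_y)$. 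Adding the two contributions gives the estimate with $C_1=C_2=\tfrac{3}{\pi}$ and an appropriate constant $C_3$.

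The crux, and the main obstacle, is the first step: recognising that $A(g_x,g_y)$ is precisely the epigraph of $x\mapsto c(y)-g_xy$. This collapses what a priori involves two distinct moving curves --- the line $x+g_xy=-\tfrac12$ and the ellipse $(x+g_xy+1)^2+(g_yy)^2=1$ --- sweeping across the fixed set $\mathcal F$ into a single monotone one-parameter family of regions, and it is this monotonicity that produces both the sign of $\partial_{g_x}\widehat m$ and its logarithmic blow-up as $g_x\to0^-$, as well as the $\log(1/g_y)$ term as $g_y\to0^+$. One could instead work via the Leibniz formula for domains with moving boundary, parametrising the elliptic arc by $x+g_xy+1=\cos t$, $g_yy=\sin t$, which leads to $\partial_{g_x}\widehat m=\frac{3}{\pi}\bigl(\int\frac{\dd y}{y}+\int\cot t\,\dd t\bigr)$; but this requires carefully tracking which subarcs of the line and ellipse actually bound $A\cap\mathcal F$ and with which orientation, which is more delicate, and there the $\log(1/g_y)$ term appears through the behaviour of $\int\cot t\,\dd t$ near $t=0$.
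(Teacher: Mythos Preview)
Your approach is correct and genuinely different from the paper's, and in fact considerably more efficient. The paper proceeds by an exhaustive geometric case analysis: for $g_y<\tfrac12$ it distinguishes seven configurations according to the position of $g_x$ relative to explicit threshold functions $b_2(g_y),\dots,b_7(g_y)$, and in each configuration it parametrises the intersection $A(g_x,g_y)\cap\mathcal F$ by a different combination of arcs (circle, ellipse, straight lines), writes $\widehat m$ as a sum of one-dimensional integrals in $x$ or in $y$, differentiates term by term via Leibniz's rule (checking each time that the boundary contributions cancel), and then bounds each resulting integrand separately. Your single observation --- that in the variable $w=x+g_xy$ the two defining inequalities of $A(g_x,g_y)$ collapse to $w>c(y)$ with $c(y)=\max\{-\tfrac12,\sqrt{(1-(g_yy)^2)_+}-1\}$ --- bypasses all of this, because it makes $A(g_x,g_y)\cap\mathcal F$ monotone in $g_x$ at every height $y$ and reduces both partial derivatives to integrals over the single set $J=\{y:x_*(y)\in F_y^\circ\}$. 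Your resulting bounds are in fact sharper than the paper's: you obtain $\partial_{g_y}\widehat m\le \tfrac{3}{\pi}\arcsin(\tfrac{\sqrt3}{2})=1$ rather than $6$, and the uniform inclusion $J\subseteq[\tfrac{\sqrt3}{2},1/|g_x|]$ already gives $\partial_{g_x}\widehat m\le\tfrac{3}{\pi}\log\tfrac{2}{\sqrt3|g_x|}$ for \emph{all} $g_y>0$, so the extra $\log(1/g_y)$ term in the lemma (and your split $J=J_{\mathrm{line}}\sqcup J_{\mathrm{ell}}$) is not actually needed. The paper's $\log(1/g_y)$ terms in Cases~5 and~6 arise only because there $|g_x|\asymp g_y$, so they are $\log(1/|g_x|)$ in disguise. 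What the paper's approach buys is explicit control of the derivative in each geometric regime separately, which could be useful if one wanted finer information; what your approach buys is a uniform closed-form expression for the derivatives and a proof that is an order of magnitude shorter.
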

\refstepcounter{subsection}
\subsection*{\thesubsection\quad Proof of Lemma \ref{Lem: dereasing ratio of derivative of m hat}}
\phantomsection
Recall the explicit formula for $\widetilde{\widetilde{m}}(g)$ is given by 
\[
 \widetilde{\widetilde{m}}(g) = \frac{3}{\pi} \int_{A(g_x, g_y) \cap \mathcal{F}} \frac{1}{y^2} \, \dd x \, \dd y,
\]
where $A(g_x,g_y)$ is the region in the upper half-plane defined by
\[
A(g_x, g_y) = \left\{ x + i y \in \mathcal{H} : x + g_x y > -\frac{1}{2} \text{ and } \left( x + g_x y + 1 \right)^2 + \left( g_y y \right)^2 > 1 \right\}
\]
and $\mathcal{F}$ is the region in the upper half-plane defined by
\[
\mathcal{F} = \left\{ x + i y \in \mathcal{H} : \abs{x} \leq \frac{1}{2},\ x^2 + y^2 \geq 1 \right\}.
\]
The region $A(g_x, g_y)$ is the intersection of the area outside the ellipse $E(x,y): (x + g_x y + 1)+(g_y y)^2=1$ and the one side of the line $L(x,y): x+g_x y=-\frac{1}{2}$. The region $A(g_x,g_y)\cap \mathcal{F}$, depicted in Figure \ref{fig: A cap F}, represents the overlapped area determined by a parameterized ellipse, a line, and the fundamental domain $\mathcal{F}$. \par 

\begin{figure}[h]
\centering
\includegraphics[scale=0.4]{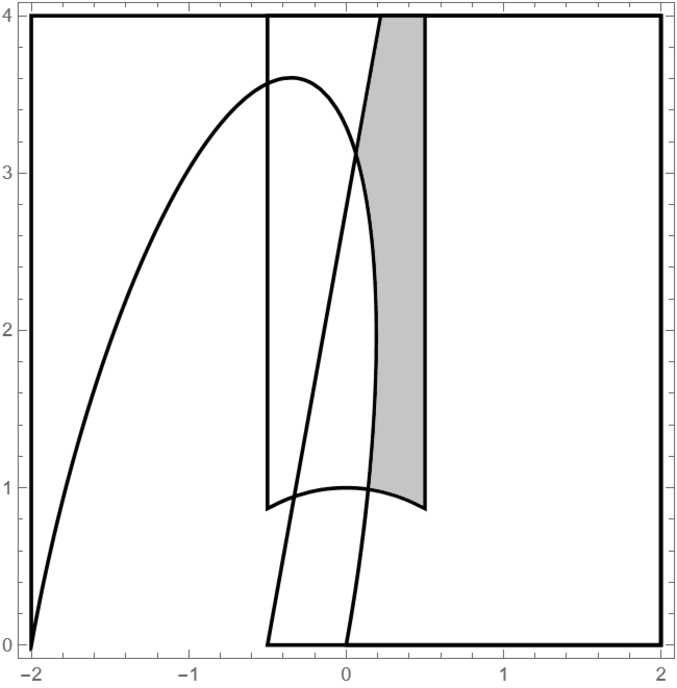}
\caption{The illustration of region $A(g_x,g_y)\cap \mathcal{F}$.}\label{fig: A cap F}
\end{figure}
The region $A(g_x,g_y)\cap \mathcal{F}$ changes as described below: For a fixed $g_y>0$, the ellipse and line undergo a clockwise rotation as $g_x$ decreases from positive to negative infinity. Figure \ref{fig: change of A cap F by g_x for small gy} illustrates these changes for $g_y<\frac{1}{2}$. These figures are also the boundary conditions for $g_x$.
\begin{figure}[h]
\centering
\includegraphics[scale=0.4]{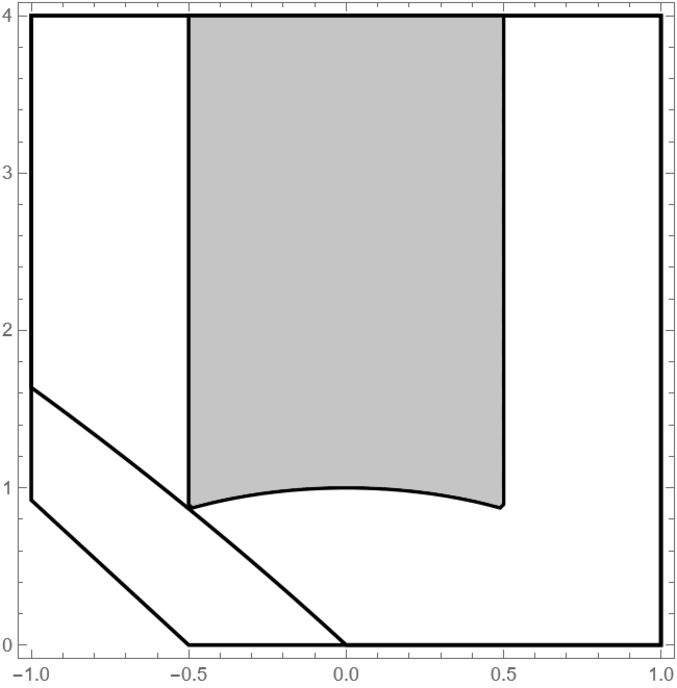}
\includegraphics[scale=0.4]{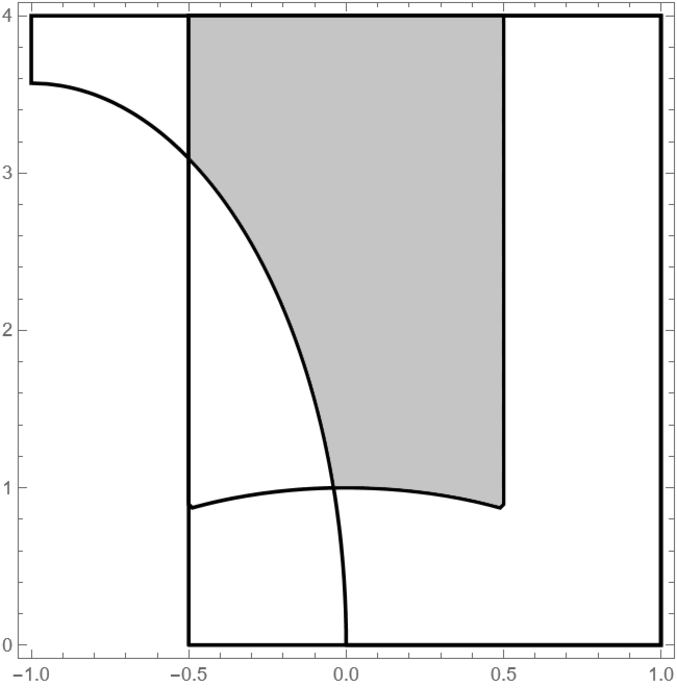}
\includegraphics[scale=0.4]{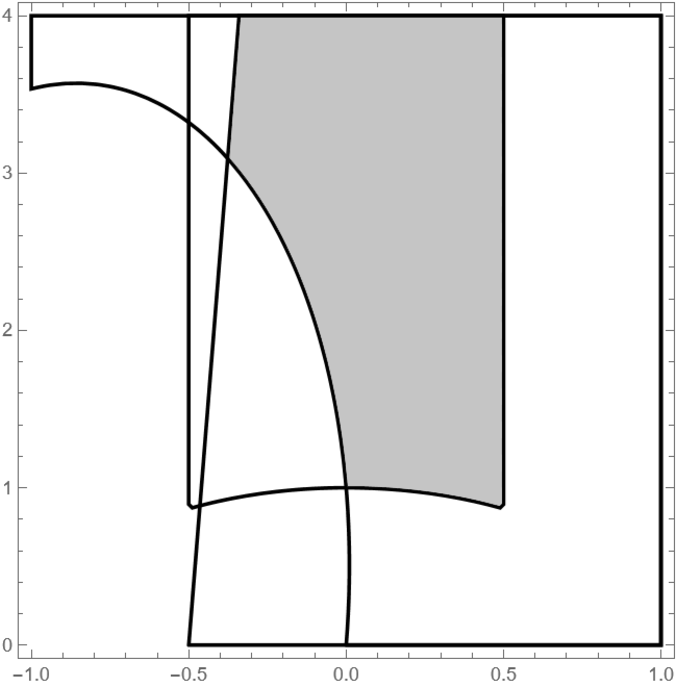}
\includegraphics[scale=0.4]{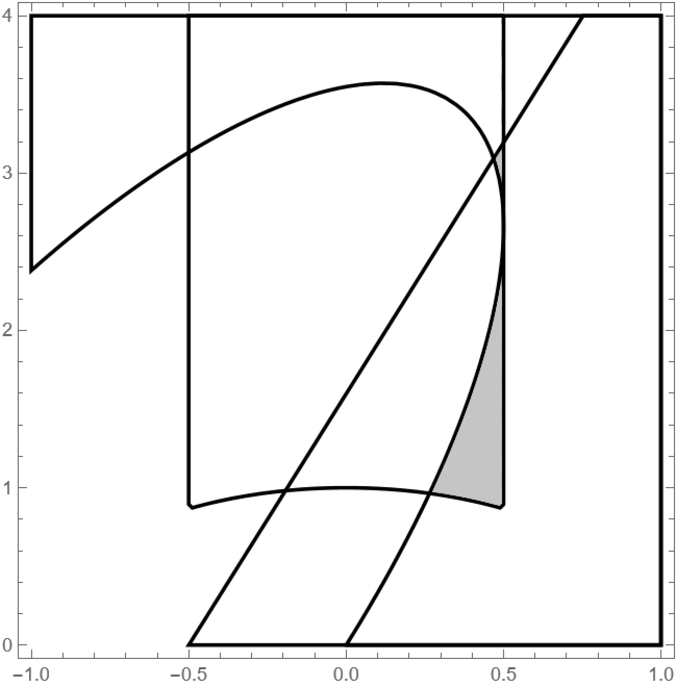}
\includegraphics[scale=0.4]{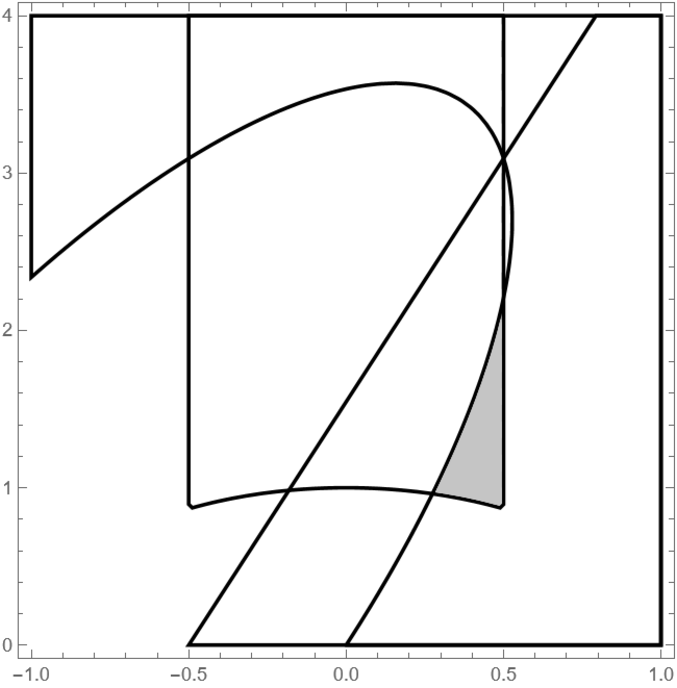}
\includegraphics[scale=0.4]{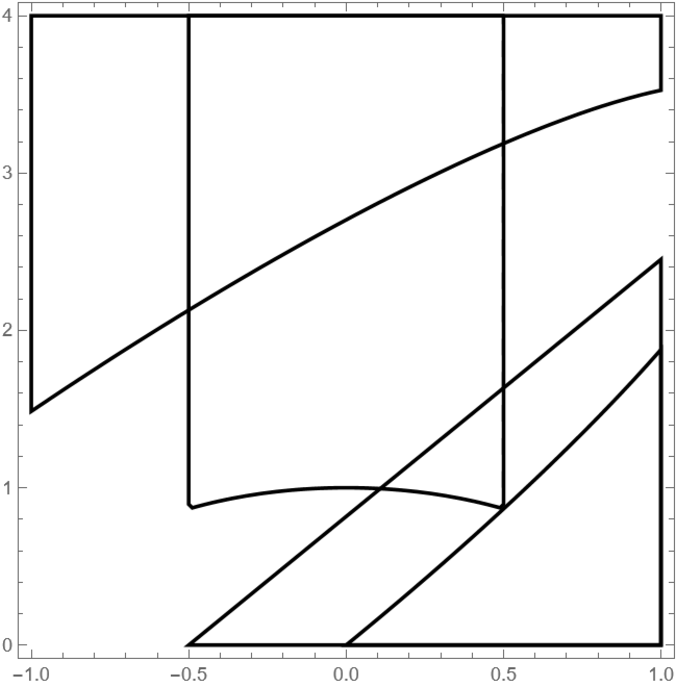}
\caption{The illustration of region $A(g_x,g_y)\cap \mathcal{F}$ as $g_x$ decreases for $g_y<\frac{1}{2}$.}\label{fig: change of A cap F by g_x for small gy}
\end{figure}\par
For $g_y>\frac{2}{\sqrt{3}}$, the ellipse $E$ does not intersect with the region $\mathcal{F}$. Figure \ref{fig: change of A cap F by g_x for large gy} illustrates these changes for $g_y>\frac{2}{\sqrt{3}}$.
\begin{figure}[h]
\centering
\includegraphics[scale=0.4]{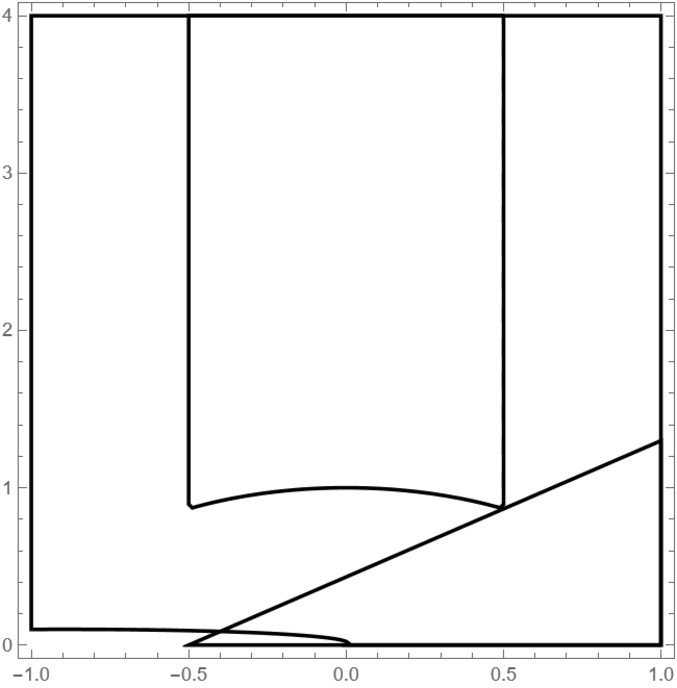}
\includegraphics[scale=0.4]{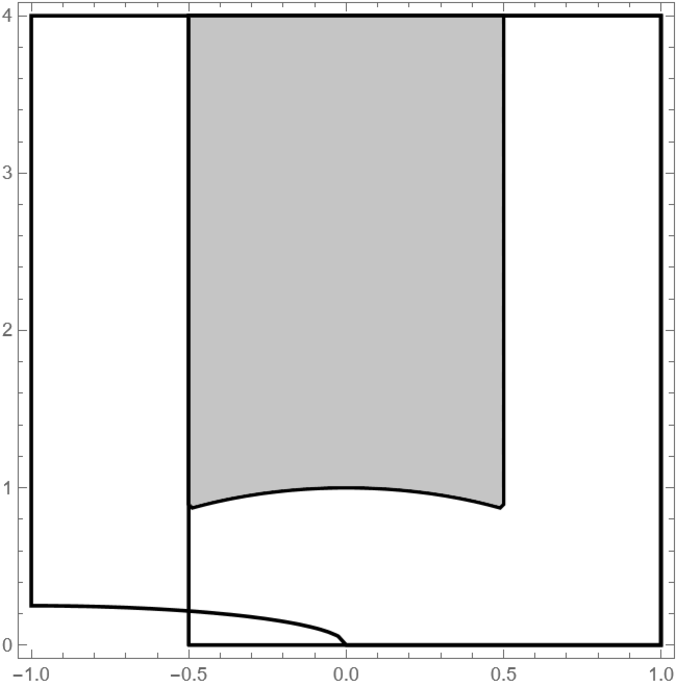}
\caption{The illustration of region $A(g_x,g_y)\cap \mathcal{F}$ as $g_x$ increases with $g_y>\frac{2}{\sqrt{3}}$.}\label{fig: change of A cap F by g_x for large gy}
\end{figure}
\par
For fixed $g_x$, $g_y$, we denote the function of the upper (lower) part of the ellipse by
\begin{align*}
E_{y,upper}(x,g_x,g_y)&=\frac{\sqrt{g_x^2 - x (x + 2) g_y^2} - (x + 1) g_x}{g_x^2 + g_y^2},\\
 E_{y,lower}(x,g_x,g_y)&=-\frac{\sqrt{g_x^2 - x (x + 2) g_y^2} + (x + 1) g_x}{g_x^2 + g_y^2}.
\end{align*}
Also, we denote the function of the right (left) part of the ellipse by
\begin{align*}
E_{x,right}(y, g_x, g_y) &= -y g_x + \sqrt{1 - y^2 g_y^2} - 1, \\
E_{x,left}(y, g_x, g_y) &= -y g_x - \sqrt{1 - y^2 g_y^2} - 1. \\
\end{align*}
Finally, for easy to compute, denote  
\begin{align*}
L_y(x, g_x, g_y) &= -\frac{1 + 2 x}{2 g_x},\\
L_x(y, g_x, g_y) &= -y g_x - \frac{1}{2}.
\end{align*}
\par 

These functions help delineate the integration domain for $\widehat{m}(g_x,g_y)$ when calculating the integral over $A(g_x, g_y) \cap \mathcal{F}$. Solving these equations provides the necessary boundaries to evaluate the integral explicitly. We list the values of $g_x$ at the boundary conditions for $g_y<\frac{1}{2}$ in Figure \ref{fig: change of A cap F by g_x for small gy}:
\begin{enumerate}
\item $g_x=b_2(g_y)=\frac{-1+\sqrt{4-3g_y^2}}{\sqrt{3}}$: The unique $g_x$ such that the point $(-\frac{1}{2},\frac{\sqrt{3}}{2})$ lies on the right part of the ellipse $E$.
\item $g_x=b_3(g_y)=0$: The unique $g_x$ such that the line $L$ is perpendicular to the $x$-axis.
\item $g_x=b_4(g_y)=-1+\sqrt{1-g_y^2}$: The unique $g_x$ such that the point $(0,1)$ lies on the right part of the ellipse $E$.
\item $g_x=b_5(g_y)=-\frac{\sqrt{5}g_y}{2}$: The unique $g_x$ such that the right part of the ellipse $E$ tangent to the straight line $x=\frac{1}{2}$.
\item $g_x=b_6(g_y)=-\frac{2g_y}{\sqrt{3}}$: The unique $g_x$ such that the line $L$, the right part of the ellipse $E$ and the straight line $x=\frac{1}{2}$ meet at one point. 
\item $g_x=b_7(g_y)=-\frac{3-\sqrt{4-3g_y^2}}{\sqrt{3}}$: The unique $g_x$ such that the point $(\frac{1}{2},\frac{\sqrt{3}}{2})$ lies on the right part of the ellipse $E$.
\end{enumerate}
And we list the explicit values of $g_x$ at these boundary conditions for $g_y>\frac{2}{\sqrt{3}}$ in Figure \ref{fig: change of A cap F by g_x for large gy}.
\begin{enumerate}
\item $g_x=b_8(g_y)=-\frac{2}{\sqrt{3}}$: The unique $g_x$ such that the point $(\frac{1}{2},\frac{\sqrt{3}}{2})$ lies on the line $L$.
\item $g_x=b_9(g_y)=0$: The unique $g_x$ such that the line $L$ is perpendicular to the $x$-axis.
\end{enumerate}
We compute $\widehat{m}(g_x,g_y)$ for $g_y<\frac{1}{2}$ in Case 1 to Case 7, $\widehat{m}(g_x,g_y)$ for $g_y>\frac{2}{\sqrt{3}}$ in Case 8.\par 
\noindent\textbf{Case 1: $g_x \in (b_2(g_y), \infty)$}

In this case, the overlap region is depicted in Figure \ref{fig:case1}.

\begin{figure}[h]
\centering
\includegraphics[scale=0.3]{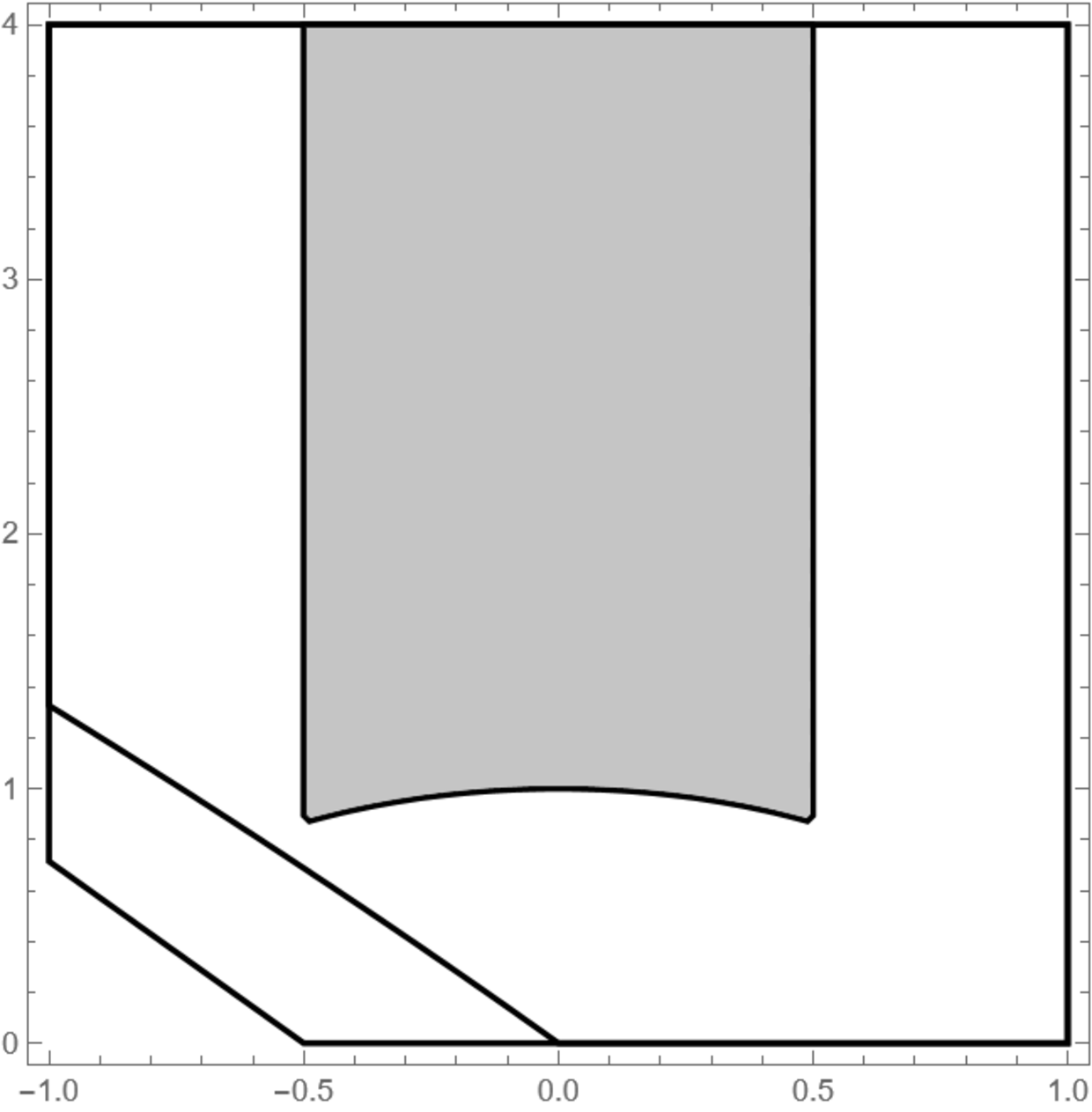}
\caption{Case 1}\label{fig:case1}
\end{figure}

We have $\widehat{m}(g_x, g_y) = 1$ is a constant function. So $\pdv{g_y}\widehat{m}(g_x, g_y) = \pdv{g_x}\widehat{m}(g_x, g_y)=0$.\par

\noindent\textbf{Case 2: $g_x \in (b_3(g_y), b_2(g_y))$}

In this case, the overlap region is depicted in Figure \ref{fig:case2}.
\begin{figure}[h]
\centering
\includegraphics[scale=0.3]{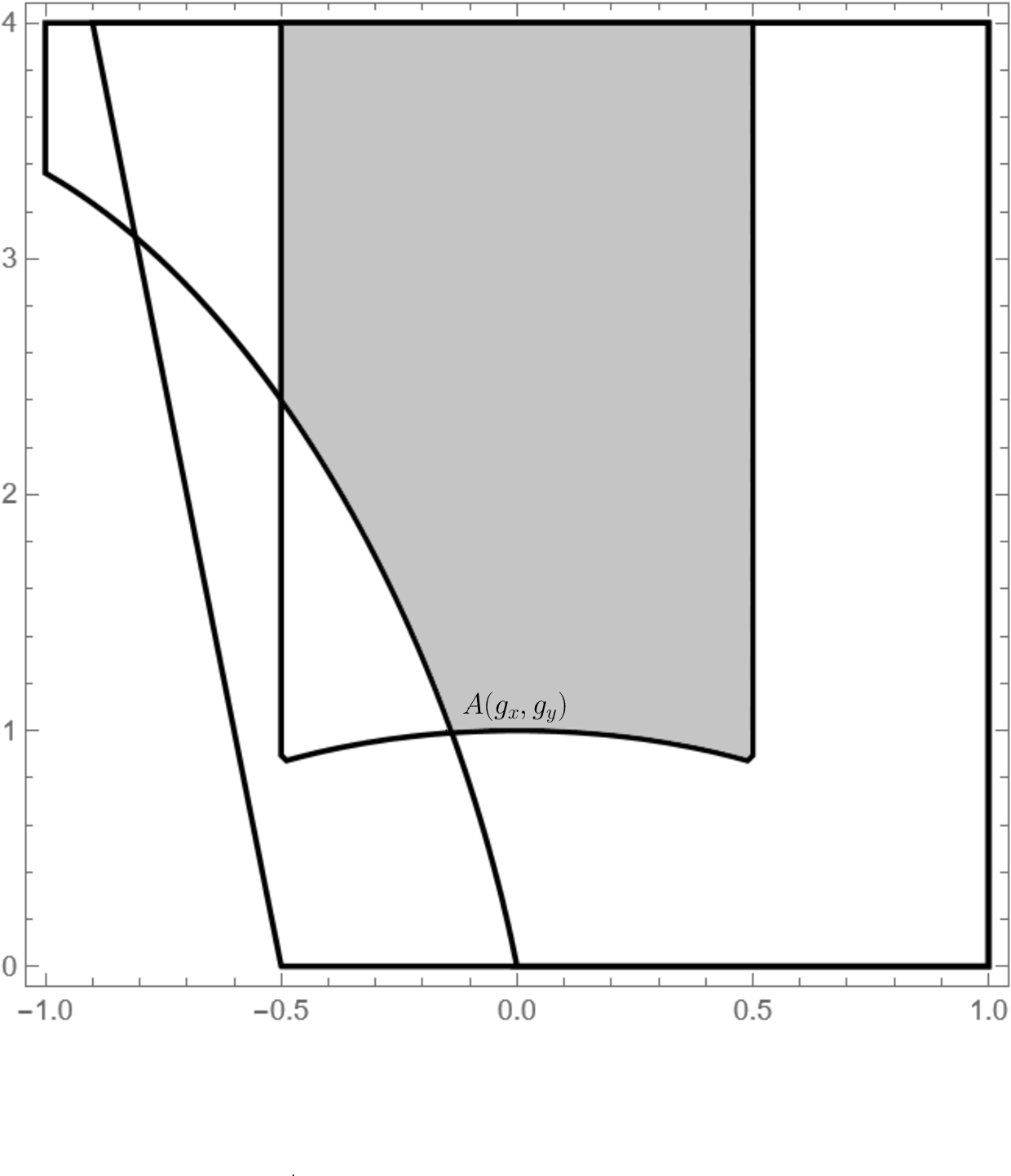}
\caption{Case 2}\label{fig:case2}
\end{figure}

Denote the $x$-coordinate of the intersection point of the ellipse $E$ and the circle $x^2+y^2=1$ by  $A_x(g_x, g_y)$. Since \(E_{y,upper}(x, g_x, g_y)\) is concave with respect to $x$, we have
\[
E_{y,upper}(x, g_x, g_y) \leq \left( \eval{\pdv{x} E_{y,upper}(x, g_x, g_y)}_{x=0} \right) x = -\frac{x}{g_x}.
\]
So the solution of the equation \(-\frac{x}{g_x} = \sqrt{1 - x^2}\) gives an upper bound of \(A_x(g_x, g_y)\),
\[
A_x(g_x, g_y) \leq -\frac{g_x}{\sqrt{1 + g_x^2}} \leq -\frac{g_x}{\sqrt{1 + \frac{1}{3}}} = -\frac{\sqrt{3}g_x}{2}.
\]
The second inequality is due to $g_x^2 \leq \frac{1}{3}$ for $g_x\in (0,b_2(g_y))$.\par
For $g_x\in (b_3(g_y),b_2(g_y))$, we have
\[
\begin{split}
\widehat{m}(g_x, g_y) &= 1 - \frac{3}{\pi} \int_{-\frac{1}{2}}^{A_x(g_x, g_y)} \int_{\sqrt{1 - x^2}}^{E_{y,upper}(x, g_x, g_y)} \frac{1}{y^2} \dd y \dd x \\
&= 1 + \frac{3}{\pi} \int_{-\frac{1}{2}}^{A_x(g_x, g_y)}  \frac{1}{E_{y,upper}(x, g_x, g_y)} - \frac{1}{\sqrt{1 - x^2}} \dd x \\
&= 1 + \frac{3}{\pi} \int_{-\frac{1}{2}}^{A_x(g_x, g_y)} F_2(x, g_x, g_y) \dd x.
\end{split}
\]

It is easy to compute that $\int \pdv{g_x}F_2(x,g_x,g_y)\dd x=\pdv{g_x}\int F_2(x,g_x,g_y)\dd x+C$ and $\int \pdv{g_y}F_{2}(x,g_x,g_y)\dd x=\pdv{g_y}\int F_2(x,g_x,g_y)\dd x+C$. Since $F_2(A_x(g_x,g_y),g_x,g_y)=0$, we have 
\[
\pdv{g_x}\widehat{m}(g_x, g_y)=\frac{3}{\pi}\int_{-\frac{1}{2}}^{A_x(g_x,g_y)}\pdv{g_x}F_2(x,g_x,g_y)\dd x,
\]
\[
\pdv{g_y}\widehat{m}(g_x, g_y)=\frac{3}{\pi}\int_{-\frac{1}{2}}^{A_x(g_x,g_y)}\pdv{g_y}F_{2}(x,g_x,g_y)\dd x.
\]\par

In the region of integration in this case, we have \(x \in (-\frac{1}{2}, 0 )\) and \(g_x > 0\). Therefore, \(\pdv{g_x}F_2(x, g_x, g_y)\) can be estimated by
\[
0 < \pdv{g_x}F_2(x, g_x, g_y) = -\frac{ \frac{g_x}{\sqrt{g_x^2 - g_y^2 x (x + 2)}} + x + 1 }{ x (x + 2) } \leq -\frac{ 1 + x + 1 }{ x (x + 2) } = -\frac{1}{x}.
\]
So we have 
\begin{equation}\label{ineq: estimation of m hat 2 by gx}
0<\pdv{g_x}\widehat{m}(g_x, g_y)\leq \frac{3}{\pi}\int_{-\frac{1}{2}}^{ \frac{-\sqrt{3}g_x}{2} } -\frac{1}{x}\dd x= -\frac{3\log(3)}{2\pi}+\frac{3}{\pi}\log(\frac{1}{g_x}).
\end{equation}
Now we estimate $\pdv{g_y}\widehat{m}(g_x, g_y)$. This estimation will occur many times in subsequent cases. The main estimation we use is that 
\begin{equation}\label{ineq: estimation of m hat 2 by gy}
\begin{split}
0<\pdv{g_y}\widehat{m}(g_x, g_y)=&\frac{3}{\pi}\int_{x_l}^{x_u}\pdv{g_y}F_{2}(x, g_x, g_y)\dd x = \frac{3}{\pi}\int_{x_l}^{x_u}\frac{1}{\sqrt{(\frac{g_x}{g_y} )^2 - x(x+2)}}\dd x \\
= &\frac{3}{\pi}\left( \eval{ -\tan^{-1}\left( \frac{\sqrt{(\frac{g_x}{g_y})^2 - 2x - x^2}}{1+x} \right) }_{x_l}^{x_u} \right) \leq 3,
\end{split}
\end{equation}
for any $x_l, x_u$ in the domain of the integrand of the above integration.

\noindent\textbf{Case 3: $g_x\in (b_4(g_y),b_3(g_y))$}\par

In this case, the overlap region is depicted in Figure \ref{fig:case3}.
\begin{figure}[h]
\includegraphics[scale=0.3]{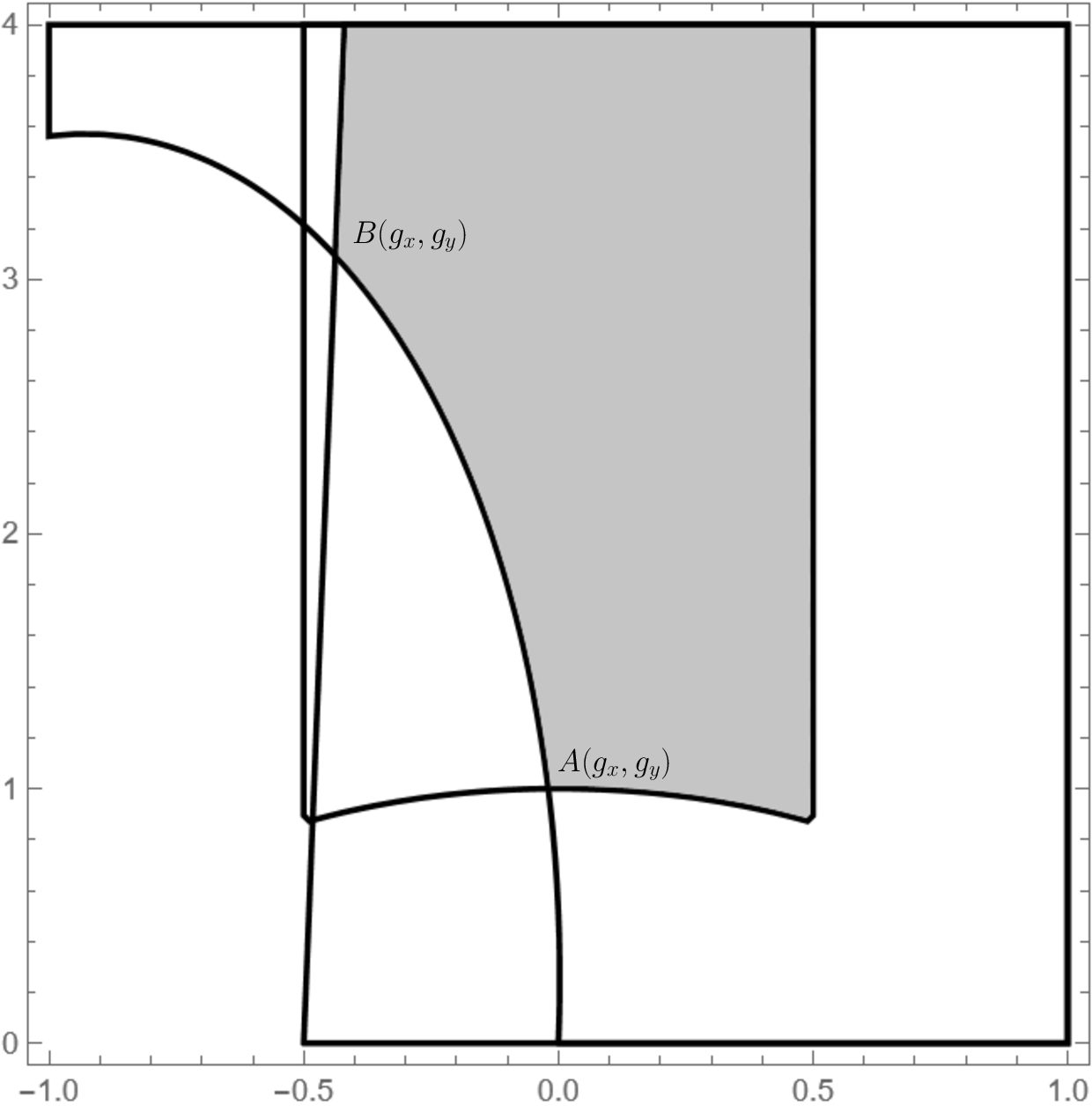}
\centering
\caption{Case 3}\label{fig:case3}
\end{figure}\par 
Let $B_x(g_x,g_y)$ be the $x$-coordinate of the intersection point of the ellipse $E$ and the line $L$, i.e.,
\[
L_y(B_x(g_x,g_y),g_x,g_y)=E_{y,upper}(B_x(g_x,g_y),g_x,g_y).
\]
This yields
\[
B_x(g_x,g_y)=-\frac{\sqrt{3} g_x+g_y}{2 g_y}>-\frac{1}{2}.
\]
For $g_x\in (b_4(g_y),b_3(g_y))$, we have
\[
\begin{split}
\widehat{m}(g_x,g_y)=&\frac{3}{\pi}\int_{B_x(g_x,g_y)}^{\frac{1}{2}}\int_{\sqrt{1-x^2}}^{L_y(x,g_x,g_y)}\frac{1}{y^2}\dd y\dd x-\frac{3}{\pi}\int_{B_x(g_x,g_y)}^{A_x(g_x,g_y)}\int_{\sqrt{1-x^2}}^{E_{y,upper}(x,g_x,g_y)}\frac{1}{y^2}\dd y\dd x\\
=&-\frac{3}{\pi}\int_{B_x(g_x,g_y)}^{\frac{1}{2}}\frac{1}{L_y(x,g_x,g_y)}-\frac{1}{\sqrt{1-x^2}}\dd x+\frac{3}{\pi}\int_{B_x(g_x,g_y)}^{A_x(g_x,g_y)}\frac{1}{E_{y,upper}(x,g_x,g_y)}-\frac{1}{\sqrt{1-x^2}}\dd x\\
=&\frac{3}{\pi}\int_{B_x(g_x,g_y)}^{\frac{1}{2}}F_{3,1}(x,g_x,g_y)\dd x+\frac{3}{\pi}\int_{B_x(g_x,g_y)}^{A_x(g_x,g_y)}F_{2}(x,g_x,g_y)\dd x\\
=& \widehat{m}_{3,1}(g_x,g_y)+\widehat{m}_{3,2}(g_x,g_y).
\end{split}
\]\par
It is easy to compute that $\int \pdv{g_x}F_{3,1}(x,g_x,g_y)\dd x=\pdv{g_x}\int F_{3,1}(x,g_x,g_y)\dd x+C$ and $\int \pdv{g_y}F_{3,1}(x,g_x,g_y)\dd x=\pdv{g_y}\int F_{3,1}(x,g_x,g_y)\dd x+C=C$. Since 
\[
F_{3,1}(B_x(g_x,g_y),g_x,g_y)+F_{2}(B_x(g_x,g_y),g_x,g_y)=0,
\]
we have
\[
\begin{split}
\pdv{g_x}\widehat{m}(g_x, g_y)=&\frac{3}{\pi}\int_{B_x(g_x,g_y)}^{\frac{1}{2}}\pdv{g_x}F_{3,1}(x,g_x,g_y)\dd x+\frac{3}{\pi}\int_{B_x(g_x,g_y)}^{A_x(g_x,g_y)}\pdv{g_x}F_{2}(x,g_x,g_y)\dd x\\
=&\frac{3}{\pi}\int_{B_x(g_x,g_y)}^{\frac{1}{2}}\frac{2}{1+2x}\dd x+\frac{3}{\pi}\int_{B_x(g_x,g_y)}^{A_x(g_x,g_y)}\pdv{g_x}F_{2}(x,g_x,g_y)\dd x,\\
\end{split}
\]
\[
\pdv{g_y}\widehat{m}(g_x, g_y)=\frac{3}{\pi}\int_{B_x(g_x,g_y)}^{A_x(g_x,g_y)}\pdv{g_y}F_{2}(x,g_x,g_y)\dd x.\\
\]\par
We have
\begin{equation}
0<\int_{B_x(g_x,g_y)}^{\frac{1}{2}}\frac{2}{1+2x}\dd x\leq\log(-\frac{2g_y}{\sqrt{3}g_x})\leq \log(-\frac{1}{\sqrt{3}g_x}).
\end{equation}

Let $b(g_x,g_y)=\frac{g_y^2}{g_x^2}$. We have
\[
b(g_x,g_y)\geq \frac{g_y^2}{b_4(g_y)^2}= \frac{g_y^2}{(1-\sqrt{1-g_y^2})^2}\geq 7+4\sqrt{3}.
\]
So 
\[
\pdv{g_x}F_{2}(x,g_x,g_y)=\frac{-\frac{1}{\sqrt{1-(\frac{g_y}{g_x})^2 x (x+2)}}+x+1}{-x(x+2)}=\frac{-\frac{1}{\sqrt{1-b(g_x,g_y) x (x+2)}}+x+1}{-x(x+2)}>0.
\]

We have
\[
\begin{split}
0<\int_{B_x(g_x,g_y)}^{A_x(g_x,g_y)}\pdv{g_x}F_{2}(x,g_x,g_y)\dd x\leq &\int_{-\frac{1}{2}}^{0}\frac{-\frac{1}{\sqrt{1-b(g_x,g_y) x (x+2)}}+x+1}{-x (x+2)}\dd x\\
=&(\log(\frac{1+b(g_x,g_y)}{2})-\log(\frac{2}{3}(\sqrt{4+3b(g_x,g_y)}-1)))\\
\leq &\log(b(g_x,g_y))= 2\log(-\frac{g_y}{g_x})\leq 2\log(-\frac{1}{2g_x}).
\end{split}
\]
Therefore, we have 
\begin{equation}\label{ineq: estimation of m hat 3 by gx}
0<\pdv{g_x}\widehat{m}(g_x, g_y)\leq \frac{9}{\pi}\log \left(-\frac{1}{g_x}\right)-\frac{3\log (3)}{2\pi}-\frac{3}{\pi}\log (4).
\end{equation}
Also, by estimation \eqref{ineq: estimation of m hat 2 by gy}, we have \par 
\begin{equation}\label{ineq: estimation of m hat 3 by gy}
0<\pdv{g_y}\widehat{m}(g_x, g_y)=\frac{3}{\pi}\int_{B_x(g_x,g_y)}^{A_x(g_x,g_y)}\pdv{g_y}F_{2}(x,g_x,g_y)\dd x\leq 3.
\end{equation}

\noindent\textbf{Case 4: $g_x\in (b_5(g_y),b_4(g_y))$}\par
In this case, the overlap region is depicted in Figure \ref{fig:case4}.\par
\begin{figure}[h]
\includegraphics[scale=0.3]{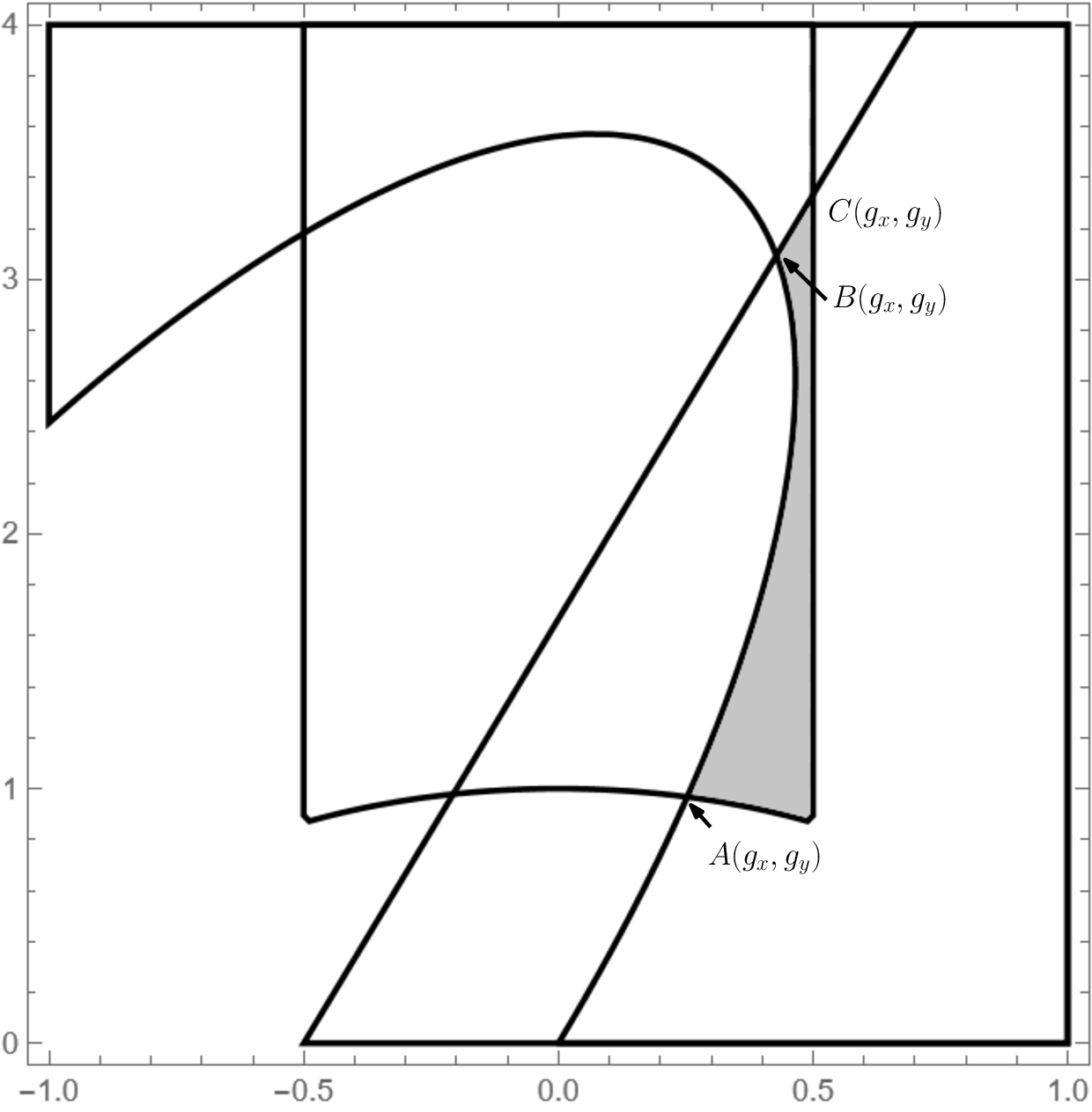}
\centering
\caption{Case 4}\label{fig:case4}
\end{figure}
Let $A_y(g_x, g_y)$ be the $y$-coordinate of the intersection point of the ellipse $E$ and the circle $x^2+y^2=1$, $B_y(g_x,g_y)$ be the $y$-coordinate of the intersection point of the ellipse $E$ and the line $L$, i.e.,
\[
L_x(B_y(g_x,g_y),g_x,g_y)=E_{x,right}(B_y(g_x,g_y),g_x,g_y),
\]
and $C_y(g_x,g_y)$ be the $y$-coordinate of the intersection point of the line $L$ and line $x=\frac{1}{2}$, i.e.,
\[
L_x(C_y(g_x,g_y),g_x,g_y)=\frac{1}{2}.
\]
These yield 
\[
A_y(g_x,g_y)\geq \frac{\sqrt{3}}{2},\quad B_y(g_x,g_y)=\frac{\sqrt{3}}{2g_y}>\frac{\sqrt{3}}{2},\quad C_y(g_x,g_y)=-\frac{1}{g_x}>\frac{\sqrt{3}}{2}.
\]
For $g_x\in (b_5(g_y),b_4(g_y))$, 
\[
\begin{split}
\widehat{m}(g_x,g_y)=&\frac{3}{\pi}\int_{\frac{\sqrt{3}}{2}}^{A_y(g_x,g_y)}\int_{\sqrt{1-y^2}}^{\frac{1}{2}}\frac{1}{y^2}\dd x\dd y+\frac{3}{\pi}\int_{A_y(g_x,g_y)}^{B_y(g_x,g_y)}\int_{E_{x,right}(y,g_x,g_y)}^{\frac{1}{2}}\frac{1}{y^2}\dd x\dd y\\
&~~+\frac{3}{\pi}\int_{B_y(g_x,g_y)}^{C_y(g_x,g_y)}\int_{L_x(y,g_x,g_y)}^{\frac{1}{2}}\frac{1}{y^2}\dd x\dd y\\
=&\frac{3}{\pi}\int_{\frac{\sqrt{3}}{2}}^{A_y(g_x,g_y)}\frac{\frac{1}{2}-\sqrt{1-y^2}}{y^2}\dd y+\frac{3}{\pi}\int_{A_y(g_x,g_y)}^{B_y(g_x,g_y)}\frac{\frac{1}{2}-E_{x,right}(y,g_x,g_y)}{y^2}\dd y\\
&~~+\frac{3}{\pi}\int_{B_y(g_x,g_y)}^{C_y(g_x,g_y)}\frac{\frac{1}{2}-L_x(y,g_x,g_y)}{y^2}\dd y\\
=&\frac{3}{\pi}\int_{\frac{\sqrt{3}}{2}}^{A_y(g_x,g_y)}F_{4,1}(y,g_x,g_y)\dd y+\frac{3}{\pi}\int_{A_y(g_x,g_y)}^{B_y(g_x,g_y)}F_{4,2}(y,g_x,g_y)\dd y\\
&~~+\frac{3}{\pi}\int_{B_y(g_x,g_y)}^{C_y(g_x,g_y)}F_{4,3}(y,g_x,g_y)\dd y.\\
\end{split}
\]

It is easy to compute that
\[
\begin{cases}
\int \pdv{g_x}F_{4,1}(x,g_x,g_y)\dd x=\pdv{g_x}\int F_{4,1}(x,g_x,g_y)\dd x+C=C,\\
\int \pdv{g_y}F_{4,1}(x,g_x,g_y)\dd x=\pdv{g_y}\int F_{4,1}(x,g_x,g_y)\dd x+C=C,\\
\int \pdv{g_x}F_{4,2}(x,g_x,g_y)\dd x=\pdv{g_x}\int F_{4,2}(x,g_x,g_y)\dd x+C=\log(x)+C,\\
\int \pdv{g_y}F_{4,2}(x,g_x,g_y)\dd x=\pdv{g_y}\int F_{4,2}(x,g_x,g_y)\dd x+C=\tan ^{-1}\left(\frac{g_y x}{\sqrt{1-g_y^2 x^2}}\right)+C,\\
\int \pdv{g_x}F_{4,3}(x,g_x,g_y)\dd x=\pdv{g_x}\int F_{4,3}(x,g_x,g_y)\dd x+C=\log(x)+C,\\
\int \pdv{g_y}F_{4,3}(x,g_x,g_y)\dd x=\pdv{g_y}\int F_{4,3}(x,g_x,g_y)\dd x+C=C.
\end{cases}
\]
Since
\[
\begin{cases}
F_{4,1}(A_y(g_x,g_y),g_x,g_y)=F_{4,2}(A_y(g_x,g_y),g_x,g_y),\\
F_{4,2}(B_y(g_x,g_y),g_x,g_y)=F_{4,3}(B_y(g_x,g_y),g_x,g_y),\\
F_{4,3}(C_y(g_x,g_y),g_x,g_y)=0,
\end{cases}
\]
we have
\[
\begin{split}
\pdv{g_x}\widehat{m}(g_x, g_y)=&\frac{3}{\pi}\int_{A_y(g_x,g_y)}^{B_y(g_x,g_y)}\pdv{g_x}F_{4,2}(y,g_x,g_y)\dd y+\frac{3}{\pi}\int_{B_y(g_x,g_y)}^{C_y(g_x,g_y)}\pdv{g_x}F_{4,3}(y,g_x,g_y)\dd y\\
=&\frac{3}{\pi}\int_{A_y(g_x,g_y)}^{C_y(g_x,g_y)}\frac{1}{y}\dd y,
\end{split}
\]
\[
\pdv{g_y}\widehat{m}(g_x, g_y)=\frac{3}{\pi}\int_{A_y(g_x,g_y)}^{B_y(g_x,g_y)}\pdv{g_y}F_{4,2}(y,g_x,g_y)\dd y.
\]
\par
We have 
\begin{equation}\label{ineq: estimation of m hat 4 by gx}
0<\pdv{g_x}\widehat{m}(g_x, g_y)= \frac{3}{\pi}\int_{A_y(g_x,g_y)}^{C_y(g_x,g_y)}\frac{1}{y}\dd y    \leq\frac{3}{\pi}\int_{\frac{\sqrt{3}}{2}}^{C_y(g_x,g_y)}\frac{1}{y}\dd y=\frac{3}{\pi}\log(-\frac{2}{\sqrt{3}g_x}).
\end{equation}
And similar to the estimation in inequality \eqref{ineq: estimation of m hat 2 by gy}, we have
\begin{equation}\label{ineq: estimation of m hat 4 by gy}
0<\pdv{g_y}\widehat{m}(g_x, g_y)=\frac{3}{\pi}\int_{A_y(g_x,g_y)}^{B_y(g_x,g_y)}\pdv{g_y}F_{4,2}(x,g_x,g_y)\dd y=\frac{3}{\pi}\eval{\tan ^{-1}\left(\frac{g_y x}{\sqrt{1-g_y^2 x^2}}\right)}_{A_y(g_x,g_y)}^{B_y(g_x,g_y)}\leq 3.
\end{equation}

\noindent\textbf{Case 5: $g_x\in (b_6(g_y),b_5(g_y))$}\par
In this case, the overlap region is depicted in Figure \ref{fig:case5}.
\begin{figure}[h]
\includegraphics[scale=0.3]{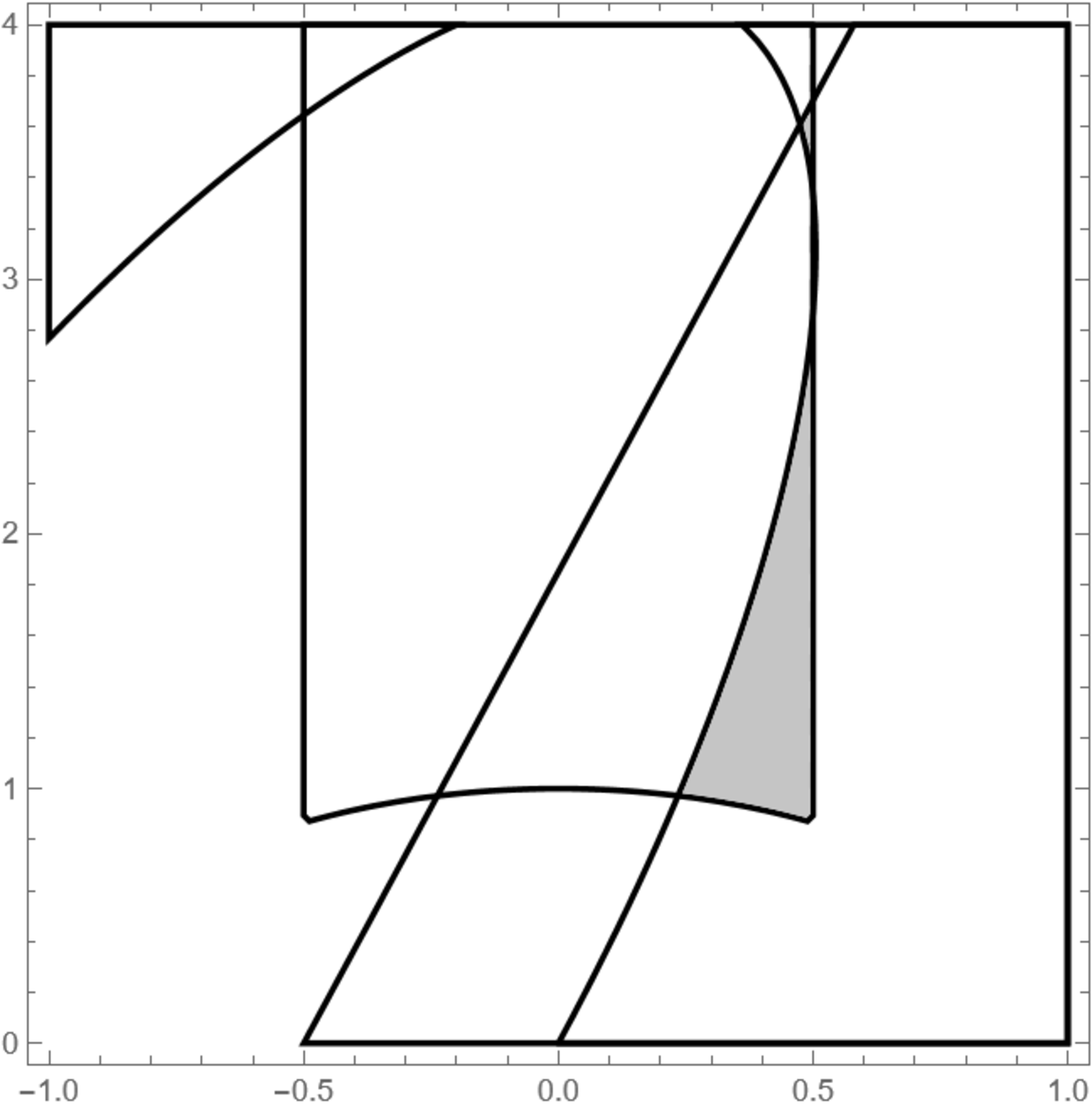}
\includegraphics[scale=0.3]{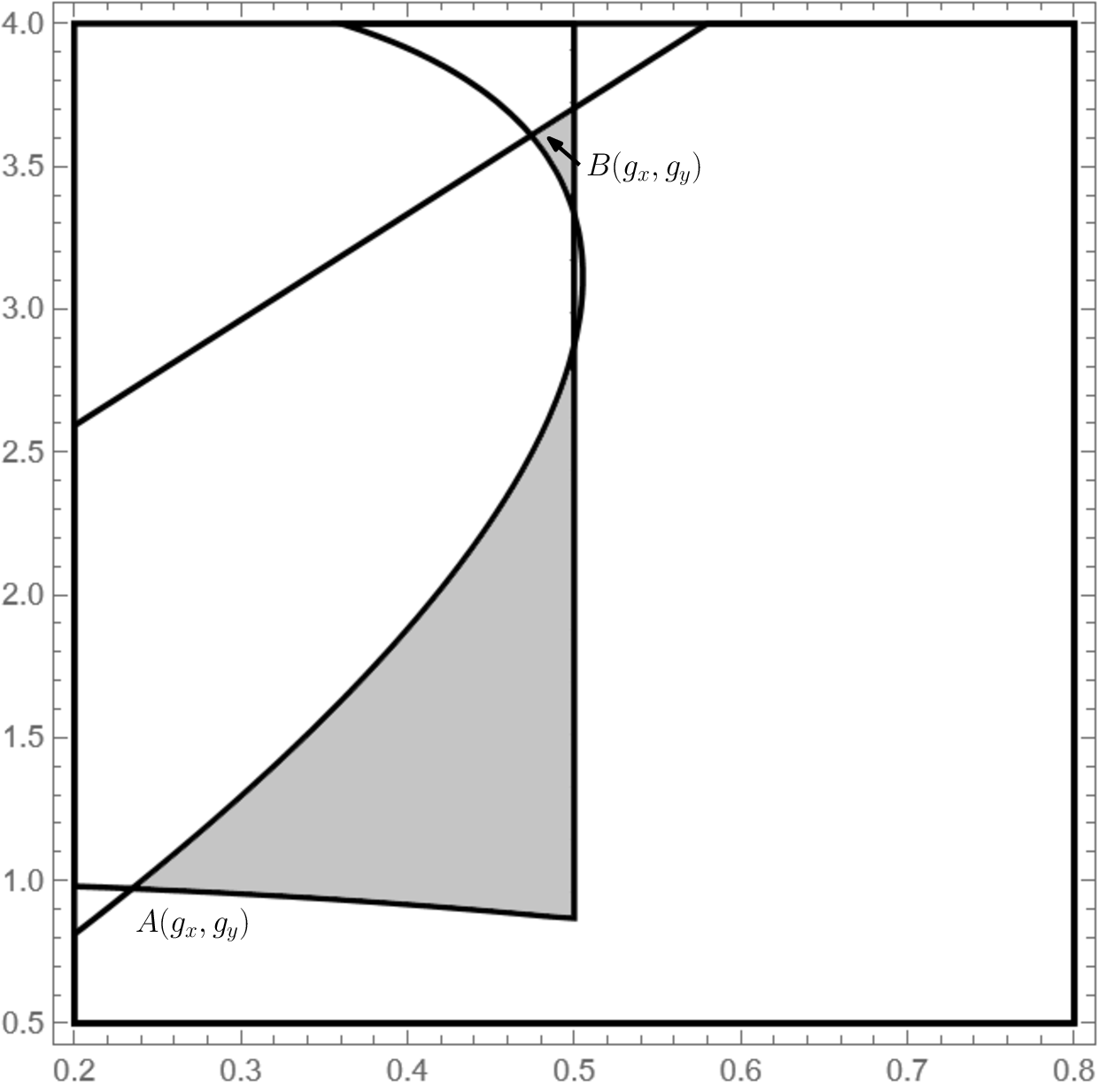}
\centering
\caption{Case 5}\label{fig:case5}
\end{figure}\par

Since the ellipse $E$ is convex, we have 
\[
E_{y,lower}(x,g_x,g_y)\leq \frac{B_y(g_x,g_y)}{L_x(B_y(g_x,g_y),g_x,g_y)}x=-\frac{\sqrt{3}}{\sqrt{3}g_x+g_y}x.
\]
Therefore, the solution of the equation $-\frac{\sqrt{3}}{\sqrt{3}g_x+g_y}x=\sqrt{1-x^2}$ gives a lower bound of $A_x(g_x,g_y)$,
\[
A_x(g_x,g_y)\geq  \frac{1}{\sqrt{\frac{3}{(\sqrt{3}g_x+g_y)^2}+1}}.
\]
Since $\sqrt{3}g_x+g_y\in (-2g_y+g_y,-\frac{\sqrt{15}}{2}g_y+g_y)$, we have 
\[
\frac{ (\sqrt{3}g_x+g_y)^2}{3}\geq \frac{(1-\frac{\sqrt{15}}{2})^2}{3}g_y^2>\frac{1}{4}g_y^2,
\]
and hence
\[
A_x(g_x,g_y)\geq \frac{1}{\sqrt{\frac{4}{g_y^2}+1}}\geq \frac{g_y}{4}.
\]\par
The lower bound of $B_x(g_x,g_y)$ can be estimated by
\[
\begin{split}
B_x(g_x,g_y)=&-\frac{\sqrt{3}g_x+g_y}{2g_y}\geq \frac{1}{4}(\sqrt{15}-2).
\end{split}
\]

For $g_x\in (b_6(g_y),b_5(g_y))$, $\widehat{m}$ is
\[
\begin{split}
\widehat{m}(g_x,g_y)=&\frac{3}{\pi}\int_{A_x(g_x,g_y)}^{\frac{1}{2}}\int_{\sqrt{1-x^2}}^{E_{y,lower}(x,g_x,g_y)}\frac{1}{y^2}\dd y\dd x+\frac{3}{\pi}\int_{B_x(g_x,g_y)}^{\frac{1}{2}}\int_{E_{y,upper}(x,g_x,g_y)}^{L_y(y,g_x,g_y)}\frac{1}{y^2}\dd y\dd x\\
=&-\frac{3}{\pi}\int_{A_x(g_x,g_y)}^{\frac{1}{2}}\frac{1}{E_{y,lower}(x,g_x,g_y)}-\frac{1}{\sqrt{1-x^2}}\dd x-\frac{3}{\pi}\int_{B_x(g_x,g_y)}^{\frac{1}{2}}\frac{1}{L_y(x,g_x,g_y)}-\frac{1}{E_{y,upper}(x,g_x,g_y)}\dd x\\
=&\frac{3}{\pi}\int_{A_x(g_x,g_y)}^{\frac{1}{2}}F_{5,1}(x,g_x,g_y)\dd x+\frac{3}{\pi}\int_{B_x(g_x,g_y)}^{\frac{1}{2}}F_{5,2}(x,g_x,g_y)\dd x.
\end{split}
\]

It is easy to compute that
\[
\begin{cases}
\int \pdv{g_x}F_{5,1}(x,g_x,g_y)\dd x=\pdv{g_x}\int F_{5,1}(x,g_x,g_y)\dd x+C,\\
\int \pdv{g_y}F_{5,1}(x,g_x,g_y)\dd x=\pdv{g_y}\int F_{5,1}(x,g_x,g_y)\dd x+C,\\
\int \pdv{g_x}F_{5,2}(x,g_x,g_y)\dd x=\pdv{g_x}\int F_{5,2}(x,g_x,g_y)\dd x+C,\\
\int \pdv{g_y}F_{5,2}(x,g_x,g_y)\dd x=\pdv{g_y}\int F_{5,2}(x,g_x,g_y)\dd x+C.\\
\end{cases}
\]

Since
\[
F_{5,1}(A_x(g_x,g_y))=0,\quad F_{5,2}(B_x(g_x,g_y))=0,
\]
we have
\[
\pdv{g_x}\widehat{m}(g_x, g_y)=\frac{3}{\pi}\int_{A_x(g_x,g_y)}^{\frac{1}{2}}\pdv{g_x}F_{5,1}(x,g_x,g_y)\dd x+\frac{3}{\pi}\int_{B_x(g_x,g_y)}^{\frac{1}{2}}\pdv{g_x}F_{5,2}(x,g_x,g_y)\dd x,\\
\]
\[
\pdv{g_y}\widehat{m}(g_x, g_y)=\frac{3}{\pi}\int_{A_x(g_x,g_y)}^{\frac{1}{2}}\pdv{g_y}F_{5,1}(x,g_x,g_y)\dd x+\frac{3}{\pi}\int_{B_x(g_x,g_y)}^{\frac{1}{2}}\pdv{g_y}F_{5,2}(x,g_x,g_y)\dd x.\\
\]
We have 
\[
\pdv{g_x}F_{5,1}(x,g_x, g_y)=\frac{-\frac{g_x}{\sqrt{g_x^2-g_y^2 x (x+2)}}+x+1}{x (x+2)}=\frac{\frac{1}{\sqrt{1-b(g_x,g_y) x (x+2)}}+x+1}{x (x+2)}>0.
\]
Since $b(g_x,g_y)=\frac{g_y^2}{g_x^2}\in (\frac{3}{4},\frac{4}{5})\subset (0,\frac{4}{5})$, we have $ 1+x-\sqrt{1-bx(x+2)}>0$ and hence
\[
\begin{split}
\int_{A_x(g_x,g_y)}^{\frac{1}{2}}\pdv{g_x}F_{5,1}(x,g_x,g_y)\dd x\leq&\int_{A_x(g_x,g_y)}^{\frac{1}{2}}\frac{\frac{1}{\sqrt{1-b(g_x,g_y) x (x+2)}}+x+1}{x (x+2)}\dd x\\
=&\eval{\log(1+x-\sqrt{1-b(g_x,g_y)x(x+2)})}_{A_x(g_x,g_y)}^{\frac{1}{2}}.
\end{split}
\]\par
Let $f(b,x)=1+x-\sqrt{1-bx(x+2)}$. For $b>0$ and $x\in (0,\frac{1}{2})$, we have 
\[
{\pdv{f}{b}}(b,x)=\frac{x (x+2)}{2 \sqrt{1-b x (x+2)}}>0,\quad {\pdv{f}{x}}(b,x)=\frac{b(x+1)}{\sqrt{1-b x (x+2)}}+1>0.
\]
So we have
\[
\begin{split}
\eval{\log(f(b(g_x,g_y),x))}_{A_x(g_x,g_y)}^{\frac{1}{2}}\leq &\log(f(\frac{4}{5},\frac{1}{2}))-\log(f(\frac{3}{4},\frac{g_y}{4}))\\
=&\log (12)-\log \left(2 g_y-\sqrt{64-3 g_y (g_y+8)}+8\right)\\
=&\left(\log \left(\frac{24}{7}\right)+\log (\frac{1}{g_y})\right)-\frac{3 g_y}{32}+O\left(g_y^2\right).
\end{split}
\]
Therefore,
\[
\begin{split}
\pdv{g_x}F_{5,2}(x,g_x,g_y)=&\frac{2}{1+2x}-\frac{1+x-\frac{1}{\sqrt{1-b(g_x,g_y) x (x+2)}}}{x (x+2)}\\
\geq& \frac{2}{1+2x}-\frac{1+x-1}{x (x+2)}>0.
\end{split}
\]
Thus, we have 
\[
\begin{split}
0<\int_{B_x(g_x,g_y)}^{\frac{1}{2}}\pdv{g_x}F_{5,2}(x,g_x,g_y)\dd x=& \int_{B_x(g_x,g_y)}^{\frac{1}{2}}\frac{2}{1+2x}-\frac{1+x-\frac{1}{\sqrt{1-b(r,\theta) x (x+2)}}}{x (x+2)}\dd x\\
=&\eval{\left[ \log(f(b(r,\theta),x))+\log(\frac{1+2x}{x(x+2)}) \right]}_{B_x(g_x,g_y)}^{\frac{1}{2}}\\
\leq&\log(f(\frac{4}{5},\frac{1}{2}))-\log(f(\frac{3}{4},\frac{1}{4} \left(\sqrt{15}-2\right)))\\
=&\log \left(\frac{12}{2 \sqrt{15}-\sqrt{55-12 \sqrt{15}}+4}\right).
\end{split}
\]
The last inequality holds since $\log(\frac{1+2x}{x(x+2)})$ decreases in $(0,\frac{1}{2})$. Therefore, 
\begin{equation}\label{ineq: estimation of m hat 5 by gx}
0<\pdv{g_x}\widehat{m}(g_x,g_y)\leq \frac{3}{\pi}\log \left(\frac{288}{7(2 \sqrt{15}-\sqrt{55-12 \sqrt{15}}+4)}\right)+\frac{3}{\pi}\log(\frac{1}{g_y})-\frac{9g_y}{32\pi}+O(g_y^2)
\end{equation}
Also, 
\[
\pdv{g_y}F_{5,2}(x,g_x,g_y)=\pdv{g_y}F_{5,1}(x,g_x,g_y)=\frac{1}{\sqrt{(\frac{g_x}{g_y})^2-x(x+2)}}=\pdv{g_y}F_{2}(x,g_x,g_y).
\]
Therefore, using estimation \eqref{ineq: estimation of m hat 2 by gy}, we have 
\[
0<\int_{A_x(g_x,g_y)}^{\frac{1}{2}} \pdv{g_y}F_{5,1}(x,g_x,g_y)\dd x\leq \pi,
\]
and
\[
0<\int_{B_x(g_x,g_y)}^{\frac{1}{2}} \pdv{g_y}F_{5,2}(x,g_x,g_y)\dd x\leq \pi.
\]
Therefore, we have
\begin{equation}\label{ineq: estimation of m hat 5 by gy}
0<\pdv{g_y}\widehat{m}(g_x, g_y)\leq 6.
\end{equation}

\noindent\textbf{Case 6: $g_x\in (b_7(g_y),b_6(g_y))$}\par
In this case the overlap region is depicted in Figure \ref{fig:case6}.
\begin{figure}[h]
\includegraphics[scale=0.3]{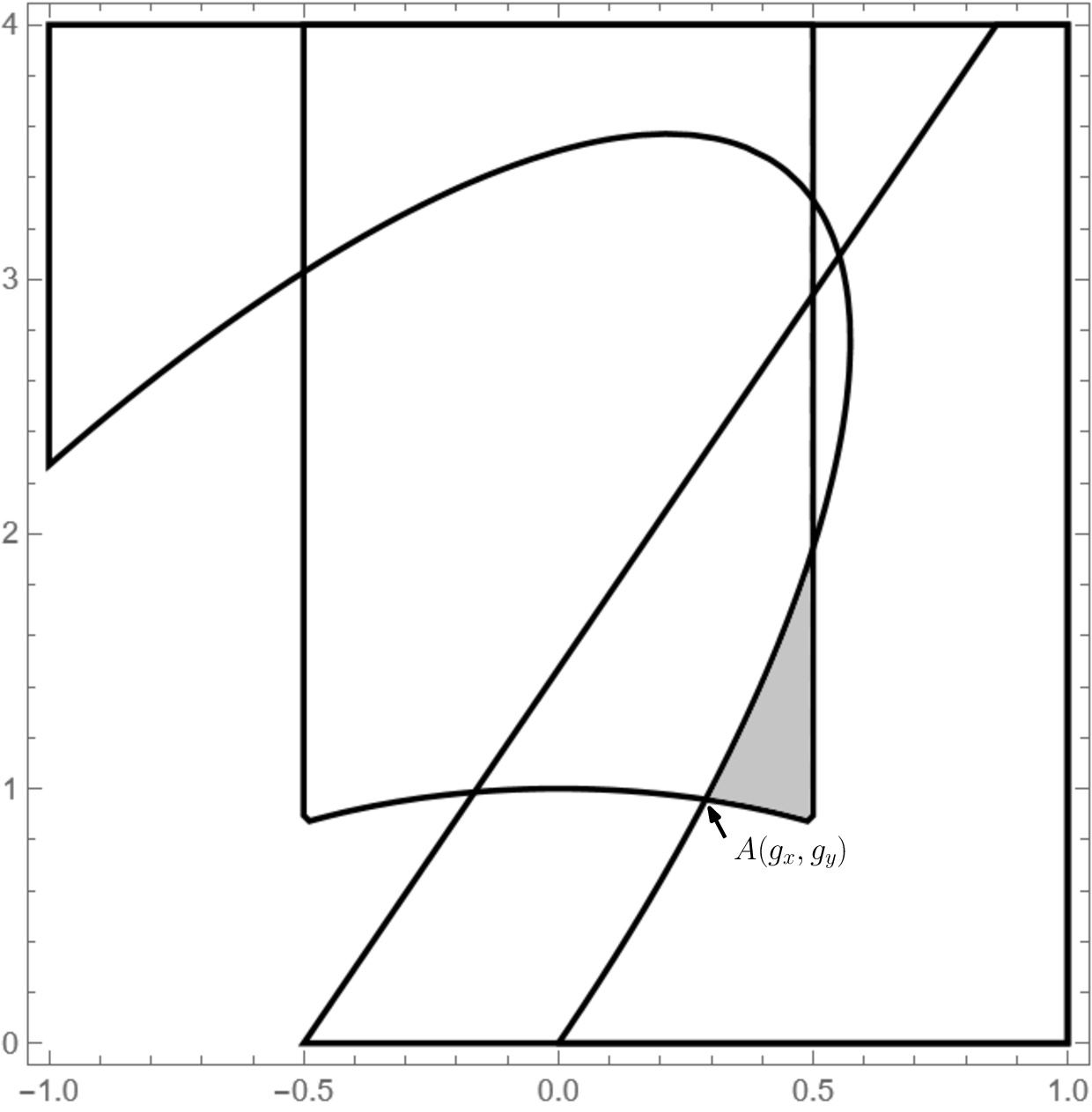}
\centering
\caption{Case 6}\label{fig:case6}
\end{figure}\par 
For $g_x\in (b_7(g_y),b_6(g_y))$, 
\[
\begin{split}
\widehat{m}(g_x,g_y)=\frac{3}{\pi}\int_{A_x(g_x,g_y)}^{\frac{1}{2}}F_{5,1}(x,g_x,g_y)\dd x.
\end{split}
\]
We have 
\[
\pdv{g_x}\widehat{m}(g_x, g_y)=\int_{A_x(g_x,g_y)}^{\frac{1}{2}}\pdv{g_x}F_{5,1}(x,g_x,g_y)\dd x,
\]
\[
\pdv{g_y}\widehat{m}(g_x, g_y)=\int_{A_x(g_x,g_y)}^{\frac{1}{2}}\pdv{g_y}F_{5,1}(x,g_x,g_y)\dd x.
\]

Since $g_x\in (b_7(g_y),b_6(g_y))\subset  (b_7(g_y),b_5(g_y))$, from Case 5, we know $A_x(g_x,g_y)>  \frac{g_y}{4}$ and $b(g_x,g_y)=\frac{g_y^2}{g_x^2}\in (0,\frac{4}{5})$. Therefore, we have  
\begin{equation}\label{ineq: estimation of m hat 6 by gx}
\begin{split}
0<\pdv{g_x}\widehat{m}(g_x,g_y)\leq &\frac{3}{\pi} \int_{A_x(g_x,g_y)}^{\frac{1}{2}}\pdv{g_x}F_{5,1}(x,g_x,g_y)\dd x\\
\leq &\frac{3}{\pi}(\log(f(\frac{4}{5},\frac{1}{2}))-\log(f(0,\frac{g_y}{4})))=\frac{3}{\pi}\log (6)+\frac{3}{\pi}\log (\frac{1}{g_y}).
\end{split}
\end{equation}

Also, like in Case 5, we have
\begin{equation}\label{ineq: estimation of m hat 6 by gy}
0<\pdv{g_y}\widehat{m}(g_x, g_y)=\frac{3}{\pi}\int_{B_x(g_x,g_y)}^{\frac{1}{2}} \pdv{g_y}F_{5,1}(x,g_x,g_y)\dd x\leq 3.
\end{equation}

\noindent\textbf{Case 7: $g_x\in (-\infty,b_7(g_y))$}\par
In this case the overlap region is depicted in Figure \ref{fig:case7}.
\begin{figure}[h]
\includegraphics[scale=0.3]{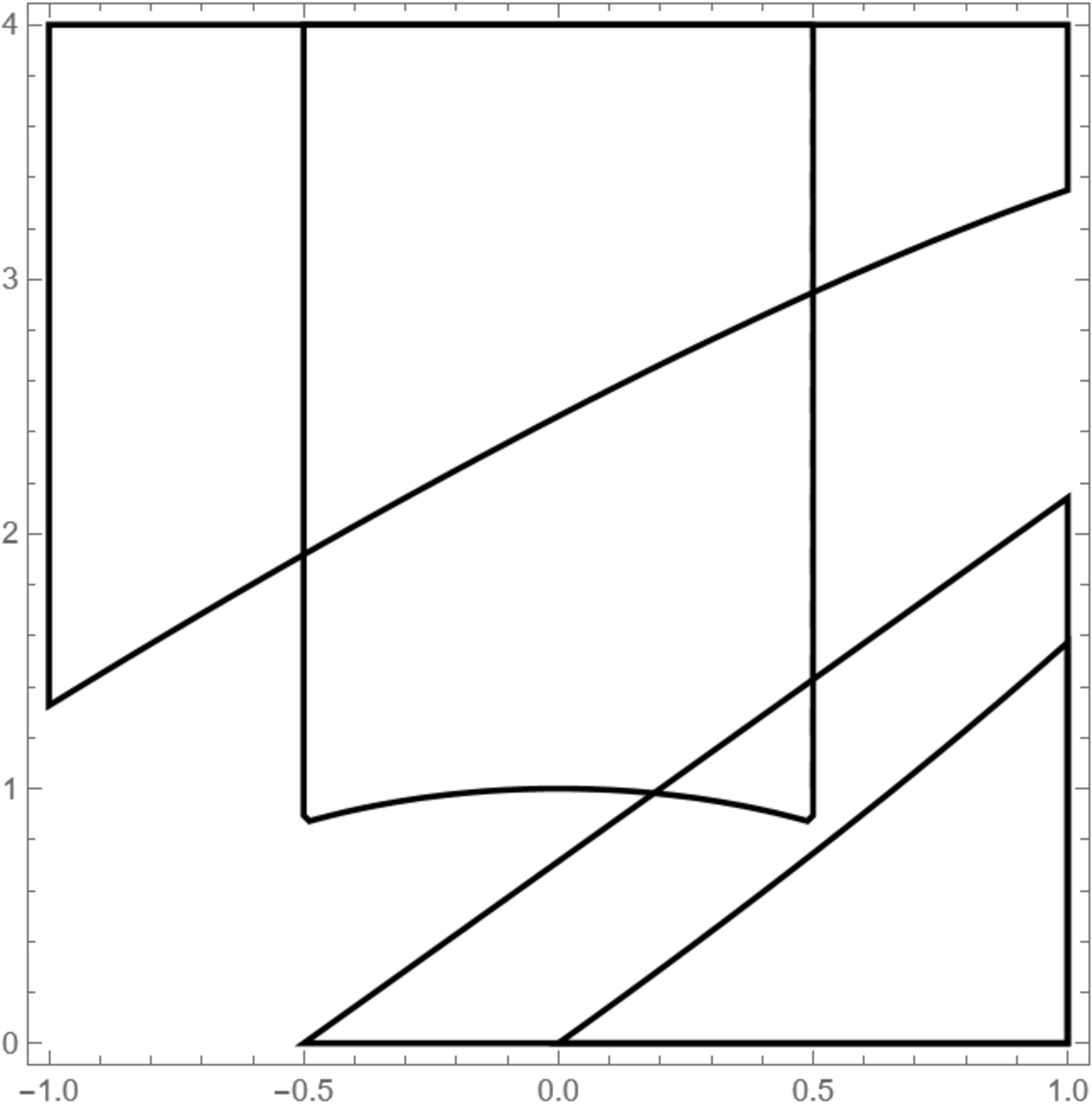}
\centering
\caption{Case 7}\label{fig:case7}
\end{figure}\par
We have $\widehat{m}(g_x,g_y)=0$ and thus $\pdv{g_x}\widehat{m}(g_x, g_y)=\pdv{g_y}\widehat{m}(g_x, g_y)=0$.\par

\noindent\textbf{Case 8: $g_x\in (-\frac{2}{\sqrt{3}},0)$}\par
In this case the overlap region is depicted in Figure \ref{fig:case8}.
\begin{figure}[h]
\includegraphics[scale=0.3]{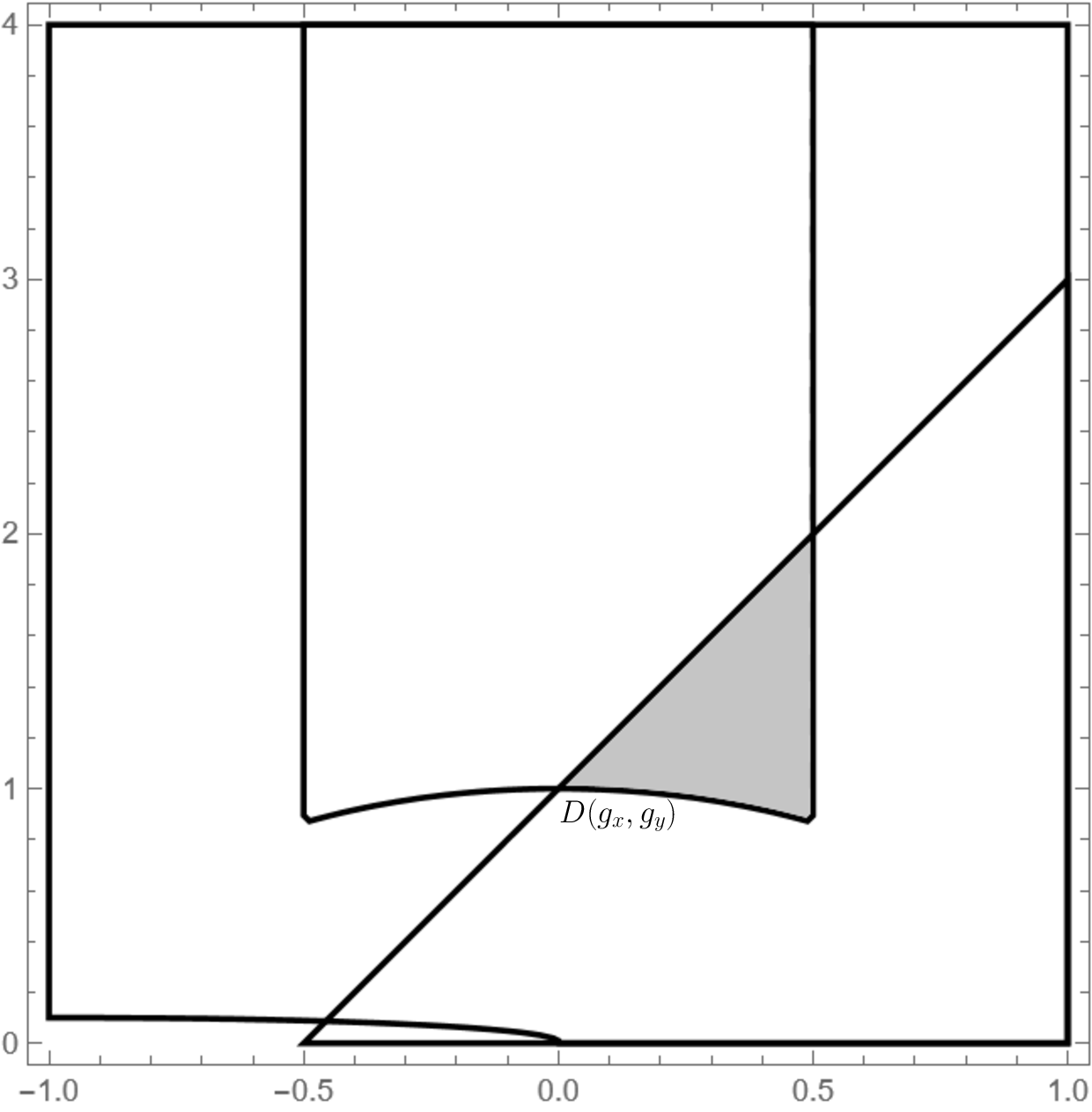}
\centering
\caption{Case 8}\label{fig:case8}
\end{figure}
\par

Let $D_x(g_x,g_y)$ be the $x$-coordinate of the intersection point of the line $L$ and the circle $x^2+y^2=1$, i.e., 
\[
L_y(D_x(g_x,g_y),g_x,g_y)=\sqrt{1-D_x(g_x,g_y)^2}.
\]
This yields
\[
D_x(g_x,g_y)=-\frac{g_x \sqrt{4 g_x^2+3}+1}{2 g_x^2+2}.
\]
For $g_x\in (-\frac{2}{\sqrt{3}},0)$,
\[
\begin{split}
\widehat{m}(g_x,g_y)=&\frac{3}{\pi}\int_{D_x(g_x,g_y)}^{\frac{1}{2}}\int_{\sqrt{1-x^2}}^{L_y(x,g_x,g_y)}\frac{1}{y^2}\dd y\dd x=-\frac{3}{\pi}\int_{D_x(g_x,g_y)}^{\frac{1}{2}}\frac{1}{L_y(x,g_x,g_y)}-\frac{1}{\sqrt{1-x^2}}\dd x\\
=&\frac{3}{\pi}\int_{D_x(g_x,g_y)}^{\frac{1}{2}}F_8(x,g_x,g_y)\dd x.
\end{split}
\]

Since $F_8(D_x(g_x,g_y),g_x,g_y)=0$, we have
\[
\pdv{g_x}\widehat{m}(g_x, g_y)=\pdv{g_x}\int_{D_x(g_x,g_y)}^{\frac{1}{2}}F_8(x,g_x,g_y)\dd x=\eval{\log(1+2x)}_{D_x(g_x,g_y)}^{\frac{1}{2}}=\log \left(\frac{2 \left(g_x^2+1\right)}{g_x \left(g_x-\sqrt{4 g_x^2+3}\right)}\right),
\]
\[
\pdv{g_y}\widehat{m}(g_x, g_y)=\pdv{g_y}\int_{D_x(g_x,g_y)}^{\frac{1}{2}}F_8(x,g_x,g_y)\dd x=0.
\]
For $g_x\in (-\frac{2}{\sqrt{3}},0)$, we have $-\frac{2}{\sqrt{3}}<\frac{2 \left(g_x^2+1\right)}{g_x-\sqrt{4 g_x^2+3}}<-1$. So we have
\begin{equation}\label{ineq: estimation of m hat 8 by gx}
\pdv{g_x}\widehat{m}(g_x, g_y)\leq\log(-\frac{2}{\sqrt{3}g_x}).
\end{equation}
Combining all estimations of $\pdv{g_x}\widehat{m}(g_x, g_y)$ and $\pdv{g_x}\widehat{m}(g_x, g_y)$ in Case 1 to Case 8, the proof of Lemma \ref{Lem: dereasing ratio of derivative of m hat} is finished.
\refstepcounter{subsection}
\subsection*{\thesubsection\quad Proof of Theorem \ref{thm: decay of Hilbert transform on SL2(R)}}
\phantomsection
\label{Sec: Proof of the main theorem}

First we prove the integrability of $\dv{t}\widetilde{\widetilde{m}}(k_0a\exp(tX_j))$, i.e., the integrability of $2e^{2t}g_y\pdv{g_y}\widehat{m}(g_x,e^{2t}g_y)$ and $g_y\pdv{g_x}\widehat{m}(g_x+tg_y,g_y)$ for small $t$. Since $g_y(r,\theta)\leq \max(\frac{1}{r^2},r^2)$, by Lemma \ref{Lem: dereasing ratio of derivative of m hat}, we only need to prove the integrability of $\log(\frac{1}{\abs{g_x(r,\theta)+t g_y(r,\theta)}})$ and $\log(\frac{1}{\abs{g_x(r,\theta)}})$.\par
We have 
\[
\abs{g_x(r,\theta)+tg_y(r,\theta)}=\frac{\abs{\left(r^4-1\right) \sin \theta \cos\theta+r^2 t}}{r^4 \sin ^2\theta+\cos ^2\theta}\geq \frac{1-r^4}{2}\abs*{\sin (2\theta)-\frac{2r^2 t}{1-r^4}}
\]
for $r$ close to $0$. Let $\sin(2\omega)=\frac{2r^2 t}{1-r^4}$, we have 
\[
\begin{split}
&\int_{-\frac{\pi}{2}}^{\frac{\pi}{2}}\log(\frac{1}{\abs{g_x(r,\theta)+t g_y(r,\theta)}})\dd\theta\\
\leq& \pi\log(\frac{2}{1-r^4})-\int_{-\frac{\pi}{2}}^{\frac{\pi}{2}}\log(\abs{\sin(2\theta)-\sin(2\omega)})\dd\theta\\
=&\pi\log(\frac{2}{1-r^4})-\log(2)-\int_{-\frac{\pi}{2}}^{\frac{\pi}{2}}\log(\abs{\cos(\theta+\omega)})\dd\theta-\int_{-\frac{\pi}{2}}^{\frac{\pi}{2}}\log(\abs{\sin(\theta-\omega)})\dd\theta\\
=&\pi\log(\frac{2}{1-r^4})-\log(2)-\int_{-\frac{\pi}{2}}^{\frac{\pi}{2}}\log(\abs{\cos\theta})\dd\theta-\int_{-\frac{\pi}{2}}^{\frac{\pi}{2}}\log(\abs{\sin\theta})\dd\theta\\
=&\pi\log(\frac{2}{1-r^4})-\log(2)+2\pi\log(2).\\
\end{split} 
\]
Let $t=0$, we have $\omega=0$ and hence
\[
\int_{-\frac{\pi}{2}}^{\frac{\pi}{2}}\log(\frac{1}{\abs{g_x(r,\theta)}})\dd\theta\leq\pi\log(\frac{2}{1-r^4})-\log(2)+2\pi\log(2).
\]
Therefore, by dominated convergence theorem, we have 
\begin{equation}
\begin{cases}
\partial_{X_1} \widetilde{m}(a)=\frac{1}{\pi}\int_{-\frac{\pi}{2}}^{\frac{\pi}{2}} 2g_y(r,\theta)\pdv{g_y}\widehat{m}(g_x(r,\theta),g_y(r,\theta))\dd \theta,\\
\partial_{X_2} \widetilde{m}(a)=\frac{1}{\pi}\int_{-\frac{\pi}{2}}^{\frac{\pi}{2}} g_y(r,\theta)\pdv{g_x}\widehat{m}(g_x(r,\theta),g_y(r,\theta))\dd \theta.
\end{cases}
\end{equation}
The ratio $\frac{g_x(r,\theta)}{g_y(r,\theta)}=\frac{(r^4-1)\cos\theta\sin\theta}{r^2}$ indicates that as $\theta$ varies from $-\frac{\pi}{2}$ to $\frac{\pi}{2}$, the ellipse $E$ and line $L$ first rotate counter-clockwise, then clockwise, and finally counter-clockwise again when $r$ is close to $0$. The transformation $(r,\theta) \mapsto (g_x(r,\theta),g_y(r,\theta))$ is invertible for $\theta \in \left(-\frac{\pi}{4}, \frac{\pi}{4}\right)$. Figure \ref{fig: rotate of theta} illustrates these changes as $\theta$ increases.
\begin{figure}[h]
\centering
\includegraphics[scale=0.4]{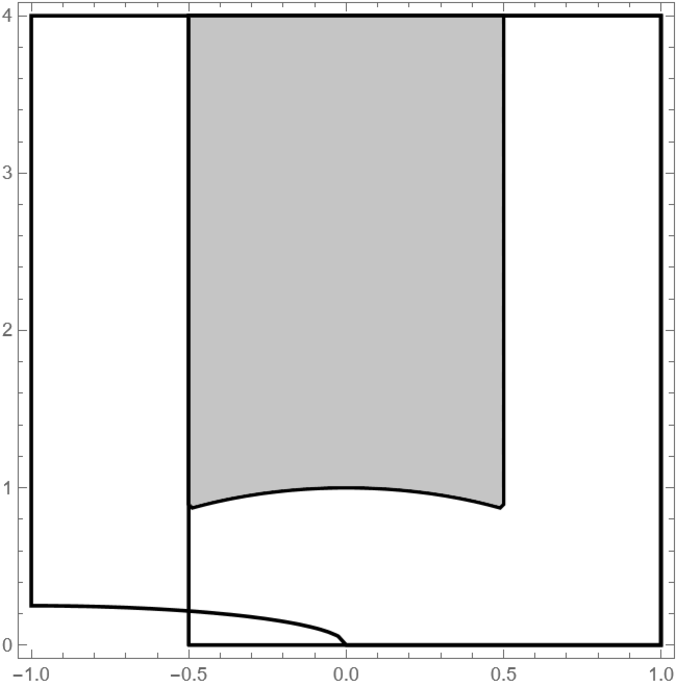}
\includegraphics[scale=0.4]{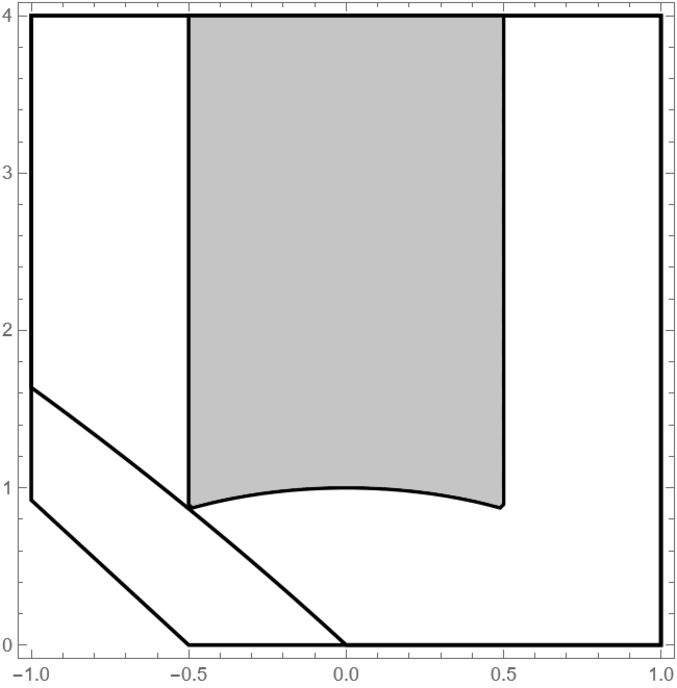}
\includegraphics[scale=0.4]{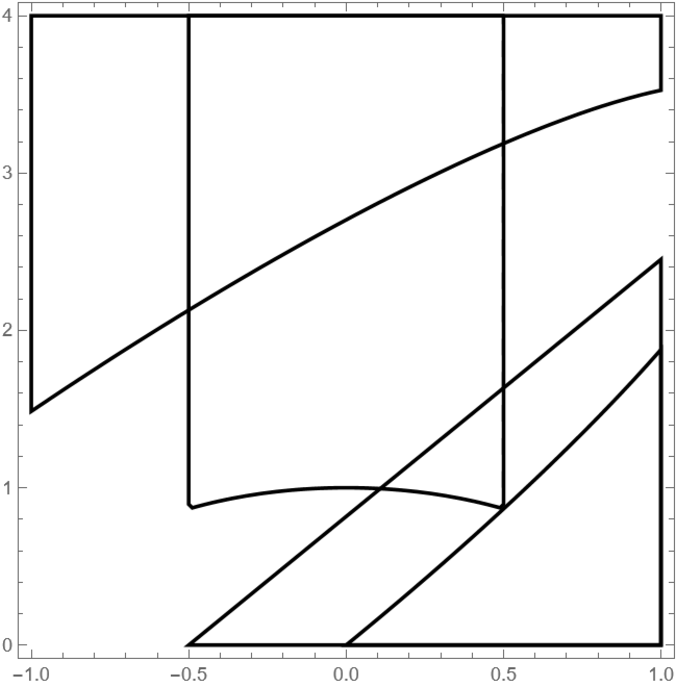}
\includegraphics[scale=0.4]{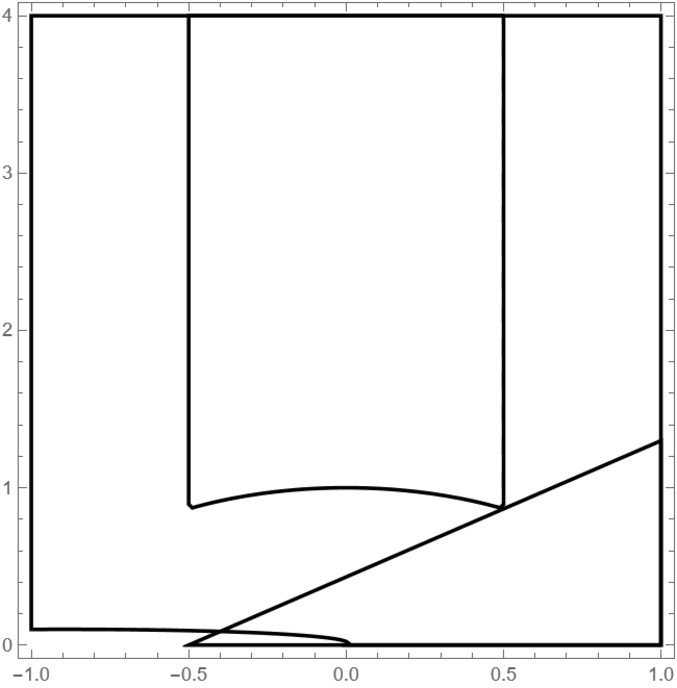}
\includegraphics[scale=0.4]{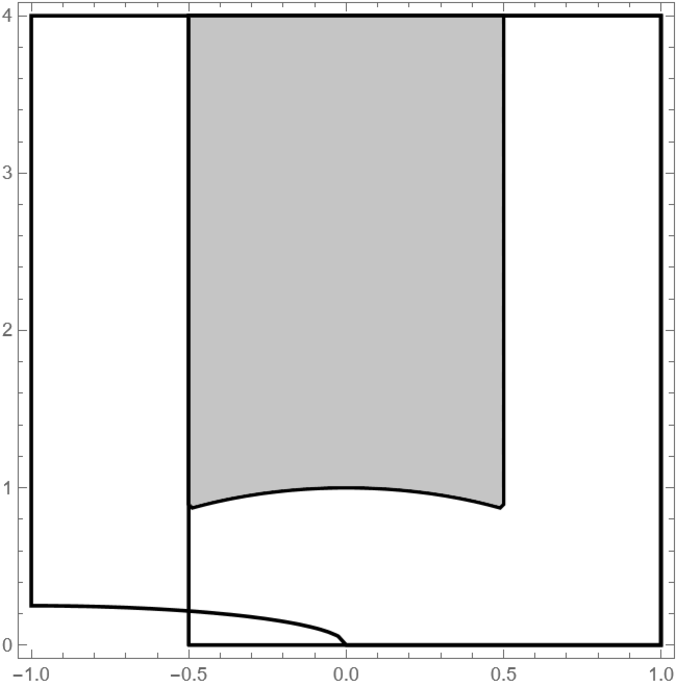}
\caption{The illustration of $A(g_x(r,\theta),g_y(r,\theta))\cap \mathcal{F}$ as $\theta$ increases.}\label{fig: rotate of theta}
\end{figure}\par

Solving the boundaries $g_x(r,\theta)=b_2(g_y(r,\theta))$, $g_x(r,\theta)=b_7(g_y(r,\theta))$ and $g_x(r,\theta)=b_8(g_y(r,\theta))$ for $r$ close to $0$, we obtain the corresponding boundaries 
\[
\begin{cases}
\theta_2(r)=-\frac{\pi}{6},\\
\theta_7(r)=\cos^{-1}\left(\sqrt{\frac{16r^4-3\sqrt{9r^8-66r^4+9}-12}{28(r^4-1)}}\right),\\
\theta_8(r)=\tan^{-1}\left(\frac{4\sqrt{3}}{-3r^4+\sqrt{9r^8-66r^4+9}+3}\right),
\end{cases}
\]
and 
\[
\theta_7(0)=\frac{\pi}{6}, \quad \theta_8(0)=\frac{\pi}{2}.
\]

For $\theta\in (-\frac{\pi}{4},\frac{\pi}{4})$ and $r$ close to $0$, we have estimations:
\[
\abs{g_x(r,\theta)}=\frac{\abs{(r^4-1)\cos\theta \sin\theta}}{\cos^2\theta +r^4\sin^2\theta}\in (\frac{\abs{\sin(2\theta)}}{4},\abs{\sin(2\theta)}),
\]
\[
g_y(r,\theta)=\frac{r^2}{\cos^2\theta+r^4\sin^2\theta}\in (r^2,2r^2).
\]

For $\theta\in (\theta_8(r),\frac{\pi}{2})$ and $r$ close to $0$, we have 
\[
\begin{split}
g_x(r,\theta)=&\frac{\left(r^4-1\right) \sin\theta \cos\theta}{r^4 \sin ^2\theta+\cos ^2\theta}=\frac{\left(r^4-1\right)}{r^4 \tan\theta+\cot\theta}\in\left( -\frac{1}{r^4\tan\theta},-\frac{1}{4r^4\tan\theta} \right),
\end{split}
\]
by the estimation $r^4\tan\theta \geq r^4 \tan(\theta_8(r))\to \frac{\sqrt{3}}{2}$ as $r\to 0$. And
\[
g_y(r,\theta)=\frac{r^2}{\cos^2\theta+r^4\sin^2\theta}\in \left( \frac{1}{2r^2},\frac{2}{r^2} \right),
\]
by estimations $\sin\theta\geq \sin(\theta_8(r))\to 1$ as $r\to 0$ and $\cos^2\theta\leq \cos^2(\theta_8(r))=O(r^8)$.\par
By the above estimations of $g_x(r,\theta)$ and $g_y(r,\theta)$ and Lemma \ref{Lem: dereasing ratio of derivative of m hat}, we have 
\[
\begin{split}
&\int_{-\frac{\pi}{2}}^{\frac{\pi}{2}}g_y(r,\theta)\pdv{g_x}\widehat{m}(g_x(r,\theta),g_y(r,\theta))\dd\theta\\
=&\int_{-\frac{\pi}{2}}^{\theta_7(r)}g_y(r,\theta)\pdv{g_x}\widehat{m}(g_x(r,\theta),g_y(r,\theta))\dd\theta+\int_{\theta_8(r)}^{\frac{\pi}{2}}g_y(r,\theta)\pdv{g_x}\widehat{m}(g_x(r,\theta),g_y(r,\theta))\dd\theta\\
\lesssim & r^2\left( \int_{-\frac{\pi}{2}}^{\theta_7(r)}\log(\frac{1}{\sin(2\theta)})+\log(\frac{1}{r^2})\dd\theta \right)+\frac{1}{r^2}\left(  \int_{\theta_8(r)}^{\frac{\pi}{2}}\log(4r^4\tan\theta)\dd\theta \right)\\
\lesssim & r^2\log(\frac{1}{r^2})=O(r)
\end{split}
\]
and 
\[
\int_{-\frac{\pi}{2}}^{\frac{\pi}{2}}g_y(r,\theta)\pdv{g_y}\widehat{m}(g_x(r,\theta),g_y(r,\theta))\dd\theta=\int_{-\frac{\pi}{2}}^{\theta_7(r)}g_y(r,\theta)\pdv{g_x}\widehat{m}(g_x(r,\theta),g_y(r,\theta))\dd\theta\leq 12\pi r^2.
\]
Let $f_j(r)=\partial_{X_j}\widetilde{m}(a)$. Since $\partial_{X_j}\widetilde{m}(a)$ is left-$K$-invariant, we have $f_j(r)=f_j(\frac{1}{r})$.
By $\norm{kak'}=\norm{a}=\max(r,\frac{1}{r})$, we have 
\[
\limsup_{\norm{kak'}\to \infty}\norm{kak'}\abs{ \partial_{X_j} \widetilde{m}(kak')}\leq \limsup_{\norm{a}\to \infty}\norm{a}(\abs{\partial_{X_1} \widetilde{m}(a)}+\abs{\partial_{X_2} \widetilde{m}(a)})=\limsup_{r\to 0^+}\frac{1}{r}(\abs{f_1(r)}+\abs{f_2(r)}).
\]
Therefore, Theorem \ref{thm: decay of Hilbert transform on SL2(R)} is proved since $f_i(r)=O(r)$ as $r\to 0^+$.

\begin{remark}
By a more careful analysis, $ \int_{-\frac{\pi}{2}}^{\frac{\pi}{2}}g_y(r,\theta)\pdv{g_x}\widehat{m}(g_x(r,\theta),g_y(r,\theta))\dd\theta$ is also in the order of $O(r^2)$.
\end{remark}

\begin{remark}\label{Rmk: non-existence of second order}
Take $X_i=X_j=X_2$. By mean value theorem we have 
\[
\begin{split}
  \partial_{X_2}\partial_{X_2} \widetilde{m}(a)=&\eval{\dv{t}}_{t=0}\partial_{X_2} \widetilde{m}(a\exp(tX_2))\\
  =&\eval{\dv{t}}_{t=0}\eval{\dv{l}}_{l=0}\widetilde{m}(a\exp(tX_2)\exp(lX_2))\\
  =&\frac{1}{\nu(K_+)}\lim_{\delta\to 0}\lim_{\varepsilon\to 0}\int_{K_+}\eval{\dv{t}}_{t=\delta}\eval{\dv{l}}_{l=\varepsilon}\widetilde{\widetilde{m}}(k_0a\exp(tX_2)\exp(lX_2))\dd\nu(k_0)\\
  =&\frac{1}{\pi}\lim_{\delta\to 0}\lim_{\varepsilon\to 0}\int_{-\frac{\pi}{2}}^{\frac{\pi}{2}}g_y(r,\theta)^2\pdv[2]{g_x}\widehat{m}(g_x(r,\theta)+(\delta+\varepsilon)g_y(r,\theta),g_y(r,\theta))\dd\theta.\\
\end{split}
\]
We will show that there is an interval $I\subset (-\frac{\pi}{2},\frac{\pi}{2})$ such that
\[
g_y(r,\theta)^2\pdv[2]{g_x}\widehat{m}(g_x(r,\theta)+(\delta+\varepsilon)g_y(r,\theta),g_y(r,\theta))>0,
\]
and 
\[
\int_I g_y(r,\theta)^2\pdv[2]{g_x}\widehat{m}(g_x(r,\theta),g_y(r,\theta))=\infty.
\]
Therefore, by Fatou's lemma we have 
\[
\lim_{\delta\to 0}\lim_{\varepsilon\to 0}\int_{I}g_y(r,\theta)^2\pdv[2]{g_x}\widehat{m}(g_x(r,\theta)+(\delta+\varepsilon)g_y(r,\theta),g_y(r,\theta))\dd\theta   \geq \int_I g_y(r,\theta)^2\pdv[2]{g_x}\widehat{m}(g_x(r,\theta),g_y(r,\theta))=\infty.
\]
So $\partial_{X_2}\partial_{X_2} \widetilde{m}(a)$ is infinity or does not exist.

We have 
\[
\begin{split}
\pdv[2]{g_x}\widehat{m}(g_x, g_y)=&-\frac{2}{1+2D_x(g_x,g_y)}D_x^{(1,0)}(g_x,g_y)=\frac{\frac{g_x}{\sqrt{4 g_x^2+3}}-1}{g_x^3+g_x}.
\end{split}
\]
For $g_x(r,\theta)\in (-\frac{2}{\sqrt{3}},0)$, we have $4 \left(\sqrt{3}-2\right)\leq\frac{\frac{g_x}{\sqrt{4 g_x^2+3}}-1}{g_x^2+1}\leq -\frac{3}{5}$. So 
\[
\begin{split}
-\frac{3}{5g_x}\leq \pdv[2]{g_x}\widehat{m}(g_x, g_y)\leq \frac{4 \left(\sqrt{3}-2\right)}{g_x}.
\end{split}
\]

Therefore, we have 
\[
\begin{split}
\int_{\theta_8(r)}^{\frac{\pi}{2}}g_y(r,\theta)^2 \pdv[2]{g_x}\widehat{m}(g_x(r,\theta),g_y(r,\theta))\dd \theta\geq &\frac{3}{5}\int_{\theta_8(r)}^{\frac{\pi}{2}}\frac{r^4\tan\theta}{4r^4}\dd\theta\\
=&\infty.
\end{split}
\]
\end{remark}

\end{document}